\theoremstyle{definition} %%% for statements in roman typeface
\newtheorem*{remark}{Remark}
\theoremstyle{plain}  %%% for statements in italic typeface
\newtheorem{theorem}{Theorem}[section]
\newtheorem{proposition}[theorem]{Proposition}
\newtheorem{corollary}[theorem]{Corollary}
\newtheorem{lemma}[theorem]{Lemma}
\newtheorem*{claim}{Claim}
\newtheorem*{thm}{Theorem}
\newtheorem*{prop}{Proposition}
\DeclareMathOperator{\id}{id}
\DeclareMathOperator{\QC}{QC}
\DeclareMathOperator{\QS}{QS}
\DeclareMathOperator{\Mob}{\mbox{\rm{M\"ob}}}
\DeclareMathOperator{\Diff}{Diff}
\DeclareMathOperator{\Homeo}{Homeo}
\DeclareMathOperator{\Bel}{Bel}
\DeclareMathOperator{\D}{\mathbb D}
\DeclareMathOperator{\R}{\mathbb R}
\DeclareMathOperator{\S1}{\mathbb S}
\begin{document}

\title[Barycentric extension of circle diffeomorphisms]
{Continuity of the barycentric extension of circle diffeomorphisms of H\"older continuous derivatives}

\author{Katsuhiko Matsuzaki}
\address{Department of Mathematics, School of Education, Waseda University,\endgraf
Shinjuku, Tokyo 169-8050, Japan}
\email{matsuzak@waseda.jp}

\makeatletter
\@namedef{subjclassname@2010}{%
\textup{2010} Mathematics Subject Classification}
\makeatother

\subjclass[2010]{Primary 30C62, Secondary 30F60, 37E30}
\keywords{quasiconformal map, complex dilatation, Beltrami coefficients}
\thanks{This work was supported by JSPS KAKENHI 25287021.}

\begin{abstract}
The barycentric extension due to Douady and Earle gives a conformally natural 
extension of a quasisymmetric automorphism of the circle
to a quasiconformal automorphism of the unit disk. We consider such extensions for
circle diffeomorphisms of H\"older continuous derivatives and show that this operation is continuous with respect to
an appropriate topology for the space of the corresponding Beltrami coefficients.
\end{abstract}

\maketitle

\section{Introduction}\label{1}

The barycentric extension due to Douady and Earle \cite{DE} gives a natural extension of a self-homeomorphism of
the unit circle $\S1$ to a self-homeomorphism of the unit disk $\D$. It plays an important role applied to
quasisymmetric homeomorphisms of $\S1$ in the complex analytic theory of Teich\-m\"ul\-ler spaces.
In this paper, we apply the barycentric extension to diffeomorphisms of $\S1$ with H\"older continuous derivatives
and obtain an analogous result for the Teich\-m\"ul\-ler space of such circle diffeomorphisms with the universal Teich\-m\"ul\-ler space.

The universal Teich\-m\"ul\-ler space $T$ can be defined as the space $\QS_*(\S1)$ of all normalized quasisymmetric homeomorphisms of $\S1$.
In this setting, the Teich\-m\"ul\-ler projection $q$ is regarded as the boundary extension map on the space $\QC_*(\D)$ of
all normalized quasiconformal homeomorphisms of $\D$. By the measurable Riemann mapping theorem, we can identify the latter space with the space of Beltrami coefficients $\Bel(\D)=L^\infty(\D)_1$, 
which is the open unit ball of measurable functions on $\D$
with the supremum norm. Then $q:\Bel(\D) \to T$ is continuous with respect to the topology on $\QS_*(\S1)$ induced by the
quasisymmetry constant. The barycentric extension yields a continuous section $e:T \to \Bel(\D)$ for $q$.

The Teich\-m\"ul\-ler space $T^\alpha_0$ of circle diffeomorphisms with $\alpha$-H\"older continuous derivatives for $\alpha \in (0,1)$
is similarly defined as a subspace of $T$; the subgroup $\Diff^{1+\alpha}_*(\S1) \subset \QS_*(\S1)$
of all such diffeo\-morphisms with normalization can be defined to be $T^\alpha_0$.
The topology on this group is induced by the $C^{1+\alpha}$-distance from the identity map.
On the other hand, the corresponding subspace of Beltrami coefficients is $\Bel^\alpha_0(\D) \subset \Bel(\D)$, which consists of
all $\mu \in \Bel(\D)$ with finite weighted supremum norm
$$
\Vert \mu \Vert_{\infty,\alpha}={\rm ess.}\sup_{\zeta \in \D}\, \left(\frac{2}{1-|\zeta|^2}\right)^\alpha\,|\mu(\zeta)|. 
$$
Then we have proved in \cite{Mat} that the restriction of the Teich\-m\"ul\-ler projection to $\Bel^\alpha_0(\D)$ gives
a continuous map $q:\Bel^\alpha_0(\D) \to T^\alpha_0$. In fact, the topology of $T^\alpha_0$ coincides with
the quotient topology induced from $\Bel^\alpha_0(\D)$ by $q$. Moreover, a complex Banach manifold structure has been
provided for $T^\alpha_0$ through the Bers embedding. See survey articles \cite{Mat0} for the introduction of
the Teich\-m\"ul\-ler space $T^\alpha_0$ and \cite{Mat1} for applications of $T^\alpha_0$ to problems on circle diffeomorphism
groups.

The main theorem of this paper asserts the continuity of the section $e$ restricted to $T_0^\alpha$.

\begin{theorem}\label{main1}
The barycentric extension of circle diffeomorphisms with $\alpha$-H\"older continuous derivatives gives a {continuous} section
$$
e:T_0^\alpha=\Diff^{1+\alpha}_*(\S1) \to \Bel_0^{\alpha}(\D)
$$
for the Teich\-m\"ul\-ler projection $q$.
\end{theorem}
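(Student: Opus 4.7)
The plan is to prove the quantitative Lipschitz estimate
\[
|\mu_{e(h_1)}(z)-\mu_{e(h_2)}(z)|\le C\,(1-|z|^2)^\alpha\,\|h_1-h_2\|_{C^{1+\alpha}(\S1)}
\]
uniformly for $h_1,h_2$ in a bounded $C^{1+\alpha}$-neighborhood $U\subset T_0^\alpha$, with $C=C(U)$; taking the supremum over $z\in\D$ then yields the local Lipschitz continuity, hence the continuity, of $e:T_0^\alpha\to\Bel_0^\alpha(\D)$ in the norm $\|\cdot\|_{\infty,\alpha}$. The starting point is the implicit equation defining $w=E(h)(z)$,
\[
\int_{\S1}\frac{h(\zeta)-w}{1-\bar w\,h(\zeta)}\,P(z,\zeta)\,|d\zeta|=0,\qquad P(z,\zeta)=\frac{1-|z|^2}{|z-\zeta|^2},
\]
whose implicit differentiation yields an explicit rational expression for the Beltrami coefficient $\mu_{E(h)}(z)=\partial_{\bar z}E(h)(z)/\partial_z E(h)(z)$ as a ratio of integrals of $h$ against $P$ and its $z$-derivatives.

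Next I would reduce an arbitrary $z\in\D$ to the origin via the conformal naturality of the DE extension. Pick M\"obius maps $\gamma_z,\delta_h\in\Mob(\D)$ with $\gamma_z(0)=z$ and $\delta_h(0)=E(h)(z)$, and set $\tilde h:=\delta_h^{-1}\circ h\circ\gamma_z|_{\S1}$. Then $E(\tilde h)(0)=0$, $|\mu_{E(h)}(z)|=|\mu_{E(\tilde h)}(0)|$, and evaluating the implicit derivatives at $(0,0)$ produces a compact formula for $\mu_{E(\tilde h)}(0)$ as a ratio of linear functionals of $\tilde h$; the denominator was shown in \cite{Mat} to be bounded away from zero uniformly on bounded $C^{1+\alpha}$-neighborhoods. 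The decisive structural feature is that this functional annihilates every rotation, reflecting the fact that $\mu_{E(r)}\equiv 0$ for $r\in\Mob(\S1)$. Hence the difference $\mu_{E(\tilde h_1)}(0)-\mu_{E(\tilde h_2)}(0)$ is controlled linearly by $\tilde h_1-\tilde h_2$ measured in a suitable weak norm $\|\cdot\|_{*}$, with Lipschitz constant uniform on $U$.

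The key remaining input is a conformal scaling estimate of the form
\[
\|\tilde h_1-\tilde h_2\|_{*}\le C\,(1-|z|)^\alpha\,\|h_1-h_2\|_{C^{1+\alpha}(\S1)}.
\]
The weight factor $(1-|z|)^\alpha$ arises because $\gamma_z$ compresses a boundary arc of length $\sim (1-|z|)$ onto most of $\S1$, so that $\alpha$-H\"older moduli of functions pre-composed with $\gamma_z$ acquire this scaling, while post-composition with the M\"obius map $\delta_h^{-1}$ contributes only a bounded multiplicative constant. The main obstacle is to make this estimate precise and uniform in $h\in U$: since $\delta_h$ depends on $h$ through $w=E(h)(z)$, one must first show that the family $\{\delta_h:h\in U\}$ is uniformly controlled, equivalently that the hyperbolic distance $d_\D(z,E(h)(z))$ is bounded uniformly on $U$. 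This follows from the Douady--Earle property that $E(h)$ is a quasi-isometry of the hyperbolic disk with constants depending only on the quasisymmetry constant of $h$, which is uniformly bounded on $U$. Combining the scaling estimate with the linear control from the previous step gives the desired pointwise Lipschitz bound and completes the proof.
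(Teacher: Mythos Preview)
Your overall architecture matches the paper's: reduce to the origin via conformal naturality, use the explicit Fourier-coefficient formula for $\mu_{e(\varphi)}(0)$ when the barycenter is $0$, and control numerator and denominator separately. The genuine gap is in the step you call the ``conformal scaling estimate,'' which is the heart of the matter.

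Your heuristic that ``$\alpha$-H\"older moduli of functions pre-composed with $\gamma_z$ acquire the scaling $(1-|z|)^\alpha$'' is not correct as stated: $|\gamma_z'|$ on $\S1$ ranges from $\sim(1-|z|)$ up to $\sim(1-|z|)^{-1}$, so neither the sup-norm nor the H\"older seminorm of $(h_1-h_2)\circ\gamma_z$ picks up a uniform factor $(1-|z|)^\alpha$; in fact $\|(h_1-h_2)\circ\gamma_z\|_\infty=\|h_1-h_2\|_\infty$. What \emph{does} scale correctly is the sup-norm of the conjugate $h_t^{-1}\circ\psi\circ h_t-\id$, provided one first normalizes $\psi:=h_2\circ h_1^{-1}$ (modulo $\Mob(\S1)$) so that $\psi$ fixes the nearest boundary point $\eta$ with derivative $1$ there; only then does the $(1+\alpha)$-order remainder $|\psi(\zeta)-\zeta|\le C|\zeta-\eta|^{1+\alpha}$ (Proposition~\ref{setup}) combine with the explicit form of $h_t$ to yield $\|h_t^{-1}\psi h_t-\id\|_\infty\lesssim C(1-t)^\alpha$ (Lemma~\ref{key}). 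Your $\delta_h$, by contrast, is chosen to send the \emph{barycenter} to $0$, which is a different normalization from the $1$-jet normalization at $\eta$. You cannot dismiss this as ``a bounded multiplicative constant'': uniform control of the family $\{\delta_h:h\in U\}$ is not enough, you need $|E(h_1)(z)-E(h_2)(z)|$ to itself be $O((1-|z|)^\alpha\|h_1-h_2\|_{C^{1+\alpha}})$, and this does not follow from the quasi-isometry bound on $d_\D(z,E(h)(z))$. In the paper this reconciliation takes all of Sections~\ref{5}--\ref{6}: Lemma~\ref{akari2} shows $h_t^{-1}\varphi_0^\eta h_t$ converges to a specific $h_{s_\eta}\in\Mob(\S1)$, and a Rouch\'e-type argument on the vector field $\xi$ (Lemmas~\ref{lower}--\ref{rouche}) then pins both barycenters within $O(\varepsilon)$ of each other. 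The lower bound on the denominator $|a_1-\bar a_{-1}b|$ likewise does not come from a general fact in \cite{Mat} about ``bounded $C^{1+\alpha}$-neighborhoods,'' but from the concrete estimate $\|g_{t,\eta}\varphi_t^\eta-\id\|_{\S1}\le 1/112$ produced by Lemma~\ref{akari2}. Finally, the map $\varphi\mapsto(a_{-1}-\bar a_1 b)/(a_1-\bar a_{-1}b)$ is cubic rather than linear in $\varphi$; it vanishes on all of $\Mob(\S1)$, not just rotations, but the ``linear control'' you invoke is only available once the denominator is already uniformly bounded below.
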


As a well-known consequence from the existence of a continuous section, we understand
a topological structure of this space. Note that $T_0^\alpha=\Diff^{1+\alpha}_*(\S1)$ is also a topological group \cite{Mat}.

\begin{corollary}
The Teich\-m\"ul\-ler space $T_0^\alpha$ is contractible.
\end{corollary}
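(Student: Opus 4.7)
The plan is to use the classical fact that a space which is a retract of a contractible space is itself contractible, together with the linear convexity of $\Bel_0^\alpha(\D)$.

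First I would observe that $\Bel_0^\alpha(\D)$ is a convex subset of $L^\infty(\D)$ containing the zero coefficient. Indeed, the condition $\Vert\mu\Vert_{\infty,\alpha}<\infty$ defines a linear subspace of $L^\infty(\D)$, and intersecting it with the open unit ball $\Bel(\D)=L^\infty(\D)_1$ yields a convex set; moreover $0\in\Bel_0^\alpha(\D)$ and $q(0)=\id_{\S1}$ is the basepoint of $T_0^\alpha$. Hence the straight-line homotopy $H:\Bel_0^\alpha(\D)\times[0,1]\to\Bel_0^\alpha(\D)$ defined by $H(\mu,t)=t\mu$ is well defined, and it is jointly continuous in $(\mu,t)$ for the topology induced by $\Vert\cdot\Vert_{\infty,\alpha}$, since $\Vert t\mu-s\nu\Vert_{\infty,\alpha}\le |t|\Vert\mu-\nu\Vert_{\infty,\alpha}+|t-s|\Vert\nu\Vert_{\infty,\alpha}$.

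Next I would combine this homotopy with the maps $e$ and $q$. Define
$$
F:T_0^\alpha\times[0,1]\longrightarrow T_0^\alpha,\qquad F(\varphi,t)=q\bigl(t\cdot e(\varphi)\bigr).
$$
By Theorem \ref{main1}, $e:T_0^\alpha\to\Bel_0^\alpha(\D)$ is continuous, and by the result from \cite{Mat} recalled in the introduction, $q:\Bel_0^\alpha(\D)\to T_0^\alpha$ is continuous; together with the continuity of $H$, the map $F$ is continuous. At the endpoints we have $F(\varphi,1)=q(e(\varphi))=\varphi$ since $e$ is a section of $q$, and $F(\varphi,0)=q(0)=\id_{\S1}$. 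Thus $F$ is a contraction of $T_0^\alpha$ onto its basepoint, so $T_0^\alpha$ is contractible.

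There is essentially no obstacle left: all the analytic work is packaged into the continuity of $q$ (proved in \cite{Mat}) and the continuity of $e$ (the content of Theorem \ref{main1}), so the corollary is a formal consequence. The only point that merits a line of care is the convexity of $\Bel_0^\alpha(\D)$ and the fact that the rescaling $\mu\mapsto t\mu$ keeps a coefficient inside $\Bel_0^\alpha(\D)$ for every $t\in[0,1]$, which is immediate since both $\Vert\cdot\Vert_\infty$ and $\Vert\cdot\Vert_{\infty,\alpha}$ are homogeneous and $t\le 1$.
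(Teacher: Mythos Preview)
Your proof is correct and is precisely the standard argument the paper has in mind. The paper does not give an explicit proof, labeling the corollary a ``well-known consequence from the existence of a continuous section''; your contraction $F(\varphi,t)=q\bigl(t\cdot e(\varphi)\bigr)$ via the convexity of $\Bel_0^\alpha(\D)$ supplies exactly the intended details.
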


%We leave a question about whether $T_0^\alpha$ is a Lie group or not. To see this, we have to examine the differentiability of
%the map $e \circ q:\Bel_0^{\alpha}(\D) \to \Bel_0^{\alpha}(\D)$.

In the next section, we will explain the above mentioned concepts and results in more detail.

\medskip
\section{Preliminaries}\label{2}

In this section, we summarize several results on the background of our arguments.
This includes the definition and properties of 
the barycentric extension of quasisymmetric self-homeomorphisms of the circle,
fundamental results on the universal Teich\-m\"ul\-ler space and preliminaries on
the space of circle diffeomorphisms with H\"older continuous derivatives.
For the results mentioned in this section on quasiconformal and quasisymmetric homeomorphisms
as well as Teich\-m\"ul\-ler spaces, we can consult the monograph by Lehto \cite{Leh}.

\subsection{Quasiconformal and quasisymmetric homeomorphisms}
We denote the group of all quasiconformal self-homeomorphisms of the unit disk $\D$ by $\QC(\D)$ and
the group of all quasisymmetric self-homeomorphism of the unit circle $\S1$ by $\QS(\S1)$.
Every $f \in \QC(\D)$ extends continuously to a quasisymmetric homeomorphism of $\S1$. This boundary extension defines
a homomorphism 
$q:\QC(\D) \to \QS(\S1)$.
Conversely, every $\varphi \in \QS(\S1)$ extends continuously to a quasiconformal homeomorphism of $\D$, 
in other words, $q$ is surjective.
In fact, there are explicit ways of giving such quasiconformal extension which defines a section $e:\QS(\S1) \to \QC(\D)$
with $q \circ e={\rm id}|_{\QS}$. The Beurling-Ahlfors extension \cite{BA} and
the Douady-Earle extension \cite{DE} are well-known.

\subsection{The barycentric extension}
The {barycentric extension} or the Douady-Earle extension $e(\varphi)$ of
an orientation-preserving self-homeomorphism $\varphi \in \Homeo(\S1)$ is given as follows.
The {\it average} of $\varphi$ taken at $w \in \D$ is defined by
$$
\xi_\varphi(w)=\frac{1}{2\pi} \int_{\S1}\gamma_w(\varphi(\zeta))|d\zeta|
=\frac{1}{2\pi} \int_{\S1} \frac{\varphi(\zeta)-w}{1-\bar w \varphi(\zeta)}|d\zeta|,
$$
where the M\"obius transformation 
$$
\gamma_w(z)=\frac{z-w}{1-\bar w z} \in \Mob(\D)
$$
sends $w$ to the origin $0$.
The {\it barycenter} of $\varphi$ is a point $w_0 \in \D$ such that $\xi_\varphi(w_0)=0$. This exists uniquely.
The value of the barycentric extension $e(\varphi)$ at the origin $0$ is defined to be the barycenter $w_0$; we set
$e(\varphi)(0)=w_0$.
 
For an arbitrary point $z \in \D$, the barycentric extension $e(\varphi)$ is defined by
$$
{e(\varphi)(z)=e(\varphi \circ \gamma)(0)},
$$
where $\gamma \in \Mob(\D)$ is any M\"obius transformation that maps $0$ to $z$, say, $\gamma=\gamma_z^{-1}$.
This is well-defined since $\xi_{\varphi \circ r}(0)=\xi_\varphi(0)$ for any rotation $r$, which is 
a M\"obius transformation fixing $0$.

An alternative definition was introduced by Lecko and Partyka \cite{LP}.
For each $w \in \D$, we consider the harmonic extension (the Poisson integral) of $\gamma_w \circ \varphi \in \Homeo(\S1)$;
$$
P_w(z):=\frac{1}{2\pi} \int_{\S1} \gamma_w \circ \varphi(\zeta) |\gamma'_z(\zeta)||d\zeta|.
$$
Since $P_w$ is a self-homeomorphism of $\D$ by the Rad\'o-Kneser-Choquet theorem, there exists a unique point $z \in \D$ such that $P_w(z)=0$.
We define a map $e_*(\varphi):\D \to \D$ by $e_*(\varphi)(w)=z$. 
Then $e(\varphi)=e_*(\varphi)^{-1}$. Indeed, $e(\varphi)(z)=w$ and $e_*(\varphi)(w)=z$ are equivalent to
the conditions 
$$
\frac{1}{2\pi} \int_{\S1} \gamma_w \circ \varphi(\gamma_z^{-1}(\tilde \zeta)) |d\tilde \zeta|=0;\quad
\frac{1}{2\pi} \int_{\S1} \gamma_w \circ \varphi(\zeta) |\gamma'_z(\zeta)||d\zeta|=0,
$$
respectively. By substitution $\tilde \zeta=\gamma_z(\zeta)$, we see that these integrals are the same.

The application of the barycentric extension to a quasisymmetric homeomorphism yields the following
fundamental result.

\begin{thm}[\cite{DE}]
For every $\varphi \in \QS(\S1)$, the barycentric extension gives
$e(\varphi) \in \QC(\D)$.
\end{thm}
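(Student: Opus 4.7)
The plan is to follow the Douady--Earle strategy, which rests on three ingredients: (i) existence and uniqueness of the barycenter so that $e(\varphi)$ is a well-defined self-map of $\D$; (ii) conformal naturality, i.e., $e(\alpha\circ\varphi\circ\beta)=\alpha\circ e(\varphi)\circ\beta$ for all $\alpha,\beta\in\Mob(\D)$; and (iii) a uniform bound on the complex dilatation of $e(\varphi)$ at the origin depending only on the quasisymmetry constant of $\varphi$. Combining (ii) and (iii) propagates the bound from a single point to all of $\D$ and yields quasiconformality.

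For (i), one checks that the average map $\xi_\varphi\colon\D\to\D$ defined in the excerpt extends continuously to $\overline\D$ with boundary values $\varphi$; a Brouwer degree argument then produces some $w_0\in\D$ with $\xi_\varphi(w_0)=0$, and computing the Jacobian of $\xi_\varphi$ shows that it is nonvanishing, so $\xi_\varphi$ is in fact a smooth diffeomorphism of $\D$, giving uniqueness of the barycenter. For (ii), the M\"obius equivariance follows by direct substitution in the defining integral, using that for $\alpha\in\Mob(\D)$ the composition $\gamma_{\alpha(w)}\circ\alpha$ differs from $\gamma_w$ by a rotation, together with the rotation-invariance $\xi_{\varphi\circ r}(0)=\xi_\varphi(0)$ noted in the excerpt. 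Thanks to (ii), the Beltrami coefficient of $e(\varphi)$ at any $z\in\D$ equals the Beltrami coefficient at $0$ of a suitably conjugated map, so it suffices to carry out the analysis at the origin.

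For (iii), the main obstacle, differentiating the implicit relation $\xi_\varphi(e(\varphi)(z))=0$ (jointly with its conjugate) yields $C^\infty$-regularity of $e(\varphi)$ and an explicit integral expression for the partial derivatives $\partial e(\varphi)(0)$ and $\bar\partial e(\varphi)(0)$ in terms of low-order Fourier moments of $\varphi$. The crux is to show that the resulting complex dilatation $\mu(0)=\bar\partial e(\varphi)(0)/\partial e(\varphi)(0)$ satisfies $|\mu(0)|\le k(M)<1$ where $k$ depends only on the quasisymmetric constant $M$ of $\varphi$. Extracting such a bound by direct estimation from the integral formula is delicate; the cleaner route is a compactness argument. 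The family of $\varphi\in\QS(\S^1)$ with QS-constant at most $M$ and normalized by the barycenter condition $\xi_\varphi(0)=0$ is normal in $\Homeo(\S^1)$ (indeed compact, since quasisymmetry is preserved in the limit), the map $\varphi\mapsto\mu(0)$ is continuous on this family by the integral formula, and at each $\varphi$ one has $|\mu(0)|<1$ because $e(\varphi)$ is a local diffeomorphism at $0$. Continuity on a compact set then gives a uniform bound $|\mu(0)|\le k(M)<1$, from which conformal naturality yields $\|\mu_{e(\varphi)}\|_\infty\le k(M)<1$ and hence $e(\varphi)\in\QC(\D)$.
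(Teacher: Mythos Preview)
The paper does not give its own proof of this theorem; it is stated in Section~\ref{2} as a background result with a bare citation to Douady--Earle \cite{DE}. Your proposal is a faithful outline of the original Douady--Earle argument (conformal naturality reduces the dilatation estimate to the origin, and a compactness argument on the normalized family $\{\varphi:\xi_\varphi(0)=0,\ M(\varphi)\le M\}$ gives the uniform bound $|\mu(0)|\le k(M)<1$), so there is nothing to compare: you have reproduced the proof the paper is citing. One small inaccuracy in your sketch is the claim that $\xi_\varphi$ extends continuously to $\overline{\D}$ with boundary values $\varphi$; it is $e(\varphi)$ itself (equivalently the inverse of the harmonic-extension map $e_*(\varphi)$ described in the paper) that extends to $\varphi$ on $\S1$, not the average function $\xi_\varphi$, but this does not affect the substance of the argument.
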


Besides Douady and Earle \cite{DE}, we can find an expository on the barycentric extension in Pommerenke \cite[Section 5.5]{P},
which we consult occasionally hereafter.

\subsection{Conformal naturality}
The barycentric extension $e(\varphi)$ of $\varphi \in \Homeo(\S1)$ has the conformal naturality in the following sense:
$$
{e(g \circ \varphi \circ \gamma)=g \circ e(\varphi) \circ \gamma}
$$
for any $g, \gamma \in \Mob(\S1)=\Mob(\D)$. Indeed, $e(\varphi \circ \gamma)=e(\varphi) \circ \gamma$
comes from the above definition of $e(\varphi)$. On the other hand, $e(g \circ \varphi)=g \circ e(\varphi)$ comes from a formula
$$
\frac{g(z)-g(w)}{1-\overline{g(w)}g(z)}=e^{i \theta(w)} \frac{z-w}{1-\bar w z}
$$
for some function $\theta:\D \to \R$ of $w$ independent of $z$.
Actually, if $\xi_{\varphi}(w_0)=0$, then
$$
\xi_{g \circ \varphi}(g(w_0))=\frac{1}{2\pi}\int_{\S1}\frac{g(\varphi(\zeta))-g(w_0)}{1-\overline{g(w_0)}g(\varphi(\zeta))}|d\zeta|
=\frac{e^{i\theta(w_0)}}{2\pi}\int_{\S1}\frac{\varphi(\zeta)-w_0}{1-\overline{w_0}\varphi(\zeta)}|d\zeta|=0.
$$

For $f \in \QC(\D)$, we denote the complex dilatation of $f$ by $\mu_f(z)=\bar \partial f(z)/\partial f(z)$.
The conformal naturality of the barycentric extension for quasisymmetric homeomorphisms
in terms of complex dilatations can be described as follows:
$$
\mu_{e(g \circ \varphi \circ \gamma)}(z)=\mu_{g \circ e(\varphi) \circ \gamma}(z)
=\mu_{e(\varphi)}(\gamma(z)) \frac{\overline{\gamma'(z)}}{\gamma'(z)}
$$
for any $g,\gamma \in \Mob(\S1)=\Mob(\D)$ and for any $\varphi \in \QS(\S1)$.
In particular, this implies
$$
{|\mu_{e(g \circ \varphi \circ \gamma)}(z)|=|\mu_{e(\varphi)}(\gamma(z))|}.
$$

\subsection{Continuity of the barycentric extension}\label{2.4}
The subgroups consisting of the {normalized elements} of $\QC(\D)$ and $\QS(\S1)$ fixing three points on $\S1$, say
$1,i,-1$,  
are denoted by ${\QC_*(\D)}$ and ${\QS_*(\S1)}$, respectively.

By the solution of Beltrami equation (the measurable Riemann mapping theorem), $\QC_*(\D)$ is identified with
the space of Beltrami coefficients on $\D$:
$$
\Bel(\D)=\{\mu \in L^\infty(\D) \mid \Vert \mu \Vert_\infty<1\}.
$$
On the other hand, {$\QS_*(\S1)$} can be regarded as the {\it universal Teich\-m\"ul\-ler space} $T$, which is
equipped with the right uniform topology induced by the quasisymmetry constant $M(\varphi) \geq 1$ for $\varphi \in \QS(\S1)$;
%$$
%M(\varphi)=\sup_{0 \leq x <2\pi,\ 0<t<\pi} 
%\left| \frac{\varphi(e^{i(x+t)})-\varphi(e^{ix})}{\varphi(e^{ix})-\varphi(e^{i(x-t)})} \right|^{\pm 1} \geq 1;
%$$
a sequence $\varphi_n$ converges to $\varphi$ in $\QS(\S1)$ 
if $M(\varphi_n \circ \varphi^{-1}) \to 1$ $(n \to \infty)$. We note that there are several different ways of defining 
the quasisymmetry constant $M$, say, using the cross ratio, but they all induce the same topology. 

Under the above identification, the restriction of $q$ to $\QC_*(\D)=\Bel(\D)$ plays the role of the Teich\-m\"ul\-ler projection.
A basic property of this projection is the following.

\begin{prop}
The Teich\-m\"ul\-ler projection 
$$
q:\Bel(\D)=\QC_*(\D) \to T=\QS_*(\S1)
$$ 
is continuous and open.
\end{prop}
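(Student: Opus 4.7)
The plan is to establish continuity and openness as separate statements, each reduced to a manipulation of the chain rule for complex dilatations.

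For continuity I would take a sequence $\mu_n \to \mu$ in $\Bel(\D)$ and examine the quasiconformal self-homeomorphism $h_n := f^{\mu_n} \circ (f^\mu)^{-1}$ of $\D$, whose boundary extension is $q(\mu_n) \circ q(\mu)^{-1}$. The composition formula for Beltrami coefficients yields the pointwise bound
$$
|\mu_{h_n}(f^\mu(z))| = \frac{|\mu_n(z) - \mu(z)|}{|1 - \overline{\mu(z)}\,\mu_n(z)|} \le \frac{\Vert \mu_n - \mu \Vert_\infty}{1 - \Vert \mu \Vert_\infty \Vert \mu_n \Vert_\infty},
$$
so $\Vert \mu_{h_n}\Vert_\infty \to 0$ and hence the maximal dilatation $K(h_n) \to 1$. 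Classical cross-ratio distortion (or Mori-type) inequalities then convert $K(h_n) \to 1$ into $M(q(\mu_n) \circ q(\mu)^{-1}) \to 1$, which is exactly convergence in the right uniform topology of $T$.

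For openness I would fix $\mu_0 \in \Bel(\D)$ with $\varphi_0 := q(\mu_0)$ and show that for every $\varepsilon > 0$ the image $q(B(\mu_0,\varepsilon))$ contains a neighborhood of $\varphi_0$ in $T$. Given $\varphi \in T$ near $\varphi_0$, set $\psi = \varphi \circ \varphi_0^{-1}$, so $M(\psi)$ is small. The Douady--Earle theorem already stated gives $e(\psi) \in \QC(\D)$; in the quantitative form one has a bound $K(e(\psi)) \le F(M(\psi))$ with $F$ continuous and $F(1)=1$, so that $\Vert \mu_{e(\psi)}\Vert_\infty$ is correspondingly small. Defining $\nu$ as the complex dilatation of $e(\psi) \circ f^{\mu_0}$, the composition formula controls $\Vert \nu - \mu_0 \Vert_\infty$ in terms of $\Vert \mu_{e(\psi)}\Vert_\infty$, so $\nu \in B(\mu_0,\varepsilon)$ for $\varphi$ close enough to $\varphi_0$, and by construction $q(\nu) = \psi \circ \varphi_0 = \varphi$.

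The hard part will be the quantitative dilatation estimate for the Douady--Earle extension used in the openness step: one needs $\Vert \mu_{e(\psi)}\Vert_\infty \to 0$ as $M(\psi) \to 1$, a strengthening of the qualitative statement $e(\psi) \in \QC(\D)$. This refinement is classical (an analogous estimate holds for the Beurling--Ahlfors extension), and once accepted both halves of the proposition reduce to routine applications of the chain rule for Beltrami coefficients.
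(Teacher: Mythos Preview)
The paper does not actually prove this proposition: it is stated in the preliminaries (Section~2.4) as a standard background fact, with a blanket reference to Lehto's monograph for such results. So there is no in-paper proof to compare against.

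Your outline is a correct route to the result. The continuity half is the standard chain-rule computation. For openness, your construction is effectively a continuous local section of $q$ through an arbitrary $\mu_0$, built by post-composing $f^{\mu_0}$ with an extension of $\psi=\varphi\circ\varphi_0^{-1}$; this is exactly how openness is usually derived, and it is fine. One logical caution: the quantitative input you need, namely that $M(\psi)\to 1$ forces $\Vert\mu_{e(\psi)}\Vert_\infty\to 0$, is (in the paper's ordering) the content of the Douady--Earle continuity theorem stated \emph{after} this proposition. If you want to avoid a forward reference, use the Beurling--Ahlfors extension instead, for which the corresponding dilatation bound in terms of the quasisymmetry constant is elementary and classical; your argument then goes through without change.
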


The section for $q$ given by the barycentric extension is also compatible with the topology.

\begin{thm}[\cite{DE}]
The barycentric section  
$$
e:T=\QS_*(\S1) \to \Bel(\D)=\QC_*(\D)
$$
is continuous. In fact, the composition $e \circ q:\Bel(\D) \to \Bel(\D)$ is real analytic.
\end{thm}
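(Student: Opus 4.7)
The strategy is to prove the stronger statement that $e\circ q:\Bel(\D)\to\Bel(\D)$ is real-analytic, and then deduce the continuity of $e$ from the openness of $q$ given by the preceding proposition. For $\mu\in\Bel(\D)$, let $f_\mu\in\QC_*(\D)$ be the normalized quasiconformal map with complex dilatation $\mu$, and set $\varphi_\mu=f_\mu|_{\S1}=q(\mu)$. Using the Lecko--Partyka reformulation, $e(\varphi_\mu)(z)=w$ is equivalent to
$$F(\mu,z,w):=\frac{1}{2\pi}\int_{\S1}\frac{f_\mu(\zeta)-w}{1-\bar w\,f_\mu(\zeta)}\,|\gamma_z'(\zeta)|\,|d\zeta|=0.$$
By the Ahlfors--Bers theorem, $\mu\mapsto f_\mu(\zeta)$ is holomorphic for each fixed $\zeta\in\S1$, so $F$ depends real-analytically on $(\mu,z,w)$ (holomorphically in $\mu$).

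I would next verify that the $\R$-linear derivative $D_w F(\mu,z_0,w_0):\C\to\C$ at every solution $(\mu,z_0,w_0)$ is invertible, which is the hypothesis needed to apply the real-analytic implicit function theorem in the Banach space $L^\infty(\D)$ and obtain $w(\mu,z)=e(\varphi_\mu)(z)$ as a real-analytic function of $\mu$. To check non-degeneracy, I would first reduce to the case $w_0=0$ using the conformal naturality formula $e(g\circ\varphi)=g\circ e(\varphi)$ with $g=\gamma_{w_0}$, which moves the barycenter to the origin. At $w_0=0$ a direct computation gives
$$\partial_w F\big|_{w=0}=-1,\qquad \partial_{\bar w}F\big|_{w=0}=\frac{1}{2\pi}\int_{\S1}f_\mu(\zeta)^2\,|\gamma_z'(\zeta)|\,|d\zeta|,$$
and the second quantity has modulus strictly less than $1$ because $|f_\mu(\zeta)^2|=1$ a.e.\ on $\S1$ but $f_\mu|_{\S1}$ is a non-constant $\S1$-valued homeomorphism, so its argument produces nontrivial cancellation. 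Hence $|\partial_w F|^2-|\partial_{\bar w}F|^2>0$ and $D_wF$ is $\R$-linearly invertible.

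Once $w(\mu,z)$ is real-analytic in $\mu$, implicit differentiation of $F(\mu,z,w(\mu,z))=0$ in $z$ and $\bar z$ yields real-analytic formulas for $\partial_z w$ and $\bar\partial_z w$, and hence for $\mu_{e(\varphi_\mu)}(z)=\bar\partial_z w/\partial_z w$. The classical fact that $e(\varphi_\mu)$ is $K$-quasiconformal with $K=K(M(\varphi_\mu))$ provides a uniform bound $\|\mu_{e(\varphi_\mu)}\|_\infty<1$ on compact subsets of $\Bel(\D)$, and this upgrades pointwise real-analyticity to real-analyticity of $\mu\mapsto\mu_{e(\varphi_\mu)}$ as a map into $\Bel(\D)\subset L^\infty(\D)$. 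Continuity of $e:T\to\Bel(\D)$ then follows: for any open $U\subset\Bel(\D)$, we have $e^{-1}(U)=q(q^{-1}(e^{-1}(U)))=q((e\circ q)^{-1}(U))$, which is open in $T$ since $(e\circ q)^{-1}(U)$ is open in $\Bel(\D)$ and $q$ is open.

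The main obstacle, in my view, is the passage in the last step from pointwise real-analyticity of $\mu\mapsto\mu_{e(\varphi_\mu)}(z)$ to real-analyticity in the Banach-space sense with values in $L^\infty(\D)$. This requires uniform-in-$z$ control on the radius of convergence of the Taylor expansion of $F$ in $\mu$ and on the operator norm of $(D_w F)^{-1}$, both of which must be made independent of the basepoint $z$ after the conformal reduction to the barycenter; quantifying the strict inequality $|\partial_{\bar w}F|<|\partial_w F|$ uniformly in terms of $\|\mu\|_\infty$ alone is the technical heart of the argument.
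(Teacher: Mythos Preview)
The paper does not give its own proof of this theorem; it is quoted in Section~\ref{2} as a preliminary result from Douady--Earle \cite{DE}, so there is nothing in the paper to compare your argument against directly. That said, your outline is essentially the Douady--Earle strategy: encode $e(\varphi_\mu)(z)=w$ as the zero of a function $F(\mu,z,w)$ that is holomorphic in $\mu$ (via Ahlfors--Bers), check non-degeneracy of $D_wF$ by the computation $|\partial_{\bar w}F|<|\partial_wF|$ after normalizing the barycenter to $0$, and invoke a real-analytic implicit function theorem. Your derivation of continuity of $e$ from openness of $q$ and continuity of $e\circ q$ via $e^{-1}(U)=q\bigl((e\circ q)^{-1}(U)\bigr)$ is also correct.

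You have correctly located the real gap: the step from pointwise-in-$z$ real-analyticity of $\mu\mapsto \mu_{e(\varphi_\mu)}(z)$ to real-analyticity as a map into $L^\infty(\D)$ requires uniform control of $(D_wF)^{-1}$, i.e.\ a lower bound on $|\partial_wF|^2-|\partial_{\bar w}F|^2$ that is uniform in $z$ and locally uniform in $\mu$. Your justification ``nontrivial cancellation'' only gives the strict inequality pointwise. In the Douady--Earle argument this uniformity comes from a quantitative estimate on the Fourier coefficients: after reducing by conformal naturality to $\varphi$ with barycenter $0$ and $\Vert\varphi-\id\Vert$ small, one has a uniform positive lower bound on $|a_1|^2-|a_{-1}|^2$ (this is Lemma~3 of \cite{DE}, or Lemma~5.18 of \cite{P}, and is exactly the estimate the present paper invokes at the end of Section~\ref{6}). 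Without this lemma your argument is incomplete; with it, the implicit function theorem yields a Taylor expansion in $\mu$ whose coefficients are bounded in $L^\infty(\D)$, which is what you need.

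One minor point: your reduction to $w_0=0$ via $g=\gamma_{w_0}$ replaces $\varphi_\mu$ by $g\circ\varphi_\mu$, but $g$ depends on $(\mu,z)$ through $w_0=e(\varphi_\mu)(z)$, so to keep the analyticity in $\mu$ you should either carry out the derivative computation at general $w_0$ directly, or first establish that $w_0$ itself depends analytically on $\mu$ (which is the very conclusion you are after). It is cleaner to compute $\partial_wF$ and $\partial_{\bar w}F$ at an arbitrary $w_0$ with $F=0$; the same strict inequality $|\partial_{\bar w}F|<|\partial_wF|$ holds there, as in \cite[p.~28]{DE}.
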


\subsection
{Diffeomorphisms with H\"older continuous derivatives}

An orientation-pre\-serving diffeomorphism $\varphi \in \Diff(\S1)$ belongs to the class
{$\Diff^{1+\alpha}(\S1)$} for $\alpha \in (0,1)$ if {its derivative is
$\alpha$-H\"older continuous}. This means that the lift $\widetilde \varphi:\R \to \R$ of $\varphi$ 
given by $\exp(i \widetilde \varphi(x))=\varphi(e^{ix})$ satisfies
$$
|\widetilde \varphi'(x)-\widetilde \varphi'(y)| \leq c|x-y|^\alpha \qquad (x, y \in \R)
$$
for some constant $c \geq 0$.

We provide $\Diff^{1+\alpha}(\S1)$ with the right uniform topology induced by {$C^{1+\alpha}$-distance 
$p_{1+\alpha}(\varphi)$} from $\rm id$ to $\varphi \in \Diff^{1+\alpha}(\S1)$. Here
$$
p_{1+\alpha}(\varphi):=\sup_{\zeta \in \S1}|\varphi(\zeta)-\zeta|+\sup_{x \in \R}|\widetilde \varphi'(x)-1|+\sup_{x,y \in \R} \frac{|\widetilde \varphi'(x)-\widetilde \varphi'(y)|}{|x-y|^\alpha};
$$
and a sequence $\varphi_n$ is defined to converge to $\varphi$ in $\Diff^{1+\alpha}(\S1)$
if $p_{1+\alpha}(\varphi_n \circ \varphi^{-1}) \to 0$ $(n \to \infty)$. 
Note that $\Diff^{1+\alpha}(\S1)$ is a topological group with this topology \cite{Mat}.

\subsection
{Beltrami coefficients corresponding to $\Diff^{1+\alpha}(\S1)$}
For a Beltrami coefficient $\mu \in \Bel(\D)$, we define an $\alpha$-hyperbolic supremum norm $(\alpha \in (0,1))$
by
$$
\Vert \mu \Vert_{\infty,\alpha}={\rm ess.}\sup_{z \in \D}\, {\rho_{\D}^\alpha(z)}\,|\mu(z)|, 
\quad \rho_{\D}(z)=\frac{2}{1-|z|^2}.
$$
The space of Beltrami coefficients with $\Vert \mu \Vert_{\infty,\alpha}<\infty$ is denoted by 
{$\Bel_0^{\alpha}(\D)$}.

We can characterize $\Diff^{1+\alpha}(\S1)$ by their quasiconformal extension to $\D$.

\begin{thm}
A quasisymmetric homeo\-morphism $\varphi:\S1 \to \S1$ belongs to $\Diff^{1+\alpha}(\S1)$ if and only if
it has a quasiconformal extension $f:\D \to \D$ whose complex dilatation $\mu_f$ belongs to $\Bel_0^{\alpha}(\D)$.
\end{thm}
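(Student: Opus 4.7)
The plan is to establish both implications via the relationship between decay of the complex dilatation near $\S1$ and H\"older regularity of the boundary derivative.

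For the \emph{only if} direction, I would construct an explicit extension $f$ of $\varphi$ whose complex dilatation satisfies the desired decay. A natural choice is a Beurling--Ahlfors type extension: lift $\varphi$ to $\widetilde\varphi:\R\to\R$, extend to the upper half-plane via the averaging formula
$$
\widetilde f(x+iy)=\frac{1}{2}\int_0^1\bigl(\widetilde\varphi(x+ty)+\widetilde\varphi(x-ty)\bigr)\,dt+\frac{i}{2}\int_0^1\bigl(\widetilde\varphi(x+ty)-\widetilde\varphi(x-ty)\bigr)\,dt,
$$
and conjugate back to $\D$ by a Cayley map. A direct computation of $\partial\widetilde f$ and $\bar\partial\widetilde f$ shows that $|\mu_{\widetilde f}(x+iy)|$ is controlled by an average of $|\widetilde\varphi'(x+ty)-\widetilde\varphi'(x-ty)|$ for $t\in[0,1]$; the $\alpha$-H\"older hypothesis on $\widetilde\varphi'$ then yields $|\mu_{\widetilde f}(x+iy)|\leq C y^\alpha$. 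Transferring back to $\D$ gives $|\mu_f(z)|\lesssim (1-|z|)^\alpha$, equivalently $\Vert\mu_f\Vert_{\infty,\alpha}<\infty$.

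For the \emph{if} direction, assume $f\in\QC(\D)$ has $\mu_f\in\Bel_0^\alpha(\D)$. The plan is to use the Beltrami equation representation $\partial f=1+\mathcal{S}(\mu_f\partial f)$ for the Beurling transform $\mathcal{S}$, together with a Neumann-series iteration in a weighted H\"older space adapted to the weight $\rho_\D^{-\alpha}$, to deduce that $\partial f$ extends continuously to $\S1$ with modulus of continuity of order $\alpha$. The boundary trace then forces $\varphi=f|_{\S1}\in\Diff^{1+\alpha}(\S1)$.

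The main obstacle is the \emph{if} direction: the heart of the argument is the boundedness of the Beurling transform on the weighted H\"older space of functions with $\rho_\D^{-\alpha}$-growth, which requires Calder\'on--Zygmund estimates tailored to the hyperbolic geometry of $\D$. Granting this functional-analytic input, both directions reduce to the explicit computations sketched above.
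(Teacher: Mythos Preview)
The paper does not supply its own proof of this theorem; it only cites the literature: Carleson for the ``only if'' direction (via the Beurling--Ahlfors extension) and Dyn'kin and Anderson--Cant\'on--Fern\'andez for the ``if'' direction, with a further reference to \cite{Mat}. So your ``only if'' sketch is in line with the cited approach and is essentially correct: the Beurling--Ahlfors averaging formula does produce an extension whose complex dilatation is $O(y^\alpha)$ when $\widetilde\varphi'$ is $\alpha$-H\"older, and this transfers to the disk as stated.

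Your ``if'' direction, however, has a genuine gap. The representation $\partial f=1+\mathcal S(\mu_f\,\partial f)$ and the Neumann series hold for the normalized solution of the Beltrami equation on the whole plane, not for a given self-map of $\D$; to use it you must first reflect $\mu_f$ across $\S1$ and invoke uniqueness, and then the relevant weight degenerates along $\S1$ from both sides. More seriously, the boundedness of the Beurling transform on the weighted H\"older space with weight $\rho_{\D}^{-\alpha}$ is not a standard Calder\'on--Zygmund fact, and you would in addition need the operator norm of $\varphi\mapsto\mathcal S(\mu_f\varphi)$ on that space to be strictly less than $1$ to sum the series---the hypothesis $\Vert\mu_f\Vert_\infty<1$ does not give this automatically in a weighted norm. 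You acknowledge this as ``the main obstacle,'' but it is really the entire content of the implication; granting it is granting the theorem. The cited proofs avoid this route altogether: Dyn'kin works through estimates for asymptotically conformal maps (controlling $f'/|f'|$ and related quantities near the boundary), and Anderson--Cant\'on--Fern\'andez proceed via smoothness of symmetric mappings. If you want a self-contained argument, one viable path is to show that $\mu_f\in\Bel_0^\alpha(\D)$ forces the pre-Schwarzian $f''/f'$ of the conformal welding factor to lie in the appropriate weighted space, and then integrate; this is closer in spirit to what the references actually do.
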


``Only if'' part was proved by Carleson \cite{Car} using the Beurling-Ahlfors extension of
quasisymmetric functions on the real line.
``If'' part was investigated by Anderson and Hinkkanen \cite{AH} among others,
and settled by Dyn'kin \cite{D} 
%using his results on pseudo\-analytic extension of
%differentiable functions.
%after several works of weaker estimates. 
and Anderson, Cant\'on and Fern\'andez \cite{ACF}.
A different proof for an improved statement which is necessary to the arguments of Teichm\"uller spaces (Section \ref{2.7})
was given in \cite{Mat}.
%See also a survey article \cite{Mat0}
%in which the author was not aware of the last paper.

\subsection
{The Teich\-m\"ul\-ler space for $\Diff^{1+\alpha}(\S1)$}\label{2.7}

The previous theorem implies that the Teich\-m\"ul\-ler projection (boundary extension) gives a
surjective map
$$ 
q:\Bel_0^{\alpha}(\D) \to \Diff^{1+\alpha}_*(\S1),
$$
where the group $\Diff^{1+\alpha}_*(\S1)$ of the normalized elements can be defined to be the Teich\-m\"ul\-ler space $T_0^\alpha$ of
circle diffeomorphisms with $\alpha$-H\"older continuous derivatives.
Moreover, taking the topology into account, we have proved the following.

\begin{thm}[\cite{Mat}]
The Teich\-m\"ul\-ler projection 
$$
q:\Bel_0^{\alpha}(\D) \to T^\alpha_0=\Diff^{1+\alpha}_*(\S1)
$$ 
is continuous and open.
\end{thm}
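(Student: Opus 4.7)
The strategy is to reduce both assertions to the behavior at the basepoint $\mu=0$ by right-translation, and then to invoke a quantitative refinement of the characterization theorem in Section \ref{2.6}.

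For continuity, I would first verify that right composition by a fixed $f_0\in\QC_*(\D)$ with $\mu_{f_0}\in\Bel_0^\alpha(\D)$, and by its boundary value $\varphi_0=q(\mu_{f_0})$, give homeomorphisms of $\Bel_0^\alpha(\D)$ and of $T_0^\alpha$, respectively, intertwined by $q$. On the source, the chain-rule identity
$$
\mu_{f_\mu\circ f_0^{-1}}=\left(\frac{\mu-\mu_{f_0}}{1-\overline{\mu_{f_0}}\mu}\cdot\frac{\overline{\partial f_0}}{\partial f_0}\right)\circ f_0^{-1}
$$
shows that this translation is a M\"obius-type transformation in $\mu$, and the weighted norm $\Vert\cdot\Vert_{\infty,\alpha}$ is preserved up to multiplicative constants because $f_0$ is a $C^{1+\alpha}$-diffeomorphism up to the boundary (by the characterization theorem) and therefore distorts the hyperbolic density $\rho_\D$ by a bounded factor. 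On the target, right composition by $\varphi_0$ is a self-homeomorphism because $\Diff^{1+\alpha}(\S1)$ is a topological group. It therefore suffices to establish continuity of $q$ at $0$, for which I would prove the linear estimate $p_{1+\alpha}(q(\mu))\le C\Vert\mu\Vert_{\infty,\alpha}$ whenever $\Vert\mu\Vert_{\infty,\alpha}\le 1/2$. This is the quantitative content of the Carleson/Dyn'kin/Anderson--Cant\'on--Fern\'andez theorem: each summand in $p_{1+\alpha}(q(\mu))$ can be bounded by integrating $|\mu|$ against a suitable singular kernel on $\D$, and the pairing of this kernel with the weight $\rho_\D^{-\alpha}$ is finite, yielding the asserted linear dependence on $\Vert\mu\Vert_{\infty,\alpha}$.

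For openness, the same right-translation invariance reduces the problem to openness at $\id$, and here it suffices to construct a continuous local section $s:W\to\Bel_0^\alpha(\D)$ on a neighborhood $W$ of $\id$ in $T_0^\alpha$ with $s(\id)=0$. One can build such an $s$ in an elementary way---for instance by taking the Poisson extension of $\varphi$ in a collar $\{1-\delta<|z|<1\}$ and grafting it to the identity on $|z|\le 1-\delta$ through a smooth cutoff---producing a quasiconformal self-map of $\D$ whose complex dilatation is supported in the collar and satisfies $\Vert s(\varphi)\Vert_{\infty,\alpha}\le C\,p_{1+\alpha}(\varphi)$, continuously in $\varphi$. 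Given such an $s$, openness of $q$ at $\id$ is immediate: for any open $U\subset\Bel_0^\alpha(\D)$ containing $0$, the preimage $s^{-1}(U)$ is a neighborhood of $\id$ contained in $q(U)$.

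The principal obstacle is the linear H\"older estimate $p_{1+\alpha}(q(\mu))\le C\Vert\mu\Vert_{\infty,\alpha}$ at $\mu=0$. The three cited proofs of the characterization theorem are written for a qualitative conclusion, so one has to trace through their Cauchy- or Poisson-type integral representations with the weight $\rho_\D^\alpha$ built in from the start, and keep careful track of how the normalization at the three fixed points on $\S1$ enters the bounds. Once this quantitative estimate is in hand, the right-translation reduction and the construction of the ad hoc local section are comparatively routine.
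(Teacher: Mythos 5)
First, note that this theorem is not proved in the present paper at all: it is quoted from the preprint \cite{Mat} as background for Theorem \ref{main1}, so there is no in-paper argument to compare yours with; I can only assess your outline on its own terms, and it has genuine gaps in both halves. The clearest one is in the openness part. The grafted ``Poisson extension in a collar'' does not produce an element of $\Bel_0^\alpha(\D)$, let alone one with $\Vert s(\varphi)\Vert_{\infty,\alpha}\leq C\,p_{1+\alpha}(\varphi)$: near $\S1$ your map agrees (up to lower-order terms) with the harmonic extension of $\varphi$, whose $\bar\partial$-derivative is governed by the anti-holomorphic Fourier modes of $\varphi$ and generically does not vanish at the boundary. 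For instance, for $\varphi(e^{i\theta})=e^{i(\theta+\epsilon\sin 2\theta)}$ the harmonic extension is, to first order in $\epsilon$, $z+\tfrac{\epsilon}{2}(z^{3}-\bar z)$, so $|\mu|\approx\epsilon/2$ up to $\S1$ and $\rho_{\D}^{\alpha}|\mu|$ is unbounded, however small $p_{1+\alpha}(\varphi)$ is. Producing \emph{any} extension whose dilatation decays like $(1-|z|^2)^{\alpha}$ is exactly the nontrivial content of Carleson's theorem (via the Beurling--Ahlfors extension) and of the Proposition in Section \ref{2.7} (via the barycentric extension); your local section has to be built from one of these, together with a quantitative norm bound that itself requires proof.

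The continuity half also rests on two assertions that are not established. In the translation step you infer from $\mu_{f_0}\in\Bel_0^\alpha(\D)$ that $f_0$ is a $C^{1+\alpha}$-diffeomorphism up to the boundary; the characterization theorem says this only about the boundary values $\varphi_0$, not about the chosen quasiconformal representative, and what the weighted-norm comparison actually needs is the distortion estimate $1-|f_0(w)|\asymp 1-|w|$. For a general quasiconformal self-map of $\D$ this fails (one only gets power comparability), so it must be derived from the decay $|\mu_{f_0}(z)|\lesssim(1-|z|^2)^{\alpha}$ (a Dyn'kin-type boundary differentiability statement) or secured by working with a special extension --- and in the openness reduction the representative $f_{\mu_0}$ is given to you, not chosen, so this input cannot be avoided. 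Finally, the linear estimate $p_{1+\alpha}(q(\mu))\leq C\Vert\mu\Vert_{\infty,\alpha}$ near $\mu=0$, which you correctly identify as the heart of the matter, is precisely the quantitative sharpening of Dyn'kin and Anderson--Cant\'on--Fern\'andez that you propose to obtain by ``tracing through'' their qualitative arguments; as written this is a plan rather than a proof, and it is where essentially all of the analytic work of the cited theorem lies.
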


Concerning the section given by the barycentric extension, we have also obtained that it has the right image.

\begin{prop}[\cite{Mat}]
The image of the barycentric extension of circle diffeomorphisms with $\alpha$-H\"older continuous derivatives 
$$
e:T_0^\alpha=\Diff^{1+\alpha}_*(\S1) \to \Bel(\D)
$$
is contained in $\Bel_0^{\alpha}(\D)$.
\end{prop}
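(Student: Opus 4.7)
The plan is to combine the conformal naturality of the barycentric extension with an explicit analysis at the origin so as to convert the $C^{1+\alpha}$-regularity of $\varphi$ into an $\alpha$-hyperbolic decay of $|\mu_{e(\varphi)}|$. Fix $z_0\in\D$, set $w_0:=e(\varphi)(z_0)$, and define
$$\Phi_{z_0}:=\gamma_{w_0}\circ\varphi\circ\gamma_{z_0}^{-1}.$$
By the conformal naturality formulas recalled in the preliminaries, $e(\Phi_{z_0})(0)=0$ and $|\mu_{e(\Phi_{z_0})}(0)|=|\mu_{e(\varphi)}(z_0)|$. The proposition therefore reduces to the uniform bound
$$|\mu_{e(\Phi_{z_0})}(0)|\leq C\,(1-|z_0|^2)^\alpha$$
with $C$ depending only on the $C^{1+\alpha}$-norm of $\varphi$.

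For any $\Phi\in\QS(\S1)$ with $e(\Phi)(0)=0$, implicit differentiation of the identity $\xi_{\Phi\circ\gamma_z^{-1}}(e(\Phi)(z))=0$ at $z=0$ (equivalently, of the Lecko--Partyka Poisson formulation) yields closed formulas for $\partial e(\Phi)(0)$ and $\bar\partial e(\Phi)(0)$ as explicit linear integrals of $\Phi$ against fixed kernels on $\S1$ (cf.\ Pommerenke~\cite[\S5.5]{P}). Applying these formulas to $\Phi=\Phi_{z_0}$ and changing variables $\eta=\gamma_{z_0}^{-1}(\zeta)$, the arc-length measure is converted into the Poisson-type weight
$$|\gamma_{z_0}'(\eta)|\,|d\eta|=\frac{1-|z_0|^2}{|1-\bar z_0\eta|^2}\,|d\eta|,$$
which is a probability measure concentrated near the boundary point $\eta_0:=z_0/|z_0|$ at scale comparable to $1-|z_0|$. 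Consequently $\mu_{e(\Phi_{z_0})}(0)$ is controlled by weighted integrals of $\gamma_{w_0}\circ\varphi(\eta)$ against such concentrated kernels.

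To extract the $(1-|z_0|^2)^\alpha$ decay, Taylor-expand $\varphi$ at $\eta_0$ using its $\alpha$-H\"older derivative:
$$\varphi(\eta)=\varphi(\eta_0)+\varphi'(\eta_0)(\eta-\eta_0)+R(\eta),\qquad|R(\eta)|\leq\|\varphi'\|_{C^\alpha}|\eta-\eta_0|^{1+\alpha}.$$
The zeroth- and first-order parts form an infinitesimal M\"obius piece which, thanks to the matching condition $w_0=e(\varphi)(z_0)$, is absorbed into the normalization $\gamma_{w_0}$ and contributes only lower-order terms to the integrals in question; the H\"older remainder $R$ then yields the desired decay via the standard concentration estimate
$$\int_{\S1}|\eta-\eta_0|^{1+\alpha}\,\frac{1-|z_0|^2}{|1-\bar z_0\eta|^2}\,|d\eta|\;\lesssim\;(1-|z_0|^2)^\alpha.$$
The \textbf{main obstacle} is precisely this absorption step: one must quantitatively verify that the contribution of the affine Taylor part to the specific integrals determining $\mu_{e(\Phi_{z_0})}(0)$ is of order at most $(1-|z_0|^2)^\alpha$, so that only the H\"older remainder dictates the final rate. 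This exploits $w_0=e(\varphi)(z_0)$ as a built-in first-order matching between $\varphi$ and the normalizing M\"obius transformation, and constitutes the technical heart of the argument; once it is in place the remaining concentration estimates are routine.
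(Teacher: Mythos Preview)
This proposition is not proved in the present paper; it is quoted from the companion preprint~\cite{Mat}, so there is no proof here to compare against directly. Nevertheless, the machinery of Sections~\ref{4}--\ref{6} (built for the continuity theorem) is precisely what one would use, so let me assess your sketch against that route.

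Your outline has the right architecture---conformal naturality to bring the point to the origin, then the Fourier-coefficient formula for $\mu_{e(\Phi)}(0)$ when the barycenter is $0$---and your concentration estimate for the Poisson kernel is correct. But the step you yourself flag as the ``main obstacle'' is a genuine gap, and the way you frame it makes it harder than it needs to be. The affine jet $\varphi(\eta_0)+\varphi'(\eta_0)(\eta-\eta_0)$ does not map $\S1$ to $\S1$, so it is not an ``infinitesimal M\"obius piece'' and cannot literally be absorbed into $\gamma_{w_0}$. Moreover $w_0=e(\varphi)(z_0)$ is determined by all of $\varphi$, not by its $1$-jet at $\eta_0$, so the asserted first-order matching between $\varphi$ and $\gamma_{w_0}^{-1}$ is not automatic; you would still have to prove that the barycenter lands where the jet predicts, to within $O((1-|z_0|)^\alpha)$. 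Finally, you need a uniform lower bound on the denominator $|a_1-\bar a_{-1}b|$, which amounts to knowing that $\Phi_{z_0}$ stays uniformly close to the identity---again unsecured in your sketch.

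The paper's method sidesteps all of this by comparing with a \emph{genuine} M\"obius map chosen a priori from the derivative data. After normalizing by rotations so that $\varphi_0^\eta(1)=1$, one takes $h_{s_\eta}\in\Mob(\S1)$ with $(h_{s_\eta})'_{\S1}(1)=(\varphi_0^\eta)'_{\S1}(1)$; then $h_{s_\eta}^{-1}\circ\varphi_0^\eta$ fixes $1$ with derivative $1$ there, and Proposition~\ref{setup} together with Lemma~\ref{key} (using that $h_t$ and $h_{s_\eta}$ commute) give $\Vert h_{s_\eta}^{-1}\circ\varphi_t^\eta-\id\Vert_\infty\lesssim c_\alpha(\varphi_0)\,(1-t)^\alpha$. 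This single estimate replaces your absorption step: Corollary~\ref{barycenter} then locates the barycenter within $O((1-t)^\alpha)$ of $0$, Lemma~\ref{lower} controls the denominator, and the Fourier formula yields $|\mu_{e(\varphi_0^\eta)}(t)|=|\mu_{e(\varphi_t^\eta)}(0)|\lesssim(1-t)^\alpha$. The moral is that your linear Taylor comparison should be upgraded to a M\"obius comparison; once you do that, the ``obstacle'' disappears.
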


\medskip
\section{An outline of the proof}\label{3}
\renewcommand{\thetheorem}{\thesection.\arabic{theorem}}

This section is devoted to a sketch of the proof of our main theorem (Theorem \ref{main1}).
The arguments for the rigorous proof begins from the next section.
Since the proof is rather technical and complicated, it will be helpful to mention its outline
before.

We first give a set-up for the proof. Assuming the results in Section \ref{2.7},
we have only to prove the continuity of the barycentric extension $e$ as in the following statement.

\begin{theorem}\label{main2}
Suppose that {$\psi$ converge to $\id$ in $\Diff^{1+\alpha}(\S1)$}.
Then, for every $\varphi_0 \in \Diff^{1+\alpha}(\S1)$,
the complex dilatations {$\mu_{e(\psi \circ \varphi_0)}$} of their barycentric extensions
{converge to $\mu_{e(\varphi_0)}$ in $\Bel^{\alpha}_0(\D)$}, that is,
$$
\sup_{z \in \D}\left(\frac{2}{1-|z|^2}\right)^{\alpha}
|\mu_{e(\psi \circ \varphi_0)}(z)-\mu_{e(\varphi_0)}(z)| \to 0 \qquad (\psi \to \id). 
$$
\end{theorem}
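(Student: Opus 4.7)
My plan is to use conformal naturality to reduce the global weighted estimate to a pointwise estimate at the origin, derive an explicit integral formula for $\mu_{e(\varphi)}(0)$, and then gain the factor $\rho_\D(z)^{-\alpha}$ from the combination of H\"older regularity of $\varphi_0'$ with the concentration of the M\"obius pullback. By the conformal naturality reviewed in Section 2.3, $\mu_{e(\varphi\circ\gamma)}(z)=\mu_{e(\varphi)}(\gamma(z))\,\overline{\gamma'(z)}/\gamma'(z)$ for every $\gamma\in\Mob(\D)$. Taking $\gamma=\gamma_z^{-1}$ and evaluating at $0$, the unimodular factor cancels in the difference and
$$
\bigl|\mu_{e(\psi\circ\varphi_0)}(z)-\mu_{e(\varphi_0)}(z)\bigr|=\bigl|\mu_{e(\psi\circ\Psi_z)}(0)-\mu_{e(\Psi_z)}(0)\bigr|,\quad \Psi_z:=\varphi_0\circ\gamma_z^{-1}.
$$
It therefore suffices to show that the right-hand side is $o(\rho_\D(z)^{-\alpha})$ uniformly in $z\in\D$ as $\psi\to\id$ in $\Diff^{1+\alpha}(\S1)$.

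I next derive an explicit formula for $\mu_{e(\varphi)}(0)$ by implicit differentiation of $\xi_{\varphi\circ\gamma_z^{-1}}(e(\varphi)(z))=0$ at $z=0$. Using the elementary partial derivatives of $(z,w)\mapsto\gamma_w(\varphi(\gamma_z^{-1}(\zeta)))$, the implicit function theorem produces a $2\times 2$ linear system for $(\partial e(\varphi)(0),\bar\partial e(\varphi)(0))$ whose coefficients are integrals over $\S1$ of simple rational expressions in $\varphi(\zeta)$ and the barycenter $w_0=e(\varphi)(0)$. Solving the system, the ratio becomes an explicit rational expression $\mu_{e(\varphi)}(0)=N[\varphi]/D[\varphi]$ in such integrals, with $|D[\varphi]|$ bounded away from zero on bounded subsets of $\QS(\S1)$, a reformulation of the local injectivity of the Douady--Earle extension at $0$.

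The heart of the argument is then to estimate $|N[\psi\circ\Psi_z]-N[\Psi_z]|$ (and likewise for $D$) by a quantity of size $o(\rho_\D(z)^{-\alpha})$, uniformly in $z$. A crude $L^\infty$ estimate on the integrands only gives a bound in terms of $\|\psi-\id\|_{C^1}$ and loses the weight. To recover it I plan to use two ingredients. First, the barycenter condition $\xi_{\Psi_z}(w_0)=0$ supplies a mean-zero cancellation that lets one rewrite each difference as an integral of a quantity involving $\psi'-1$ rather than $\psi-\id$. Second, after performing the substitution $\zeta=\gamma_z^{-1}(\eta)$, the integrals acquire a Poisson-type factor $|(\gamma_z^{-1})'(\eta)|$ concentrated on an arc of length $\sim 1-|z|^2$; pairing this concentration with the $\alpha$-H\"older modulus of $(\psi\circ\varphi_0)'-\varphi_0'$ produces, by scale invariance, precisely the gain $(1-|z|^2)^{\alpha}$ needed to cancel $\rho_\D(z)^{\alpha}$.

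The main obstacle is to carry out these estimates uniformly in $z$ while the barycenter $w_0=e(\varphi_0)(z)$ itself moves towards $\partial\D$: the M\"obius factors $\gamma_{w_0}$ appearing in $N$ and $D$ degenerate, and the uniform lower bound on $|D[\Psi_z]|$ must be propagated. Here the fact that $e(\varphi_0)$ is a self-quasiconformal homeomorphism of $\D$ keeps $z$ and $w_0$ at comparable hyperbolic distance, and a conformal rescaling by $\gamma_{w_0}^{-1}$ in the target tames the degeneration. Threading the H\"older moduli of $\varphi_0'$ and $\psi'$ consistently through this double rescaling, with all constants independent of $z$, will be the technical heart of the proof.
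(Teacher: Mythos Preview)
Your overall architecture---reduce to the origin by conformal naturality, express $\mu_{e(\varphi)}(0)$ as an explicit rational function of a few integrals, and recover the factor $(1-|z|^2)^\alpha$ from the interaction of the H\"older regularity with the M\"obius concentration---is exactly the paper's. Where your plan diverges is in the mechanism for that last step, and there I think you have a gap. You propose that ``the barycenter condition $\xi_{\Psi_z}(w_0)=0$ supplies a mean-zero cancellation that lets one rewrite each difference as an integral of a quantity involving $\psi'-1$ rather than $\psi-\id$.'' I do not see how the barycenter identity effects such a conversion; it is an orthogonality against the constant function, not an integration by parts, and the integrands in $N$ and $D$ are nonlinear in $\varphi$. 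The paper does something quite different: for each boundary direction it pre- and post-composes $\psi$ by rotations (and one hyperbolic element) so that the normalized map $\psi_0^\eta$ fixes $1$ with derivative $1$ there; integrating the H\"older bound then gives the pointwise estimate $|\psi_0^\eta(\zeta)-\zeta|\le C\,|\zeta-1|^{\alpha+1}$ with $C$ proportional to $c_\alpha(\psi)$ (Proposition~\ref{setup}). The crucial quantitative conjugation estimate, Lemma~\ref{key}, then turns this into $\|h_t^{-1}\circ\psi_0^\eta\circ h_t-\id\|_\infty\lesssim C\,(1-t)^\alpha$, via a two-zone argument (near $1$ versus away from $1$) that controls the Poisson tails; the heuristic ``Poisson concentration $\times$ H\"older modulus'' alone does not suffice without this split.

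On the barycenter drift you correctly identify the obstacle, and your proposed target rescaling by $\gamma_{w_0}^{-1}$ is morally what the paper does, but again the paper is more concrete. Rather than work with a general implicit-function formula valid when the barycenter is off $0$, it first shows (Lemma~\ref{akari2}) that the conjugates $h_t^{-1}\circ\varphi_0^\eta\circ h_t$ converge uniformly in $\eta$ to the hyperbolic elements $h_{s_\eta}$ determined by $(\varphi_0)'_{\S1}(\eta)$, and then post-composes by explicit M\"obius maps $g_{t,\eta}$, $g_{\varepsilon,t,\eta}$ to force both barycenters to $0$; the lemmas of Section~\ref{5} (in particular Lemma~\ref{lower} and Lemma~\ref{rouche}) locate these barycenters and give the uniform lower bound on $|D|$ that you flag as the hard point. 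Only then is the simple Fourier-coefficient formula invoked, and the differences $|a_1-a_1'|$, $|a_{-1}-a_{-1}'|$, $|b-b'|$ are bounded directly by $\|\psi_t^\eta-\id\|_\infty\lesssim C(1-t)^\alpha$ from Lemma~\ref{key}. So your plan is on the right track, but the two steps above need to be replaced by (or shown equivalent to) the paper's pointwise normalization plus Lemma~\ref{key}, and the explicit barycenter relocation of Section~\ref{6}.
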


If $e(\psi \circ \varphi_0)=e(\psi)\circ e(\varphi_0)$, the proof would be easy. But, the barycentric extension $e$ is not
a homomorphism; it only has the conformal naturality.
We reduce the theorem to a simpler form by using the following facts:
\begin{enumerate}
\item
Composition of a rotation does not change the derivatives of circle diffeo\-morphisms;
\item
Post-composition of a M\"obius transformation does not change the complex dilatations of quasiconformal
homeomorphisms.
\end{enumerate}
Then we can normalize the situation so that $\varphi_0$ and $\psi$ fix $1$ and the derivative of $\psi$ at $1$ is $1$,
and we %consider the behavior of {circle diffeomorphisms at $1 \in \S1$} and
will estimate the complex dilatations on the real interval $[0,1) \subset \D$. Moreover, we 
have only to consider the convergence when $|z|$ is sufficiently close to $1$. Otherwise,
$2/(1-|z|^2)$ is bounded and 
the uniform convergence of
complex dilatations follows from the convergence $\psi \to \id$ by the arguments for the theorem in Section \ref{2.4}.
Thus the above theorem is reduced to the claim below.
The precise statement respecting the uniformity under conjugations by rotations will be given in Theorem \ref{main3} of Section \ref{6}.

Hereafter, we use the following notation.
Taking the lift $\widetilde \varphi:\R \to \R$ of $\varphi \in \Diff(\S1)$, we define its derivative 
along $\S1$ at $\zeta=e^{ix}$ ($-\pi < x \leq \pi$)
by $\varphi'_{\S1}(\zeta):=\widetilde \varphi'(x)$ .
The distance $d_{\S1}(\zeta,1)$ between $\zeta$ and $1$ along $\S1$ is then $|x|$.
The $\alpha$-H\"older constant of $\psi$ at $1$ is given by
$$
c_\alpha(\psi)(1)=\sup_{1 \neq \zeta \in \S1} \frac{|\psi'_{\S1}(\zeta)-\psi'_{\S1}(1)|}{d_{\S1}(\zeta,1)^\alpha}. 
$$ 

\begin{claim}\label{main3}
Assume that $\psi(1)=\varphi_0(1)=1$ and $\psi'_{\S1}(1)=1$.
If $c_\alpha(\psi)(1)$ converge 
to $0$, then  
$$
\sup_{t_0 \leq t<1}{ \left(\frac{2}{1-t^2}\right)^{\alpha}}
{|\mu_{e(\psi \circ \varphi_0)}(t)-\mu_{e(\varphi_0)}(t)|}\to 0  
$$
for some $t_0<1$ sufficiently close to $1$.
\end{claim}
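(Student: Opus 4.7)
The plan is to first apply the conformal naturality of the barycentric extension to move the evaluation point $t$ to $0$. Since $t \in [t_0, 1) \subset \R$, the M\"obius transformation $\gamma_t^{-1}(z) = (z+t)/(1+tz)$ has positive real derivative $(\gamma_t^{-1})'(0) = 1-t^2$, so the phase factor $\overline{\gamma'(z)}/\gamma'(z)$ appearing in the conformal naturality formula for complex dilatations equals $1$ at $z = 0$ and
$$
\mu_{e(\varphi)}(t) \;=\; \mu_{e(\varphi \circ \gamma_t^{-1})}(0).
$$
Writing $F_t := \varphi_0 \circ \gamma_t^{-1}$, the claim reduces to showing
$$
\sup_{t \in [t_0, 1)} (1-t^2)^{-\alpha} \bigl|\mu_{e(\psi \circ F_t)}(0) - \mu_{e(F_t)}(0)\bigr| \to 0.
$$

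Next, I would derive an explicit representation for $\mu_{e(F)}(0)$. Implicit differentiation of the barycenter equation $\xi_F(e(F)(0)) = 0$ in $z$ at $0$, after a harmless post-composition of $F$ by $\gamma_{e(F)(0)} \in \Mob(\D)$ so that $e(F)(0) = 0$ (this leaves $\mu_{e(F)}$ invariant), expresses $\partial_z e(F)(0)$ and $\partial_{\bar z} e(F)(0)$, and hence $\mu_{e(F)}(0)$, as a rational function of the first and second moments $\int_{\S1} F(\zeta)^a \bar\zeta^b |d\zeta|$ of $F$. The difference $\mu_{e(\psi \circ F_t)}(0) - \mu_{e(F_t)}(0)$ is then controlled by the corresponding differences of these moments, hence ultimately by $\int_{\S1}|\psi(F_t(\zeta)) - F_t(\zeta)| \, |d\zeta|$ and analogous quadratic integrals.

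The hypotheses $\psi(1) = \varphi_0(1) = 1$ and $\psi'_{\S1}(1) = 1$, together with $|\psi'_{\S1}(u) - 1| \le c_\alpha(\psi)(1)\, d_{\S1}(u,1)^\alpha$, yield by integration the pointwise bound $|\psi(u) - u| \le \tfrac{c_\alpha(\psi)(1)}{1+\alpha}\, d_{\S1}(u,1)^{1+\alpha}$ for $u \in \S1$. Substituting $u = F_t(\zeta)$ and using the Lipschitz bound for $\varphi_0$ with $\varphi_0(1)=1$ gives
$$
|\psi(F_t(\zeta)) - F_t(\zeta)| \;\le\; C\, c_\alpha(\psi)(1)\, d_{\S1}(\gamma_t^{-1}(\zeta), 1)^{1+\alpha}.
$$
The explicit formula $\tan(y/2) = \tfrac{1-t}{1+t}\tan(x/2)$ for $\gamma_t^{-1}(e^{ix}) = e^{iy}$ then gives the integral bound $\int_{\S1} d_{\S1}(\gamma_t^{-1}(\zeta),1)^{1+\alpha}|d\zeta| = O(1-t)$ as $t \to 1$, the main contribution coming from the stretched region near $\zeta = -1$. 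Combining, $|\mu_{e(\psi \circ F_t)}(0) - \mu_{e(F_t)}(0)| = O\bigl(c_\alpha(\psi)(1)(1-t)\bigr)$, and after dividing by $(1-t^2)^\alpha \asymp (1-t)^\alpha$ one obtains the uniform bound $C\, c_\alpha(\psi)(1)(1-t)^{1-\alpha}$, which is bounded on $[t_0, 1)$ and tends to $0$ with $c_\alpha(\psi)(1)$.

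The main obstacle is the degeneracy of the rational expression as $t \to 1$: since $F_t$ sends most of $\S1$ into a small arc near $1$, the barycenter $e(F_t)(0)$ approaches $1 = \varphi_0(1)$, so the normalizing M\"obius transformation $\gamma_{e(F_t)(0)}$ becomes singular and both the numerator and denominator in the formula for $\mu_{e(F_t)}(0)$ collapse. The genuine smallness of $\psi(F_t(\zeta)) - F_t(\zeta)$ must survive this collapse through a delicate cancellation reflecting the conformal naturality of $e$, and carrying out this bookkeeping uniformly in $t$ is the heart of the technical work of the paper.
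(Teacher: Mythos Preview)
Your overall strategy---conformal naturality to move the evaluation to $0$, then the explicit Fourier-coefficient formula for $\mu_{e(\varphi)}(0)$---matches the paper's. You also correctly identify the obstacle: the barycenter $e(F_t)(0)$ escapes to $\partial\D$, so normalizing by $\gamma_{e(F_t)(0)}$ is singular. But you do not resolve this obstacle; you only name it and hope for ``delicate cancellation.'' That is the gap.

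The step that fails is the sentence ``the difference $\mu_{e(\psi\circ F_t)}(0)-\mu_{e(F_t)}(0)$ is then controlled by \ldots\ $\int_{\S1}|\psi(F_t(\zeta))-F_t(\zeta)|\,|d\zeta|$.'' The formula $\mu_{e(\varphi)}(0)=(a_{-1}-\bar a_1 b)/(a_1-\bar a_{-1}b)$ is valid only when $e(\varphi)(0)=0$, so the $a_{\pm 1},b$ you must compare are the moments of the \emph{normalized} maps $\gamma_{e(F_t)(0)}\circ F_t$ and $\gamma_{e(\psi\circ F_t)(0)}\circ\psi\circ F_t$, not of $F_t$ and $\psi\circ F_t$ themselves. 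Your $L^1$ estimate $\int|\psi\circ F_t-F_t|=O(c_\alpha(\psi)(1)(1-t))$ is correct but does not control those normalized moments, because the normalizing M\"obius maps have derivative of order $(1-t)^{-1}$ precisely on the arc where $F_t$ concentrates. There is also no evident reason the denominators $|a_1-\bar a_{-1}b|$ stay bounded below.

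The paper's resolution avoids cancellation entirely by conjugating on \emph{both} sides: set $\varphi_t=h_t^{-1}\circ\varphi_0\circ h_t$ and $\psi_t=h_t^{-1}\circ\psi\circ h_t$ (note $h_t^{-1}=\gamma_t$, so this is a specific choice of post-composition, not the singular $\gamma_{e(F_t)(0)}$). Two lemmas then do the work. First, Lemma~\ref{akari2} shows $\varphi_t\to h_s$ uniformly, where $(1-s)/(1+s)=(\varphi_0)'_{\S1}(1)$; hence the barycenter of $\varphi_t$ stays in a fixed compact subset of $\D$, and the residual normalization sending it to $0$ is uniformly bounded. Second, Lemma~\ref{key} (the quantitative form of Earle's result) gives the sup-norm bound $\Vert\psi_t-\id\Vert_\infty\le 4^{\alpha+1}C(1-t)^\alpha$ with $C\asymp c_\alpha(\psi)(1)$. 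This $(1-t)^\alpha$ appears directly as a uniform bound, not via an integral, and exactly offsets the weight $(2/(1-t^2))^\alpha$. With both $\varphi_t$ and $\psi_t\circ\varphi_t$ uniformly close to the fixed M\"obius $h_s$, the normalized maps are uniformly close to $\id$, the denominators $|a_1-\bar a_{-1}b|$ are bounded below by a lemma of Douady--Earle, and the Fourier-coefficient differences are $O(c_\alpha(\psi)(1)(1-t)^\alpha)$ with a uniform implied constant.
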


The strategy for the proof is to use the conjugate by
$$
{h_t(z)=\frac{z+t}{1+tz}} \in \Mob(\D) \qquad (-1<t<1),
$$
which maps the real interval $[-1,1]$ onto itself with the end points fixed and sends $0$ to $t$.
Then the conformal naturality of the barycentric extension implies that
\begin{align*}
\mu_{e(\varphi_0)}(t)&=\mu_{e(h_t^{-1} \circ \varphi_0 \circ h_t)}(0)\frac{\overline{(h_t^{-1})'(0)}}{(h_t^{-1})'(0)};\\
\mu_{e(\psi \circ \varphi_0)}(t)&=\mu_{e(h_t^{-1} \circ \psi \circ \varphi_0 \circ h_t)}(0)\frac{\overline{(h_t^{-1})'(0)}}{(h_t^{-1})'(0)}.
\end{align*}
From these equalities, the term in the above claim we are going to estimate becomes
$$
|\mu_{e(\psi \circ \varphi_0)}(t)-\mu_{e(\varphi_0)}(t)|
=|\mu_{e(h_t^{-1} \circ \psi \circ \varphi_0 \circ h_t)}(0)-\mu_{e(h_t^{-1} \circ \varphi_0 \circ h_t)}(0)|.
$$

The advantage of this reduction is that we can explicitly represent 
$\mu_{e(\varphi)}(0)$ for $\varphi \in \QS(\S1)$ by using the Fourier coefficients for $\varphi$
(including the average of $-\varphi^2$)
if $e(\varphi)(0)=0$, that is, if
$$
a_0:=\xi_{\varphi}(0)=\frac{1}{2\pi}\int_{\S1}\varphi(\zeta)|d\zeta| =0.
$$
Under this condition, we have
$$
\mu_{e(\varphi)}(0)=\frac{a_{-1}-\overline{a_1}b}{a_1-\overline {a_{-1}}b}\ ;
$$
$$
a_1:=\frac{1}{2\pi}\int_{\S1} \bar \zeta \varphi(\zeta)\,|d\zeta|,\quad
a_{-1}:=\frac{1}{2\pi}\int_{\S1} \zeta \varphi(\zeta)\,|d\zeta|,\quad
b:=\frac{-1}{2\pi}\int_{\S1} \varphi(\zeta)^2\,|d\zeta|.
$$
This follows from \cite[p.28]{DE}. See also \cite[p.115]{P}.

However, there are also the following problems in these arguments:
\begin{enumerate}
\item
How can we deal with the weight $(2/(1-t^2))^{\alpha}$ when $t \to 1$.
\item
How can we estimate $\mu_{e(\varphi)}(0)$ even if { $e(\varphi)(0) \neq 0$}; the barycenters of
$h_t^{-1} \circ \varphi_0 \circ h_t$ and $h_t^{-1} \circ \psi \circ \varphi_0 \circ h_t$ are not necessarily zero.
\end{enumerate}
 
The solution to problem (1) is given by the precision of the following result due to Earle \cite{E}:
If $\psi(1)=1$ and $\psi'_{\S1}(1)=1$ then
$h_t^{-1} \circ \psi \circ h_t$ converge to $\rm id$ uniformly on $\S1$ as $t \to 1$.
This is because the conjugation by $h_t$ magnifies the mapping of $\psi$ near $1$, and since
the linear approximation of $\psi$ has slope $\psi'_{\S1}(1)=1$, it converges to the identity.
Earle gave a more precise statement for it ``with future applications in mind''.
We follow his arguments at the present by utilizing
the {$\alpha$-H\"older constant of $c_\alpha(\psi)(1)$.
%$$
%c_\alpha(\psi)(1):=\sup_{\zeta \in \S1} \frac{|\psi'_{\S1}(\zeta)-\psi'_{\S1}(1)|}{d_{\S1}(\zeta,1)^\alpha}. 
%$$ 
%as $\psi \to \rm id$ in $\Diff^{1+\alpha}(\S1)$. 
Integration of the definition of the $\alpha$-H\"older constant (Proposition \ref{setup}) yields
$$
|\psi(\zeta)-\zeta| \leq C|\zeta-1|^{\alpha+1} \quad (\zeta \in \S1),\quad
C=\frac{(\pi/2)^{\alpha+1}}{\alpha+1} \cdot c_\alpha(\psi)(1).
$$
This can make the above result by Earle
to be a quantitative statement as follows. The proof will be given in Section \ref{4}.

\begin{lemma}\label{key}
Suppose that $\psi \in \Homeo(\S1)$ satisfies 
$$
|\psi(\zeta)-\zeta| \leq C|\zeta-1|^{\alpha+1} \quad (\zeta \in \S1)
$$
for some constant $C \leq 1/4$.
Set {$\psi_t=h_t^{-1} \circ \psi \circ h_t$}. 
Choose any $\varepsilon>0$. If 
$1-t \leq \frac{1}{4}(\varepsilon/(4C))^{1/\alpha}$,
then
$|\psi_t(\zeta)-\zeta| \leq \varepsilon$
for every $\zeta \in \S1$.
\end{lemma}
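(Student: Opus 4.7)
The approach is a direct calculation via the M\"obius difference identity. Setting $\eta := h_t(\zeta) \in \S1$ and applying
$$
h_t^{-1}(w_1) - h_t^{-1}(w_2) = \frac{(1-t^2)(w_1-w_2)}{(1-tw_1)(1-tw_2)}
$$
with $w_1 = \psi(\eta)$ and $w_2 = \eta$, the quantity to estimate becomes
$$
|\psi_t(\zeta) - \zeta| = \frac{(1-t^2)\,|\psi(\eta)-\eta|}{|1-t\psi(\eta)|\,|1-t\eta|} \leq \frac{(1-t^2)\,C\,|\eta-1|^{\alpha+1}}{|1-t\psi(\eta)|\,|1-t\eta|}
$$
after using the hypothesis on $|\psi(\eta) - \eta|$.

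For the denominator I would use two ingredients. First, since $w \in \S1$ implies $|1-tw|^2 = (1-t)^2 + t|w-1|^2$, both factors admit a clean form. Second, the assumption $C \leq 1/4$ together with $|\eta-1| \leq 2$ gives, by the reverse triangle inequality,
$$
|\psi(\eta) - 1| \geq |\eta - 1|\bigl(1 - C|\eta-1|^\alpha\bigr) \geq |\eta - 1|(1 - C\cdot 2^\alpha) \geq |\eta - 1|/2.
$$
Consequently $|1-t\psi(\eta)|^2 \geq (1-t)^2 + t|\eta-1|^2/4 \geq |1-t\eta|^2/4$, yielding the combined lower bound
$$
|1-t\psi(\eta)|\,|1-t\eta| \geq \frac{1}{2}\bigl((1-t)^2 + t|\eta-1|^2\bigr).
$$
Writing $r = |\eta-1|$ and $s = 1-t$, the estimate reduces to
$$
|\psi_t(\zeta) - \zeta| \leq \frac{2(1-t^2)\,C\,r^{\alpha+1}}{s^2 + tr^2}.
$$

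Finally I would optimize over $r \in [0,2]$ by single-variable calculus: the right side attains its maximum at $r_\ast = s\sqrt{(\alpha+1)/((1-\alpha)t)}$ (interior to $[0,2]$ once $s$ is small, which the hypothesis forces), where the bound takes the form $K(\alpha, t)\,C\,s^\alpha$ with an explicit factor $K(\alpha, t)$ that stays bounded as $t \to 1$. The chosen threshold $s \leq \frac{1}{4}(\varepsilon/(4C))^{1/\alpha}$ then suffices to guarantee $|\psi_t(\zeta) - \zeta| \leq \varepsilon$ uniformly in $\zeta \in \S1$. The point I expect to be the main obstacle is the lower bound for the second denominator factor $|1-t\psi(\eta)|$: splitting prematurely into ``$r$ small'' versus ``$r$ large'' cases loses a factor and fails to deliver uniform $(1-t)^\alpha$ decay, whereas keeping the sum $s^2 + tr^2$ intact and optimizing only at the end recovers the sharp power. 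Tracking the precise numerical constant in the stated threshold may require the slightly sharper estimate $|1-t\psi(\eta)| \geq |1-t\eta|/2$ rather than any of its cruder one-sided substitutes.
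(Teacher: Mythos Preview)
Your plan is sound and would prove the lemma, but the paper takes a different and in fact simpler route. After arriving at exactly your starting estimate
\[
|\psi_t(\zeta)-\zeta| \leq \frac{2C(1-t)\,|\omega-1|^{\alpha+1}}{|1-t\psi(\omega)|\,|1-t\omega|},
\]
the paper does \emph{not} keep the full quadratic $s^2+tr^2$ and optimize by calculus. Instead it splits into the two cases $|\omega-1|\leq \delta$ and $|\omega-1|\geq \delta$ with $\delta=(\varepsilon/(4C))^{1/\alpha}$. In the first case one uses the crude bounds $|1-t\psi(\omega)|\geq 1-t$ and $|\omega-1|\leq 2|1-t\omega|$ to obtain $|\psi_t(\zeta)-\zeta|\leq 4C|\omega-1|^\alpha\leq \varepsilon$. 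In the second case one instead uses $|1-\psi(\omega)|\leq 2|1-t\psi(\omega)|$ together with your own inequality $|1-\psi(\omega)|\geq |\omega-1|/2$ to obtain $|\psi_t(\zeta)-\zeta|\leq 16C(1-t)|\omega-1|^{\alpha-1}\leq 16C(1-t)\delta^{\alpha-1}$, which is $\leq \varepsilon$ precisely when $1-t\leq \delta/4$. This is where the stated threshold comes from, with no calculus and no $\alpha$-dependent optimization constant.

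So your intuition that a small/large split ``loses a factor and fails to deliver uniform $(1-t)^\alpha$ decay'' is mistaken: the paper's split delivers the exact constant $4^{\alpha+1}$ on the nose, whereas your calculus maximization produces a factor $K(\alpha,t)=(1+t)(1-\alpha)\bigl((1+\alpha)/((1-\alpha)t)\bigr)^{(1+\alpha)/2}$ that then has to be checked against $4^{\alpha+1}$, and moreover degenerates as $t\to 0$ (so you would also need a separate argument when the threshold allows $t$ far from $1$, or when $r_\ast$ exits $[0,2]$). Your route works and is arguably more systematic, but the case split is both shorter and yields the numerical threshold without further bookkeeping.
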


This asserts that $\psi_t$ is uniformly close to $\rm id$ {in the order of 
$4^{\alpha+1}C(1-t)^\alpha$}
as $t \to 1$.
Hence this order offsets the problematic weight $(2/(1-t^2))^{\alpha}$ and moreover
the convergence $C \to 0$, which comes from the assumption $c_\alpha(\psi)(1) \to 1$, supports our Theorem \ref{main2}.

Towards the solution to problem (2),
we consider the barycenter $e(\varphi_t)(0)$ of the conjugate {$\varphi_t=h_t^{-1} \circ \varphi_0 \circ h_t$}.
Even if $e(\varphi_t)(0) \neq 0$, we can estimate the Fourier coefficients for $\varphi_t$ uniformly {if
$e(\varphi_t)(0)$ is in a compact subset of $\D$}.

For the base point $\varphi_0 \in \Diff^{1+\alpha}(\S1)$, the derivative $(\varphi_0)'_{\S1}(1)$ is not necessarily $1$.
In this case, the close-up of the behavior of $\varphi_0$ in a neighborhood of $1$ by the conjugation of $h_t$
converges to the M\"obius transformation $h_s$ satisfying $(h_s)'_{\S1}(1)=(\varphi_0)'_{\S1}(1)$. 
More concretely, this is given in the following claim.
The corresponding statement respecting the uniformity under normalization by rotation will be given in Lemma \ref{akari2}.

\begin{claim}
%Suppose that $\varphi_0 \in \Diff^1(\S1)$ satisfies 
%$\varphi_0(1)=1$. 
For $(\varphi_0)'_{\S1}(1)=\ell>0$,
take $h_s \in \Mob(\S1)$ with $(1-s)/(1+s)=\ell$.
Then $\varphi_t$ converge uniformly to $h_s$ on $\S1$.
%for any $\varepsilon_0 \in (0,2]$, there exists $\delta_0>0$ depending only on $\varepsilon_0$ and $\ell$
%such that if $1-t \leq \delta_0$ then
%$$
%|\varphi_t(\zeta)-h_s(\zeta)| \leq \varepsilon_0
%$$
%for every $\zeta \in \S1$.
\end{claim}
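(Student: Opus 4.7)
The plan is to reduce this claim to the version of Earle's result quoted earlier in the paper---namely, that $h_t^{-1}\circ\psi\circ h_t\to\id$ uniformly on $\S1$ whenever $\psi(1)=1$ and $\psi'_{\S1}(1)=1$---by explicitly factoring out the M\"obius piece $h_s$ from $\varphi_0$.

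First, I would set $\widehat\varphi_0:=h_s^{-1}\circ\varphi_0$. Since $h_s$ is a smooth M\"obius transformation, $\widehat\varphi_0$ again belongs to $\Diff^{1+\alpha}(\S1)$; moreover $\widehat\varphi_0(1)=h_s^{-1}(1)=1$, and the chain rule together with the identity $(h_s)'_{\S1}(1)=(1-s)/(1+s)=\ell$ yields
$$
(\widehat\varphi_0)'_{\S1}(1)=(h_s^{-1})'_{\S1}(1)\cdot(\varphi_0)'_{\S1}(1)=\tfrac{1}{\ell}\cdot\ell=1.
$$

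The key observation is that $\{h_u\}_{-1<u<1}$ is a one-parameter subgroup of $\Mob(\D)$, since all its elements share the same pair of boundary fixed points $\pm 1$; in particular $h_s$ and $h_t$ commute. This permits the rearrangement
$$
\varphi_t=h_t^{-1}\circ h_s\circ\widehat\varphi_0\circ h_t=h_s\circ\bigl(h_t^{-1}\circ\widehat\varphi_0\circ h_t\bigr).
$$
Now $\widehat\varphi_0$ meets the hypotheses of the Earle-type statement above, so $h_t^{-1}\circ\widehat\varphi_0\circ h_t\to\id$ uniformly on $\S1$ as $t\to 1$ (if one prefers a quantitative bound, the integration of the $\alpha$-H\"older property of $(\widehat\varphi_0)'_{\S1}$ at $1$ gives $|\widehat\varphi_0(\zeta)-\zeta|\leq C|\zeta-1|^{\alpha+1}$ and one invokes Lemma \ref{key}). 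Post-composition by the fixed---hence Lipschitz---map $h_s$ preserves uniform convergence on $\S1$, so we conclude $\varphi_t\to h_s$ uniformly on $\S1$.

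The main obstacle is conceptual rather than technical: spotting the commutativity step. Once one recognizes that the $h_s$ factor slides past $h_t^{\pm 1}$ because they share the same two fixed points on $\S1$, the claim collapses onto the already-established Earle estimate. Without this factorization, a direct attack---say, Taylor-expanding $\varphi_0$ near $\zeta=1$ in tangent-of-half-angle coordinates---gives good control near $\zeta=1$ but breaks down near $\zeta=-1$, where both $\varphi_t(\zeta)$ and $h_s(\zeta)$ approach $-1$ and one must separately argue that their chordal difference remains small; the commutativity route bypasses this case analysis entirely.
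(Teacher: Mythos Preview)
Your proof is correct. The commutativity of $h_s$ and $h_t$ (both lying in the one-parameter hyperbolic subgroup fixing $\pm 1$) is exactly right, and once $\widehat\varphi_0=h_s^{-1}\circ\varphi_0$ is seen to satisfy $\widehat\varphi_0(1)=1$ and $(\widehat\varphi_0)'_{\S1}(1)=1$, Earle's result applies and the fixed Lipschitz map $h_s$ transports the uniform convergence. One small caveat: the optional appeal to Lemma~\ref{key} only works verbatim when the resulting constant $C$ is at most $1/4$, but since you rely primarily on the qualitative Earle statement this does not affect the argument.

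The paper takes a different route. Rather than factoring out $h_s$ and reducing to the $\ell=1$ case, it estimates $|\varphi_t(\zeta)-h_s(\zeta)|$ directly (see Lemma~\ref{akari2}): write $\omega=h_t(\zeta)$, express the difference via the explicit formula for $h_t^{-1}$, prove a linear-approximation claim $|\varphi_0(\omega)-h_s(\omega)|\leq\tilde\varepsilon\,|h_s(\omega)-1|$ near $1$, and then split into the cases $|h_s(\omega)-1|\leq\tilde\delta$ and $|h_s(\omega)-1|\geq\tilde\delta$. Your factorization is cleaner and avoids the case analysis you yourself flagged near $\zeta=-1$. What the paper's direct computation buys is explicit control that is uniform over the whole family $\{\varphi_0^\eta\}_{\eta\in\S1}$ obtained by rotation---this uniformity in $\eta$ is what is actually needed in Theorem~\ref{main3}. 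Your argument could be upgraded to give that uniformity too (one checks that the H\"older constants of $h_{s_\eta}^{-1}\circ\varphi_0^\eta$ at $1$ are bounded uniformly in $\eta$, since $s_\eta$ stays in a compact subset of $(-1,1)$), but as written it establishes precisely the stated Claim for a single $\varphi_0$.
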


Fix $t$ sufficiently close to $1$.
Then the claim says that $\varphi_t$ is uniformly close to $h_s$.
Under this condition, we can expect that
the barycenter $e(\varphi_t)(0)$ should be close to
$e(h_s)(0)=s$, which is to be verified in Section \ref{6}. Hence, for some $g_1 \in \Mob(\D)$
close to $h_s$ (written as $g_1 \fallingdotseq h_s$), we will have
$$
e(g_1^{-1} \circ \varphi_t)(0)=0.
$$
%=\frac{1}{2\pi}\int_{\S1} \frac{\zeta+s}{1+s \zeta}|d\zeta|=s.
Similarly, since $\psi_t=h_t^{-1} \circ \psi \circ h_t$ tends to $\id$ by assumption,
$$
\psi_t \circ \varphi_t=h_t^{-1} \circ \psi \circ \varphi_0 \circ h_t
$$
is close to $h_s$. Hence, for some $g_2\ (\fallingdotseq h_s) \in \Mob(\D)$, 
$$
e(g_2^{-1} \circ \psi_t \circ \varphi_t)(0)=0.
$$

Now we represent the complex dilatations as
\begin{align*}
\mu_{e(\varphi_t)}(0)&=\mu_{e(g_1^{-1} \circ \varphi_t)}(0)=\frac{a_{-1}-\overline{a_1}b}{a_1-\overline {a_{-1}}b};\\
\mu_{e(\psi_t \circ \varphi_t)}(0)&=\mu_{e(g_2^{-1} \circ \psi_t\circ \varphi_t)}(0)=\frac{a'_{-1}-\overline{a'_1}b}{a'_1-\overline {a'_{-1}}b'}.
\end{align*}
Here $a_1, a_{-1}, b$ are the Fourier coefficients for $g_1^{-1} \circ \varphi_t$ and $a'_1, a'_{-1}, b'$ are 
the Fourier coefficients for $g_2^{-1} \circ \psi_t \circ \varphi_t$. 
By using the fact that $g_1 \fallingdotseq g_2$, we can estimate
$$
{|\mu_{e(\psi_t \circ \varphi_t)}(0)-\mu_{e(\varphi_t)}(0)|}
$$
in terms of the approximation of $h_s$ by $g_1$ and $g_2$.
This will be carried out precisely in Section \ref{6}.

\medskip
\section{Convergence of conjugation of circle diffeomorphisms}\label{4}

In this section, we prepare certain results on the convergence of
conjugation of circle diffeomorphisms by the canonical M\"obis transformations,
which is inspired by the paper of Earle \cite{E}. These are necessary for
the proof of our main theorem concerning the solution of the problems mentioned in
the previous section.

In what follows, it is convenient to regard $\S1$ being parametrized by arc length. 
For $\zeta_1, \zeta_2 \in \S1$, the the length of the shorter
circular arc connecting them is denoted by
$d_{\S1}(\zeta_1,\zeta_2)$. 
By the universal cover $\zeta=e^{ix}:\R \to \S1$, this is given by
$$
d_{\S1}(\zeta_1,\zeta_2)=\min\{|x_1-x_2| \mid \zeta_1=e^{ix_1},\ \zeta_2=e^{ix_2}\} \leq \pi.
$$
For $\varphi_1,\varphi_2 \in \Homeo(\S1)$, we set
$$
\Vert \varphi_1- \varphi_2 \Vert_{\S1}=\sup_{\zeta \in \S1}d_{\S1}(\varphi_1(\zeta),\varphi_2(\zeta)).
$$
Define $\widetilde \varphi: \R \to \R$ to be a lift of $\varphi \in \Homeo(\S1)$
with $\exp(i \widetilde \varphi(x))=\varphi(e^{ix})$.
For $\varphi \in \Diff(\S1)$,
its derivative along $\S1$ at $\zeta=e^{ix}$ is defined by $\varphi'_{\S1}(\zeta):=\widetilde \varphi'(x)$.
The $\alpha$-H\"older constant of $\varphi$ at $\eta=e^{iy} \in \S1$ is given by
$$
c_\alpha(\varphi)(\eta)=\sup_{\zeta \in \S1} \frac{|\varphi'_{\S1}(\zeta)-\varphi'_{\S1}(\eta)|}{d_{\S1}(\zeta,\eta)^\alpha}
=\sup_{y \neq x \in \R} \frac{|\widetilde \varphi'(x)-\widetilde \varphi'(y)|}{|x-y|^\alpha}. 
$$ 
%$$
%\varphi'_{\S1}(\zeta)=\lim_ {x' \to x} 
%\frac{\widetilde \varphi(x')-\widetilde \varphi(x)}{x'-x}.
%$$
%For each $\zeta \in \S1$, by
%taking suitable lifts $\widetilde \varphi_1, \widetilde \varphi_2$ for $\varphi_1,\varphi_2 \in \Homeo(\S1)$, we have
%$$
%d_{\S1}(\varphi_1(\zeta),\varphi_2(\zeta))=|\widetilde \varphi_1(x)-\widetilde \varphi_2(x)|.
%$$
%Moreover, we set
%$$
%\Vert \varphi_1- \varphi_2 \Vert_{\S1}=\sup_{\zeta \in \S1}d_{\S1}(\varphi_1(\zeta),\varphi_2(\zeta)).
%$$
%This is actually the circular arc length between $\varphi_1(\zeta)$ and $\varphi_2(\zeta)$ and
%explicitly given by
%$$
%|\varphi_1(\zeta)-\varphi_2(\zeta)|_*
%$$

First, we prepare an elementary fact on the integration of the $\alpha$-H\"older continuity condition at $1 \in \S1$.

\begin{proposition}\label{setup}
Suppose that $\psi \in \Diff(\S1)$ with $\psi(1)=1$ and $\psi'_{\S1}(1)=1$ satisfies
$$
|\psi'_{\S1}(\zeta)-1| \leq c d_{\S1}(\zeta,1)^\alpha
$$
for some constant $c \geq 0$. Then 
$$
|\psi(\zeta)-\zeta| \leq \frac{c(\pi/2)^{\alpha+1}}{\alpha+1}|\zeta-1|^{\alpha+1}.
$$
\end{proposition}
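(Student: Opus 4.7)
The plan is to lift everything to the universal cover, perform a one-dimensional integration of the Hölder bound, and then translate back to chord-length estimates on $\S1$. Concretely, let $\widetilde{\psi}:\R \to \R$ be the lift of $\psi$ with $\widetilde{\psi}(0)=0$, which exists since $\psi(1)=1$, and note that the normalization $\psi'_{\S1}(1)=1$ means $\widetilde{\psi}'(0)=1$. For $\zeta=e^{ix}$ with $|x|\leq \pi$ the arc distance is $d_{\S1}(\zeta,1)=|x|$, so the hypothesis becomes $|\widetilde{\psi}'(x)-1|\leq c|x|^\alpha$ for all $x\in[-\pi,\pi]$.

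Next I would integrate this estimate from $0$ to $x$. Writing $\widetilde{\psi}(x)-x=\int_0^x (\widetilde{\psi}'(s)-1)\,ds$, the bound gives
$$
|\widetilde{\psi}(x)-x|\leq \int_0^{|x|} c s^\alpha\,ds=\frac{c\,|x|^{\alpha+1}}{\alpha+1}.
$$
Since $\psi(\zeta)=e^{i\widetilde{\psi}(x)}$ and $\zeta=e^{ix}$, the elementary inequality $|e^{ia}-e^{ib}|=2|\sin((a-b)/2)|\leq |a-b|$ yields
$$
|\psi(\zeta)-\zeta|\leq |\widetilde{\psi}(x)-x|\leq \frac{c\,|x|^{\alpha+1}}{\alpha+1}.
$$

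Finally, I would convert the arc length $|x|$ into the chord length $|\zeta-1|$. Using $|\zeta-1|=2|\sin(x/2)|$ together with the Jordan inequality $|\sin(x/2)|\geq (2/\pi)|x/2|$ valid on $|x|\leq\pi$, one obtains $|x|\leq (\pi/2)|\zeta-1|$. Substituting this into the previous display gives exactly
$$
|\psi(\zeta)-\zeta|\leq \frac{c\,(\pi/2)^{\alpha+1}}{\alpha+1}|\zeta-1|^{\alpha+1},
$$
as required. There is no real obstacle here; the only point demanding a little care is making sure one uses the proper relation between arc length and chord length (and the range $|x|\leq \pi$ where the Jordan-type inequality is applicable), but this is routine.
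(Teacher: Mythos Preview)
Your proof is correct and follows essentially the same approach as the paper: lift to $\R$, integrate the H\"older bound on the derivative to control $|\widetilde\psi(x)-x|$, pass from arc length to chord length via $|\psi(\zeta)-\zeta|\leq|\widetilde\psi(x)-x|$, and then use $|x|\leq(\pi/2)|\zeta-1|$. The only difference is cosmetic---the paper writes the integration as a two-sided inequality rather than bounding the absolute value directly---so there is nothing to add.
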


\begin{proof}
The lift $\widetilde \psi$ with $\widetilde \psi(0)=0$ satisfies
$|\widetilde \psi'(x)-1| \leq c|x|^\alpha$ for $\zeta=e^{ix}$ $(-\pi<x \leq \pi)$. This can be written as
$$
1-c|x|^\alpha \leq \widetilde \psi'(x) \leq 1+c|x|^\alpha.
$$
Then the integration from $0$ to $x$ yields
$$
x-\frac{c}{\alpha+1}|x|^{\alpha+1} \leq \widetilde \psi(x) \leq x+\frac{c}{\alpha+1}|x|^{\alpha+1}.
$$
Hence
$$
|\psi(\zeta)-\zeta| \leq |\widetilde \psi(x)-x| \leq \frac{c}{\alpha+1}|x|^{\alpha+1} 
\leq \frac{c}{\alpha+1}\{(\pi/2)|\zeta-1|\}^{\alpha+1}
$$
for $\zeta=e^{ix}$, which is the required inequality.
\end{proof}

For $t \in (-1,1)$, we utilize a particular M\"obius transformation of $\D$ given by
$$
h_t(z)=\frac{z+t}{1+tz},
$$
which maps the real interval $[-1,1]$ onto itself with the end points fixed and sends $0$ to $t$.
The following lemma, mentioned in Section \ref{3}, is an application of the arguments in Earle \cite[Theorem 2]{E}
%which might be in his mind, 
to an orientation-preserving self-homeomorphism $\psi \in \Homeo(\S1)$ approximating the identity
with a prescribed order at the fixed point $1 \in \S1$. The conjugate of $\psi$ by $h_t$ expands the local behavior of $\psi$ near $1$
to the global $\S1$.

\newtheorem{keylemma}{Lemma}
\renewcommand{\thekeylemma}{\ref{key}}
\begin{keylemma}
Suppose that $\psi \in \Homeo(\S1)$ satisfies 
%$d_{\S1}(\psi(\zeta),\zeta)\leq cd_{\S1}(\zeta,1)^{\alpha+1}$
$$
|\psi(\zeta)-\zeta| \leq C|\zeta-1|^{\alpha+1}
$$
for some constant $C \leq 1/4$.
%and $\alpha \in (0,1)$.
Set $\psi_t=h_t^{-1} \circ \psi \circ h_t$. 
Choose any $\varepsilon>0$. If 
$1-t \leq \frac{1}{4}(\varepsilon/(4C))^{1/\alpha}$,
then
%$d_{\S1}(\psi_t(\zeta),\zeta)\leq \varepsilon$
$|\psi_t(\zeta)-\zeta| \leq \varepsilon$
for every $\zeta \in \S1$.
\end{keylemma}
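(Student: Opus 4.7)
The plan is to write $\psi_t(\zeta)-\zeta$ exactly via the M\"obius formula for $h_t^{-1}$, and then estimate the numerator and denominator separately. Setting $\eta=h_t(\zeta)\in\S1$, a short direct calculation yields the identity
\[
\psi_t(\zeta)-\zeta=\frac{(1-t^2)\bigl(\psi(\eta)-\eta\bigr)}{\bigl(1-t\psi(\eta)\bigr)(1-t\eta)},
\]
together with the explicit expressions $|\eta-1|=(1-t)|\zeta-1|/|1+t\zeta|$ and $|1-t\eta|=(1-t^2)/|1+t\zeta|$. The hypothesis on $\psi$ will control the numerator through $|\psi(\eta)-\eta|\leq C|\eta-1|^{\alpha+1}$, so everything reduces to a lower bound for $|1-t\psi(\eta)|$.

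The main obstacle is precisely this lower bound. The obvious estimate $|1-t\psi(\eta)|\geq 1-t$ is too crude near $\zeta=-1$, where $|1+t\zeta|$ shrinks to $1-t$ and the resulting estimate fails to vanish as $t\to 1$. I plan instead to prove
\[
|1-t\psi(\eta)|\geq \tfrac{1}{2}|1-t\eta|
\]
uniformly in $\zeta\in\S1$. By the triangle inequality this reduces to $t|\psi(\eta)-\eta|\leq \tfrac{1}{2}|1-t\eta|$, and after substituting the identities above one is left with the pointwise estimate
\[
2tC(1-t)^{\alpha}|\zeta-1|^{\alpha+1}\leq (1+t)|1+t\zeta|^{\alpha}.
\]
An elementary one-variable maximization shows that $|\zeta-1|^{\alpha+1}/|1+t\zeta|^{\alpha}$ attains its supremum on $\S1$ at $\zeta=-1$ with value $2^{\alpha+1}/(1-t)^{\alpha}$, so the required inequality follows from the hypothesis $C\leq 1/4$ together with $t\geq 0$ (which is automatic from the smallness of $1-t$).

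With the denominator bound in hand, the identity gives
\[
|\psi_t(\zeta)-\zeta|\leq \frac{2C(1-t)^{\alpha}|\zeta-1|^{\alpha+1}|1+t\zeta|^{1-\alpha}}{1+t},
\]
and the trivial bounds $|\zeta-1|\leq 2$ and $|1+t\zeta|\leq 1+t$ collapse this further to $2^{\alpha+2}C(1-t)^{\alpha}/(1+t)^{\alpha}\leq 4^{\alpha+1}C(1-t)^{\alpha}$, using $(1+t)^{\alpha}\geq 1$ and $2^{\alpha+2}\leq 4^{\alpha+1}$ for $\alpha\geq 0$. The assumed smallness $1-t\leq\tfrac{1}{4}(\varepsilon/(4C))^{1/\alpha}$ is then exactly equivalent to $4^{\alpha+1}C(1-t)^{\alpha}\leq\varepsilon$, which delivers the conclusion.
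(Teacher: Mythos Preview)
Your proof is correct and takes a genuinely different route from the paper's. Both arguments begin with the same identity for $\psi_t(\zeta)-\zeta$ in terms of $\eta=h_t(\zeta)$, and both must control the denominator factor $|1-t\psi(\eta)|$. The paper handles this by a case split on whether $|\eta-1|$ lies below or above the threshold $\delta=(\varepsilon/(4C))^{1/\alpha}$: for $|\eta-1|\leq\delta$ the crude bound $|1-t\psi(\eta)|\geq 1-t$ together with $|\eta-1|\leq 2|1-t\eta|$ already gives $|\psi_t(\zeta)-\zeta|\leq 4C|\eta-1|^\alpha\leq\varepsilon$; for $|\eta-1|\geq\delta$ the paper instead uses $|1-t\psi(\eta)|\geq\tfrac12|1-\psi(\eta)|\geq\tfrac14|\eta-1|$ (the last step from $C\leq 1/4$) and then exploits $\alpha-1<0$ to bound $|\eta-1|^{\alpha-1}\leq\delta^{\alpha-1}$. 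Your argument avoids the dichotomy entirely by establishing the single uniform bound $|1-t\psi(\eta)|\geq\tfrac12|1-t\eta|$, which you obtain by substituting the explicit formulas for $|\eta-1|$ and $|1-t\eta|$ in terms of $\zeta$ and maximizing $|\zeta-1|^{\alpha+1}/|1+t\zeta|^{\alpha}$ over $\S1$. This yields one clean chain of inequalities and in fact the sharper constant $2^{\alpha+2}$ before you relax it to $4^{\alpha+1}$ to match the hypothesis exactly. The paper's approach stays closer to Earle's original near/far split, while yours is more direct; both rely on $C\leq 1/4$ at the analogous point and both implicitly use $\alpha\leq 1$ (your step $2^{\alpha+2}tC\leq 1+t$ and the paper's step $|\eta-1|^{\alpha-1}\leq\delta^{\alpha-1}$), which is harmless in the paper's setting $\alpha\in(0,1)$.
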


\begin{proof}
Set $\omega=h_t(\zeta)$. Then
\begin{align*}
|\psi_t(\zeta)-\zeta|
&=|h_t^{-1}(\psi(\omega))-h_t^{-1}(\omega)|\\
&= \frac{(1-t^2)\,|\psi(\omega)-\omega|}{|1-t\psi(\omega)|\cdot|t\omega-1|}
\leq \frac{2C(1-t)\,|\omega-1|^{\alpha+1}}{|1-t\psi(\omega)|\cdot|t\omega-1|}.
\end{align*}
By using $1-t \leq |1-t\psi(\omega)|$ and $|\omega-1| \leq 2|t\omega-1|$, we have
$$
|\psi_t(\zeta)-\zeta| \leq 4C|\omega-1|^\alpha.
$$
Set $\delta=(\varepsilon/(4C))^{1/\alpha}$. Then $4C|\omega-1|^\alpha \leq \varepsilon$ if 
$|\omega-1| \leq \delta$. Hence we have only to consider the case of $|\omega-1| \geq \delta$.

As before, we have
$$
|\psi_t(\zeta)-\zeta|
\leq \frac{2C(1-t)\,|\omega-1|^{\alpha+1}}{|1-t\psi(\omega)|\cdot|t\omega-1|}
\leq \frac{4C(1-t)\,|\omega-1|^{\alpha}}{|1-t\psi(\omega)|}.
$$ 
In this time, we use $|1-\psi(\omega)| \leq 2|1-t\psi(\omega)|$. Moreover, since $C \leq 1/4$,
\begin{align*}
|1-\psi(\omega)| &\geq |\omega-1|-|\psi(\omega)-\omega|\\
&\geq |\omega-1|(1-C|\omega-1|^\alpha) \geq |\omega-1|/2.
\end{align*}
Plugging these estimates into the above inequality, we conclude
$$
|\psi_t(\zeta)-\zeta| \leq 16C(1-t)|\omega-1|^{\alpha-1}\leq 16C(1-t)\delta^{\alpha-1}.
$$
If $1-t \leq \frac{1}{4}(\varepsilon/(4C))^{1/\alpha}$, then using $\delta=(\varepsilon/(4C))^{1/\alpha}$ we have
$$
16C(1-t)\delta^{\alpha-1} \leq \varepsilon.
$$
This completes the proof of the assertion.
\end{proof}

In the later application, we consider the situation where the constant $c$ in Proposition \ref{setup},
which will be taken as the $\alpha$-H\"older constant $c_\alpha(\psi)(1)$ of $\psi_{\S1}'$ at $1 \in \S1$, can be arbitrarily small.
Then we can choose the constant $C$ in Lemma \ref{key} as 
$$
C=\frac{c(\pi/2)^{\alpha+1}}{\alpha+1} \leq \frac{1}{4},
$$
and apply the consequence of this lemma.

%\begin{proposition}\label{akari1}
%Let $\varphi \in \Diff^1(\S1)$ with $\varphi'_{\S1}(1)=\ell>0$.
%Set $\varphi_t=h_t^{-1} \circ \varphi \circ h_t$.  
%Then $\varphi_t$ converge to $h_{s_0}$ uniformly on $\S1$ as $t \to 1$
%for $s_0 \in (-1,1)$ with $(1-s_0)/(1+s_0)=\ell$.
%\end{proposition}
%
%\begin{proof}
%It is known that every circle diffeomorphism extends to an asymptotically conformal automorphism of $\D$.
%Let $f \in \AC(\D)$ be such an extension of $\varphi$. Assuming $h_t \in \Mob(\D)$, we set
%$f_t=h_t^{-1}f h_t$ for $t \in (-1,1)$ ($f_0=f$). Since the complex dilatation $\mu_{f_t}$ of $f_t$ satisfies
%$\Vert \mu_{f_t} \Vert_\infty=\Vert \mu_{f_0} \Vert_\infty$, $\{f_t\}$ is a uniformly quasiconformal family.
%Also, they are normalized so that the boundary extension of each $f_t$ (which is $\varphi_t$ but
%denoted by the same symbol) satisfies $f_t(1)=1$, $(f_t)'_{\S1}=\ell$ and $f_t(-1)$ is uniformly close to $-1$.
%Hence $\{f_t\}$ is a normal family.
%
%On the other hand, since $\mu_f=\mu_{f_0}$ vanishes on the boundary, we see that
%$\mu_{f_t}$ converge to $0$ at almost every point of $\D$ as $t \to 1$.
%The normality and this property imply that $\lim_{t \to 1} f_t$ exists, which is 
%some $h \in \Mob(\D)$. Since $h$ also satisfies $h(1)=1$, $h'(1)=\ell$ and $h(-1)=-1$,
%it coincides with $h_{s_0}$. Taking the boundary extension of $f_t$, we conclude that
%$\varphi_t$ converge to $h_{s_0}$ as $t \to 1$.
%\end{proof}

We denote the rotation sending $1$ to $\eta \in \S1$ by $r_\eta \in \Mob(\S1)$.
The composition of rotations does not change the derivative at any point $\eta \in \S1$ of a diffeomorphism $\varphi_0 \in \Diff^1(\S1)$.
Hence we may assume that it fixes $1$. The previous lemma dealt with the case of its derivative at $1$ is $1$.
The following lemma treats the general case and asserts the convergence of the conjugate by $h_t$ to 
an appropriate M\"obius transformation.

\begin{lemma}\label{akari2}
Let $\varphi_0 \in \Diff^1(\S1)$ and $\eta \in \S1$.
Take rotations $r_{\eta}, r_{\varphi_0(\eta)} \in \Mob(\S1)$ 
%such that $r_{\eta}(1)=\eta$ and $r^{-1}_{\varphi(\eta)}(\varphi(\eta))=1$
and set 
$$
\varphi_0^\eta=r^{-1}_{\varphi_0(\eta)} \circ \varphi_0 \circ r_{\eta},
$$
which fixes $1 \in \S1$.
Let $(\varphi_0^\eta)'_{\S1}(1)=\ell_\eta>0$ and
take $h_{s_\eta} \in \Mob(\S1)$ such that $s_\eta \in (-1,1)$ satisfies $(1-s_\eta)/(1+s_\eta)=\ell_\eta$.
Set 
$$
\varphi_t^\eta=h_t^{-1} \circ \varphi_0^\eta \circ h_t.
$$ 
Then,
for any $\varepsilon_0 \in (0,2]$, there exists $\delta_0>0$ depending only on $\varepsilon_0$ and $\varphi_0$
but not depending on $\eta \in \S1$
such that if $1-t \leq \delta_0$ then
$$
|\varphi_t^\eta(\zeta)-h_{s_\eta}(\zeta)| \leq \varepsilon_0
$$
for every $\zeta \in \S1$ and for every $\eta \in \S1$.
\end{lemma}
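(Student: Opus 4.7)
The plan is to reduce the statement to Lemma \ref{key} after stripping off the ``M\"obius tangent'' $h_{s_\eta}$ from $\varphi_0^\eta$ at $1$. The key algebraic fact is that $\{h_s\}_{-1 < s < 1}$ is a one-parameter subgroup of $\Mob(\D)$ (the hyperbolic translations along $[-1, 1]$); a direct calculation gives
\[
h_s \circ h_t = h_{(s+t)/(1+st)} = h_t \circ h_s.
\]
Since $h_{s_\eta}(1) = 1$ and $(h_{s_\eta})'_{\S1}(1) = (1 - s_\eta)/(1 + s_\eta) = \ell_\eta$, the map $\psi^\eta := h_{s_\eta}^{-1} \circ \varphi_0^\eta$ fixes $1$ with derivative $1$ there. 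Commutation then yields
\[
\varphi_t^\eta = h_t^{-1} \circ h_{s_\eta} \circ \psi^\eta \circ h_t = h_{s_\eta} \circ \psi_t^\eta, \qquad \psi_t^\eta := h_t^{-1} \circ \psi^\eta \circ h_t,
\]
so that $|\varphi_t^\eta(\zeta) - h_{s_\eta}(\zeta)| \leq L \cdot |\psi_t^\eta(\zeta) - \zeta|$, where $L$ is a Lipschitz constant on $\S1$ for $h_{s_\eta}$. Because $\ell_\eta = \widetilde{\varphi_0}'(y)$ (for $\eta = e^{iy}$) ranges over a compact subinterval of $(0, \infty)$ by continuity of $\widetilde{\varphi_0}'$ on $\S1$, the parameter $s_\eta$ lies in a compact subinterval of $(-1, 1)$ and $L$ may be chosen uniformly in $\eta$.

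It remains to show that $\sup_{\zeta \in \S1} |\psi_t^\eta(\zeta) - \zeta| \to 0$ as $t \to 1$, uniformly in $\eta \in \S1$. The hypotheses on $\varphi_0$ are only $C^1$ rather than $C^{1+\alpha}$, so Proposition \ref{setup} and Lemma \ref{key} do not apply literally, but their proofs adapt. I would introduce a nondecreasing modulus $\sigma$ with $\sigma(r) \to 0$ that bounds $|(\widetilde{\psi^\eta})'(x) - 1|$ for $|x| \leq r$, uniformly in $\eta$. Such a $\sigma$ exists because the chain rule gives
\[
(\widetilde{\psi^\eta})'(x) = (\widetilde{h_{s_\eta}^{-1}})'(\widetilde{\varphi_0^\eta}(x)) \cdot \widetilde{\varphi_0}'(x + y),
\]
which equals $1$ at $x = 0$; uniform continuity of $\widetilde{\varphi_0}'$ on $\R$ together with smooth dependence of $h_s^{-1}$ on $s$ over the compact range of $s_\eta$ yields a common modulus. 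Integrating this bound from $0$ to $x$ replaces the H\"older estimate of Proposition \ref{setup} by $|\psi^\eta(\zeta) - \zeta| \leq \sigma(|\zeta - 1|)\, |\zeta - 1|$.

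With the uniform modulus $\sigma$ in hand, the computation in the proof of Lemma \ref{key} goes through almost verbatim: writing $w := h_t(\zeta)$, one obtains $|\psi_t^\eta(\zeta) - \zeta| \leq 4\sigma(|w - 1|)$ when $|w - 1|$ is small, and $|\psi_t^\eta(\zeta) - \zeta| \leq 16(1-t)\sigma_*/\delta$ when $|w - 1| \geq \delta$, where $\sigma_*$ is any uniform upper bound for $\sigma$. Choosing $\delta$ with $4L\,\sigma(\delta) \leq \varepsilon_0$ and then $1-t$ small enough that $16L(1-t)\sigma_*/\delta \leq \varepsilon_0$ delivers the desired bound, and the resulting threshold $\delta_0$ depends only on $\varepsilon_0$ and $\varphi_0$. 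The main obstacle is the uniformity of $\sigma$ in $\eta$; once that is granted, the rest amounts to rerunning Lemma \ref{key}'s estimates and checking that the auxiliary inequalities $|1 - t\psi^\eta(w)| \geq 1 - t$, $|w - 1| \leq 2|tw - 1|$, and $|1 - \psi^\eta(w)| \geq |w - 1|/2$ remain valid in the present $C^1$ setting.
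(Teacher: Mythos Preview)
Your approach is correct and genuinely different from the paper's. The paper never factors $\varphi_t^\eta$ through $h_{s_\eta}$; instead it writes
\[
|\varphi_t^\eta(\zeta)-h_{s_\eta}(\zeta)|=\frac{(1-t^2)\,|\varphi_0^\eta(\omega)-h_{s_\eta}(\omega)|}{|1-t\varphi_0^\eta(\omega)|\cdot|1-th_{s_\eta}(\omega)|}
\]
and proves an intermediate claim that $|\varphi_0^\eta(\omega)-h_{s_\eta}(\omega)|=o(|h_{s_\eta}(\omega)-1|)$ uniformly in $\eta$, then splits according to whether $|h_{s_\eta}(\omega)-1|$ is small or not. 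Your observation that $\{h_s\}$ is a one-parameter group, so that $h_t^{-1}h_{s_\eta}=h_{s_\eta}h_t^{-1}$ and hence $\varphi_t^\eta=h_{s_\eta}\circ\psi_t^\eta$, is neat: it reduces Lemma~\ref{akari2} to a $C^1$ version of Lemma~\ref{key} applied to $\psi^\eta=h_{s_\eta}^{-1}\circ\varphi_0^\eta$, and the Lipschitz factor for $h_{s_\eta}$ is harmless because $s_\eta$ stays in a compact subinterval of $(-1,1)$. The paper's argument is more self-contained, while yours is more structural and recycles Lemma~\ref{key} almost verbatim.

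One small caution: the auxiliary inequality $|1-\psi^\eta(\omega)|\geq|\omega-1|/2$ in Lemma~\ref{key} came from the global bound $C\leq 1/4$; in your $C^1$ setting this would require $\sigma(r)\leq 1/2$ for all $r\in[0,2]$, which is not guaranteed by $\sigma(r)\to 0$ as $r\to 0$. The fix is immediate---since the family $\{\psi^\eta\}_{\eta\in\S1}$ is uniformly bi-Lipschitz (both $\varphi_0^\eta$ and $h_{s_\eta}^{-1}$ are), you get $|1-\psi^\eta(\omega)|\geq c\,|\omega-1|$ for some $c>0$ depending only on $\varphi_0$, and the rest of the estimate goes through with $c$ in place of $1/2$.
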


\begin{proof}
Set $\omega=h_t(\zeta)$. Then
\begin{align*}
|\varphi_t^\eta(\zeta)-h_{s_\eta}(\zeta)|&=|h_t^{-1}(\varphi_0^\eta(\omega))-h_t^{-1}(h_{s_\eta}(\omega))|
%&=\left|\frac{\varphi_0^\eta(\omega)-t}{1-t \varphi_0^\eta(\omega)}-\frac{h_{s_\eta}(\omega)-t}{1-t h_{s_\eta}(\omega)} \right|
= \frac{(1-t^2)\,|\varphi_0^\eta(\omega)-h_{s_\eta}(\omega)|}{|1-t\varphi_0^\eta(\omega)|\cdot |1-t h_{s_\eta}(\omega)|}.
\end{align*}
We will estimate the difference between $\varphi_0^\eta$ and $h_{s_\eta}$ near $1$. Note that $\varphi_0^\eta(1)=h_{s_\eta}(1)=1$
and $(\varphi_0^\eta)'_{\S1}(1)=(h_{s_\eta})'_{\S1}(1)=\ell_\eta$.

\begin{claim}
For any $\tilde \varepsilon>0$, there exists $\tilde \delta>0$ independent of $\eta$ such that
if $|h_{s_\eta}(\omega)-1| \leq \tilde \delta$ then
$$
|\varphi_0^\eta(\omega)-h_{s_\eta}(\omega)| \leq \tilde \varepsilon |h_{s_\eta}(\omega)-1|.
$$
\end{claim}

\begin{proof}
Take the lift $\widetilde \varphi_0^\eta$ of $\varphi_0^\eta$ with $\widetilde \varphi_0^\eta(0)=0$. Then
$$
\widetilde \varphi_0^\eta(x)=\ell_\eta \, x+\{(\widetilde \varphi_0^\eta)'(\xi)-(\widetilde \varphi_0^\eta)'(0)\}\,x
$$
for some $\xi \in \R$ between $0$ and $x$. Since $(\widetilde \varphi_0^\eta)'$ is uniformly eqi-continuous independent of $\eta$,
$|(\widetilde \varphi_0^\eta)'(\xi)-(\widetilde \varphi_0^\eta)'(0)|$ is bounded by some constant $c(x)>0$ with $c(x) \to 0$
($x \to 0$).
Hence
$$
|\widetilde \varphi_0^\eta(x)-\ell_\eta x| \leq c(x)|x| \qquad (\forall \eta \in \S1).
$$
We consider the same estimate for the lift $\widetilde h_{s_\eta}$ of $h_{s_\eta}$. Since $s_\eta$ is uniformly bounded
away from $-1$ and $1$ (as $\ell_\eta$ is uniformly bounded
away from $0$ and $\infty$)
independent of $\eta$, we also have some constant $c_*(x)>0$ with $c_*(x) \to 0$ ($x \to 0$)
such that
$$
|\widetilde h_{s_\eta}(x)-\ell_\eta x| \leq c_*(x)|x| \qquad (\forall \eta \in \S1).
$$

On the other hand, since $\widetilde h_{s_\eta}(x)=\widetilde h_{s_\eta}'(\xi_*)\,x$ for some $\xi_* \in \R$
and since $\widetilde h_{s_\eta}'(\xi_*) \geq \min\{\ell_\eta,\ell_\eta^{-1}\}$, we have
$$
|x| \leq \frac{1}{\min_{\eta \in \S1} \ell_\eta^{\pm 1}}\,|\widetilde h_{s_\eta}(x)|.
$$
Therefore we obtain that
$$
|\widetilde \varphi_0^\eta(x)-\widetilde h_{s_\eta}(x)| \leq (c(x)+c_*(x))|x| 
\leq \frac{c(x)+c_*(x)}{\min_{\eta \in \S1} \ell_\eta^{\pm 1}}\,|\widetilde h_{s_\eta}(x)|.
$$
Here, $\widetilde h_{s_\eta}(x) \to 0$ implies $x \to 0$ and then 
the coefficient of $|\widetilde h_{s_\eta}(x)|$ in the last term
tends to $0$. Transforming this inequality for $\varphi_0^\eta(\omega)$ and $h_{s_\eta}(\omega)$, we can verify
the required claim.
\end{proof}

\noindent
{\it Proof of Lemma \ref{akari2} continued}.
For a given $\varepsilon_0 \in (0,2]$, set $\tilde \varepsilon=\varepsilon_0/4$ and
choose $\tilde \delta$ as in the claim. First, we consider the case where $|h_{s_\eta}(\omega)-1| \leq \tilde \delta$.
Then, by $|1-t\varphi_0^\eta(\omega)| \geq 1-t$ and $2|1-t h_{s_\eta}(\omega)| \geq |1-h_{s_\eta}(\omega)|$, the claim shows that
$$
\frac{(1-t^2)\,|\varphi_0^\eta(\omega)-h_{s_\eta}(\omega)|}{|1-t\varphi_0^\eta(\omega)|\cdot |1-t h_{s_\eta}(\omega)|}
\leq \frac{2(1-t^2)\,|\varphi_0^\eta(\omega)-h_{s_\eta}(\omega)|}{(1-t)\cdot |h_{s_\eta}(\omega)-1|} \leq 4\tilde \varepsilon=\varepsilon_0.
$$
Thus we obtain $|\varphi_t^\eta(\zeta)-h_{s_\eta}(\zeta)| \leq \varepsilon_0$ without taking care of $t$ in this case.

Next, we consider the case where $|h_{s_\eta}(\omega)-1| \geq \tilde \delta$. Then, using 
$2|1-t\varphi_0^\eta(\omega)| \geq |1-\varphi_0^\eta(\omega)|$ in addition, we have
$$
\frac{(1-t^2)\,|\varphi_0^\eta(\omega)-h_{s_\eta}(\omega)|}{|1-t\varphi_0^\eta(\omega)|\cdot |1-t h_{s_\eta}(\omega)|}
\leq
\frac{4(1-t^2)\,|\varphi_0^\eta(\omega)-h_{s_\eta}(\omega)|}{|1-\varphi_0^\eta(\omega)|\cdot |1-h_{s_\eta}(\omega)|}
\leq
\frac{16(1-t)}{\tilde \delta |1-\varphi_0^\eta(\omega)|}.
$$
Here, if $|h_{s_\eta}(\omega)-1| = \tilde \delta$ then 
\begin{align*}
|1-\varphi_0^\eta(\omega)| &\geq |h_{s_\eta}(\omega)-1|-|h_{s_\eta}(\omega)-\varphi_0^\eta(\omega)|\\
&\geq (1-\tilde \varepsilon)|h_{s_\eta}(\omega)-1| \geq \tilde \delta /2
\end{align*}
by the above claim and $\tilde \varepsilon \leq 1/2$. However, since $\varphi_0^\eta$ is a self-homeomorphism of $\S1$,
this is also true even for $|h_{s_\eta}(\omega)-1| > \tilde \delta$. Hence 
$$
|\varphi_t^\eta(\zeta)-h_{s_\eta}(\zeta)| \leq \frac{16(1-t)}{\tilde \delta|1-\varphi_0^\eta(\omega)|} \leq \frac{32(1-t)}{\tilde \delta^2}.
$$
By choosing $\delta_0=\varepsilon_0 \tilde \delta^2/32$, we obtain the assertion.
\end{proof}

\medskip
\section{Average of circle homeomorphisms}\label{5}

The barycentric extension is defined by considering the average of a circle homeomorphism.
In this section, we will show necessary properties of the average and the vector field given by the average function.

Recall that the M\"obius transformation $\gamma_w \in \Mob(\D)$ is defined by
$$
\gamma_w(z)=\frac{z-w}{1-\bar w z}
$$
for each $w \in \D$. 
First, we list up properties of $\gamma_w$ which will be used later. They are verified easily.

\begin{proposition}\label{claims}
The M\"obius transformation $\gamma_w \in \Mob(\D)$ for each $w \in \D$ satisfies the following:
\begin{enumerate}
\item
$\displaystyle{|\gamma_w(z)-z| \leq \frac{2|w|}{1-|w|}}$ for every $z \in \D$;
%\item
%$\displaystyle{\frac{1-|w|}{1+|w|} \leq |\gamma'_w(z)| \leq \frac{1+|w|}{1-|w|}}$ for every $z \in \D$;
\item
$\displaystyle{|\gamma'_w(\zeta)|=\frac{1-|w|^2}{|\zeta-w|^2}}$
is the Poisson kernel, which satisfies
$\displaystyle{\frac{1-|w|}{1+|w|} \leq |\gamma'_w(\zeta)| \leq \frac{1+|w|}{1-|w|}}$ for every $\zeta \in \S1$;
\item
$\displaystyle{\frac{1}{2\pi}\int_{\S1} \gamma_w(\zeta)\,|d\zeta|=-w}$.
\end{enumerate}
\end{proposition}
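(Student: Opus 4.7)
The plan is to verify each of the three assertions by a direct computation using only the defining formula of $\gamma_w$ and elementary properties of holomorphic functions on $\D$. For part~(1), I would first combine $\gamma_w(z)-z$ over a common denominator to obtain
$$
\gamma_w(z)-z = \frac{\bar w z^2 - w}{1-\bar w z},
$$
then bound the numerator above by $|w|(|z|^2+1)\le 2|w|$ via the triangle inequality (using $|z|\le 1$) and the denominator below by $|1-\bar w z|\ge 1-|w|$. For part~(2), the quotient rule gives $\gamma_w'(z) = (1-|w|^2)/(1-\bar w z)^2$; on $\zeta\in\S1$ the identity $|1-\bar w \zeta|=|\zeta-w|$ (obtained by multiplying by $|\zeta|=1$) identifies $|\gamma_w'(\zeta)|$ with the Poisson kernel, and the two-sided bound then follows from $1-|w|\le |\zeta-w|\le 1+|w|$.

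For part~(3), since $\gamma_w$ is holomorphic on $\D$ and continuous on $\overline{\D}$ (as $|w|<1$), the mean value property evaluated at $z=0$ yields immediately
$$
\frac{1}{2\pi}\int_{\S1}\gamma_w(\zeta)\,|d\zeta| = \gamma_w(0) = -w.
$$
No genuine obstacle arises in any of the three parts, as the author himself signals by writing ``verified easily''. The only points requiring mild care are the identity $|1-\bar w \zeta|=|\zeta-w|$ on $\S1$ used in~(2) and the recognition that part~(3) is simply the mean value property for the bounded holomorphic function $\gamma_w$, rather than something requiring a power-series expansion.
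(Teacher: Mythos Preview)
Your proposal is correct and proceeds exactly as the paper intends: the author offers no argument beyond ``They are verified easily,'' and your direct computations---combining over a common denominator for~(1), the quotient rule plus $|1-\bar w\zeta|=|\zeta-w|$ on $\S1$ for~(2), and the mean value property for~(3)---are precisely the routine verifications being alluded to.
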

\smallskip

For $\varphi \in \Homeo(\S1)$, we define its average taken at $w \in \D$ as
$$
\xi_\varphi (w)=\frac{1}{2\pi} \int_{\S1} \frac{\varphi(\zeta)-w}{1-\bar w \varphi(\zeta)}\,|d\zeta|.
$$
Then $\xi_\varphi$ 
is a complex-valued differentiable function on $\D$, 
which can be regarded as 
a vector field on $\D$. If $\varphi \in \Homeo(\S1)$ is close to $\rm id$, then
the vector field $\xi_\varphi$ is close to $\xi_{\rm id}$ as the following claim shows.

\begin{proposition}\label{av-id}
If $\varphi \in \Homeo(\S1)$ satisfies $\Vert \varphi-\id \Vert_{\S1} < \varepsilon$,
%$d_{\S1}(\varphi(\zeta),\zeta) \leq \varepsilon$ for every $\zeta \in \S1$,
then
$|\xi_\varphi(w)-\xi_{\rm id}(w)| < 2\varepsilon$ for every $w \in \D$.
\end{proposition}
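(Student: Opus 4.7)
The plan is to express the difference as a single integral over $\S1$, exploit the holomorphy of $\gamma_w$ to rewrite the integrand as a path integral of $\gamma_w'$ along the short arc between $\zeta$ and $\varphi(\zeta)$, and then interchange the order of integration so that the Poisson kernel $|\gamma_w'|$ is integrated against arc length on $\S1$, where its total mass is explicit.

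First I would observe that
$$
\xi_\varphi(w) - \xi_{\id}(w) = \frac{1}{2\pi}\int_{\S1}\bigl[\gamma_w(\varphi(\zeta)) - \gamma_w(\zeta)\bigr]\,|d\zeta|.
$$
Since $\gamma_w$ is holomorphic across $\S1$ (its only pole $1/\bar w$ lies outside $\overline{\D}$), for each $\zeta$ I can represent
$$
\gamma_w(\varphi(\zeta)) - \gamma_w(\zeta) = \int_{I(\zeta)} \gamma_w'(\eta)\,d\eta,
$$
where $I(\zeta) \subset \S1$ is the shorter subarc from $\zeta$ to $\varphi(\zeta)$, whose arc length is $d_{\S1}(\zeta,\varphi(\zeta)) < \varepsilon$. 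Taking absolute values inside and applying Tonelli's theorem,
$$
|\xi_\varphi(w) - \xi_{\id}(w)| \leq \frac{1}{2\pi}\int_{\S1}|\gamma_w'(\eta)|\,m\bigl(\{\zeta \in \S1 : \eta \in I(\zeta)\}\bigr)\,|d\eta|,
$$
with $m$ denoting arc length measure on $\S1$.

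The decisive step is the bound $m(\{\zeta : \eta \in I(\zeta)\}) \leq 2\varepsilon$. Indeed, if $\eta \in I(\zeta)$ then $d_{\S1}(\zeta,\eta) \leq d_{\S1}(\zeta,\varphi(\zeta)) < \varepsilon$, so the set is contained in the $\varepsilon$-neighborhood of $\eta$ on $\S1$. Combined with Proposition \ref{claims}(2), which identifies $|\gamma_w'(\eta)|$ with the Poisson kernel and hence gives $\int_{\S1}|\gamma_w'(\eta)|\,|d\eta|=2\pi$, this yields
$$
|\xi_\varphi(w) - \xi_{\id}(w)| \leq \frac{1}{2\pi}\cdot 2\varepsilon \cdot 2\pi = 2\varepsilon,
$$
with strictness inherited from the strict hypothesis $\Vert \varphi - \id\Vert_{\S1} < \varepsilon$.

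The main obstacle is finding this packaging rather than executing the estimate. The naive factorization $\gamma_w(a)-\gamma_w(b)=(1-|w|^2)(a-b)/[(1-\bar w a)(1-\bar w b)]$ leads, after integrating, to expressions involving the Poisson kernel evaluated at $\varphi(\zeta)$; these cannot be controlled uniformly in $w$ without differentiability of $\varphi$, which we do not have. The path-integral-plus-Fubini detour avoids this by keeping the Poisson kernel integrated against arc length on $\S1$ in its natural variable, where its total mass is $2\pi$; the weight $1-|w|^2$ in the kernel is precisely what balances the factor $1/|1-\bar w\zeta|^2$ that was threatening to blow up near the boundary.
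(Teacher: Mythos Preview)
Your proof is correct and follows essentially the same approach as the paper: both express the difference as a path integral of $|\gamma_w'|$ along the short arc between $\zeta$ and $\varphi(\zeta)$, then swap the order of integration so that the Poisson kernel $|\gamma_w'|$ is integrated over $\S1$ with total mass $2\pi$. The only cosmetic difference is in the Fubini step: the paper first enlarges the inner arc to the symmetric interval $[\zeta-\varepsilon,\zeta+\varepsilon]$ and then swaps, whereas you swap first and bound the slice measure $m(\{\zeta:\eta\in I(\zeta)\})\leq 2\varepsilon$ directly; these are equivalent packagings of the same estimate.
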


\begin{proof}
The definition of $\xi$ implies that
$$
|\xi_\varphi(w)-\xi_{\rm id}(w)|
=\left|\frac{1}{2\pi} \int_{\S1} \gamma_w(\varphi(\zeta))\,|d\zeta|-\frac{1}{2\pi} \int_{\S1} \gamma_w(\zeta)\,|d\zeta| \right|.
$$
Then this is estimated from above by
$$
\frac{1}{2\pi} \int_{\S1} |\gamma_w(\varphi(\zeta))-\gamma_w(\zeta)|\,|d\zeta|
\leq \frac{1}{2\pi} \int_{\S1}\left(\int_\zeta ^{\varphi(\zeta)}|\gamma'_w(\eta)|\,|d\eta| \right)\,|d \zeta|,
$$
where the inner path integral is along the circular arc from $\zeta$ to $\varphi(\zeta)$.
Since $d_{\S1}(\varphi(\zeta),\zeta) < \varepsilon$, this integral is strictly bounded by
$\int^{\zeta+\varepsilon}_{\zeta-\varepsilon} |\gamma'_w(\eta)|\,|d\eta|$. Hence we have
$$
|\xi_\varphi(w)-\xi_{\rm id}(w)|
< \frac{1}{2\pi} \int_{\S1}\left(\int^{\zeta+\varepsilon}_{\zeta-\varepsilon} |\gamma'_w(\eta)|\,|d\eta| \right)\,|d \zeta|
\leq \frac{2\varepsilon}{2\pi} \int_{\S1} |\gamma'_w(\eta)|\,|d\eta|=2\varepsilon.
$$
Here, the last equality is due to the fact that $|\gamma'_w(\eta)|$ is the Poisson kernel by Proposition \ref{claims} (2).
\end{proof}
\medskip

\begin{remark}
Since $\xi_{\rm id}(w)=-w$ by Proposition \ref{claims} (3), we have
$|\xi_\varphi(w)+w|< 2\varepsilon$ in Proposition \ref{av-id}.
\end{remark}

The barycenter of $\varphi \in \Homeo(\S1)$ is defined to be a point $w \in \D$ such that $\xi_\varphi(w)=0$.
It can be shown that it exists uniquely for every $\varphi \in \Homeo(\S1)$ (see \cite[Proposition 1]{DE}, \cite[Lemma 5.20]{P}).

\begin{corollary}\label{barycenter}
If $\varphi \in \Homeo(\S1)$ satisfies $\Vert \varphi-{\rm id}\Vert_{\S1} < \varepsilon$,
then the barycenter $w \in \D$ of $\varphi$ satisfies $|w| < 2\varepsilon$.
\end{corollary}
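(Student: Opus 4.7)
My plan is to derive this directly from the preceding Proposition \ref{av-id} and the explicit computation $\xi_{\id}(w) = -w$. First I would invoke the definition of the barycenter: a point $w \in \D$ at which $\xi_\varphi(w) = 0$. Then, by Proposition \ref{av-id}, the hypothesis $\Vert \varphi - \id \Vert_{\S1} < \varepsilon$ yields the pointwise bound $|\xi_\varphi(w) - \xi_{\id}(w)| < 2\varepsilon$ at this particular point $w$.

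Next I would use Proposition \ref{claims}(3), which gives $\xi_{\id}(w) = \frac{1}{2\pi}\int_{\S1} \gamma_w(\zeta)\,|d\zeta| = -w$ (this is exactly the observation recorded in the remark just above the corollary). Substituting this in and using that $\xi_\varphi(w) = 0$, one obtains
\[
|w| = |-w| = |\xi_\varphi(w) - \xi_{\id}(w)| < 2\varepsilon,
\]
which is the desired conclusion.

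There is really no serious obstacle here: the corollary is a clean one-line consequence of the preceding proposition once the identity $\xi_{\id}(w) = -w$ is plugged in. The only point worth noting is that the argument uses Proposition \ref{av-id} at the single point $w$ realizing the barycenter (whose existence and uniqueness for every $\varphi \in \Homeo(\S1)$ is quoted just before the corollary), rather than as a uniform statement on $\D$.
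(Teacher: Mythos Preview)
Your proof is correct and essentially identical to the paper's: both invoke the definition $\xi_\varphi(w)=0$, apply Proposition~\ref{av-id} at that point, and substitute $\xi_{\id}(w)=-w$ from Proposition~\ref{claims}(3) (the remark) to conclude $|w|<2\varepsilon$.
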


\begin{proof}
The barycenter $w$ of $\varphi$ satisfies $\xi_\varphi(w)=0$ by definition. Then the result follows from Proposition \ref{av-id}
and the remark after that.
\end{proof}

We generalize the above proposition to an assertion on the difference between any two average functions and 
moreover on the difference between their derivatives.

\begin{proposition}\label{av-any}
For any $\varphi, \psi \in \Homeo(\S1)$, the following inequalities are satisfied for every $w \in \D$:
\begin{enumerate}
\item
$\displaystyle{|\xi_\varphi(w)-\xi_\psi(w)| \leq \frac{1+|w|}{1-|w|} \Vert \varphi-\psi \Vert_{\S1}}$;
\item
$\displaystyle{|\partial \xi_\varphi(w)-\partial \xi_\psi(w)| \leq \frac{|w|}{(1-|w|)^2} \Vert \varphi-\psi \Vert_{\S1}}$;
\item
$\displaystyle{|\bar \partial \xi_\varphi(w)-\bar \partial \xi_\psi(w)| 
\leq \frac{(2-|w|)(1+|w|)^2}{(1-|w|)^4} \Vert \varphi-\psi \Vert_{\S1}}$.
\end{enumerate}
\end{proposition}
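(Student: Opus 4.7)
The plan is to start from the elementary identity
\begin{equation*}
\gamma_w(\alpha)-\gamma_w(\beta)=\frac{(1-|w|^2)(\alpha-\beta)}{(1-\bar w\alpha)(1-\bar w\beta)},
\end{equation*}
which is obtained by writing the left-hand side over a common denominator and observing that the cross terms $\bar w\alpha\beta$ and $w$ cancel, so that only $(\alpha-\beta)(1-\bar w w)$ survives in the numerator. Substituting $\alpha=\varphi(\zeta)$ and $\beta=\psi(\zeta)$ and integrating yields
\begin{equation*}
\xi_\varphi(w)-\xi_\psi(w)=\frac{1}{2\pi}\int_{\S1}\frac{(1-|w|^2)(\varphi(\zeta)-\psi(\zeta))}{(1-\bar w\varphi(\zeta))(1-\bar w\psi(\zeta))}\,|d\zeta|.
\end{equation*}
For any $\eta\in\S1$ one has $|1-\bar w\eta|\geq 1-|w|$, and $|\varphi(\zeta)-\psi(\zeta)|\leq d_{\S1}(\varphi(\zeta),\psi(\zeta))\leq \Vert\varphi-\psi\Vert_{\S1}$, so the triangle inequality under the integral sign immediately yields (1).

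For inequalities (2) and (3), the idea is to differentiate the representation above under the integral sign. Since the denominator $(1-\bar w\varphi(\zeta))(1-\bar w\psi(\zeta))$ depends only on $\bar w$, the derivative $\partial$ acts only on the factor $(1-|w|^2)=(1-\bar w w)$ in the numerator and produces $-\bar w$. The same pointwise estimate as for (1), now with the extra factor $|w|$, then yields (2).

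For (3) the derivative $\bar\partial$ acts on both the numerator and the denominator. Applying the quotient rule gives
\begin{equation*}
\bar\partial\!\left[\frac{1-|w|^2}{(1-\bar w\varphi)(1-\bar w\psi)}\right]=\frac{-w(1-\bar w\varphi)(1-\bar w\psi)+(1-|w|^2)\bigl[\varphi(1-\bar w\psi)+\psi(1-\bar w\varphi)\bigr]}{[(1-\bar w\varphi)(1-\bar w\psi)]^2}.
\end{equation*}
The triangle inequality together with $|\varphi(\zeta)|=|\psi(\zeta)|=1$ bounds the numerator in absolute value by $|w|(1+|w|)^2+2(1-|w|^2)(1+|w|)$; the identity $(1-|w|^2)(1+|w|)=(1-|w|)(1+|w|)^2$ then collapses this to $(2-|w|)(1+|w|)^2$. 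The modulus of the denominator is at least $(1-|w|)^4$, so multiplying the resulting bound by $|\varphi(\zeta)-\psi(\zeta)|\leq\Vert\varphi-\psi\Vert_{\S1}$ and integrating over $\S1$ delivers (3).

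The main obstacle is purely computational and lies in (3): one must apply the quotient rule carefully to assemble the numerator, then recognize the compact form $(2-|w|)(1+|w|)^2$ for the bound. Everything else reduces to the two standard estimates $|1-\bar w\eta|\geq 1-|w|$ and $|1-\bar w\eta|\leq 1+|w|$ for $\eta\in\S1$ already used in Proposition~\ref{claims}~(2).
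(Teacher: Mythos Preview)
Your proof is correct and follows essentially the same approach as the paper. The identity for (1) and the resulting estimate are identical; for (2) and (3) you differentiate the integral representation of the \emph{difference} $\xi_\varphi-\xi_\psi$, whereas the paper first computes $\partial\xi_\varphi$, $\bar\partial\xi_\varphi$ explicitly and then subtracts---but since differentiation commutes with subtraction, the two routes produce the same integrands (your numerator in (3) expands to the paper's factor $\varphi+\psi+\bar w(|w|^2-2)\varphi\psi-w$) and the same bounds.
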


\begin{proof}
(1) Simple computation yields
$$
\frac{\varphi(\zeta)-w}{1-\bar w \varphi(w)}-\frac{\psi(\zeta)-w}{1-\bar w \psi(w)}=
\frac{(1-|w|^2)(\varphi(\zeta)-\psi(\zeta))}{(1-\bar w \varphi(\zeta))(1-\bar w \psi(\zeta))}.
$$
Estimating the absolute value of the denominator from below by $(1-|w|)^2$, we have the assertion.

(2) The $\partial$-derivative of $\xi_\varphi$ is 
$$
\partial \xi_\varphi(w)=\frac{1}{2\pi} \int_{\S1} \frac{-1}{1-\bar w \varphi(\zeta)}\,|d\zeta|
$$
and the same is true for $\xi_\psi$. Then
$$
\frac{-1}{1-\bar w \varphi(\zeta)}-\frac{-1}{1-\bar w \psi(\zeta)}
=\frac{-\bar w(\varphi(\zeta)-\psi(\zeta))}{(1-\bar w \varphi(\zeta))(1-\bar w \psi(\zeta))}.
$$
By the same estimate for the denominator as before, we have the assertion.

(3) The $\bar \partial$-derivative of $\xi_\varphi$ is 
$$
\bar \partial \xi_\varphi(w)=\frac{1}{2\pi} \int_{\S1} \frac{(\varphi(\zeta)-w)\varphi(\zeta)}{(1-\bar w \varphi(\zeta))^2}\,|d\zeta|
$$
and the same is true for $\xi_\psi$. Then
\begin{align*}
&\quad \frac{(\varphi(\zeta)-w)\varphi(\zeta)}{(1-\bar w \varphi(\zeta))^2}-\frac{(\psi(\zeta)-w)\psi(\zeta)}{(1-\bar w \psi(\zeta))^2}\\
&=\frac{(\varphi(\zeta)-\psi(\zeta))\{\varphi(\zeta)+\psi(\zeta)+\bar w(|w|^2-2)\varphi(\zeta)\psi(\zeta)-w\}}
{(1-\bar w \varphi(\zeta))^2 (1-\bar w \psi(\zeta))^2}
\end{align*}
Here we estimate the absolute value of a factor of the numerator as
\begin{align*}
&\quad |\varphi(\zeta)+\psi(\zeta)+\bar w(|w|^2-2)\varphi(\zeta)\psi(\zeta)-w|\\
&\leq 2+|w|(2-|w|^2)+|w|
=(2-|w|)(1+|w|)^2.
\end{align*}
By the same estimate for the denominator as before, we have the assertion.
\end{proof}
\medskip

Next, we will see that if $\varphi \in \Homeo(\S1)$ is close to $\rm id$ and normalized so that 
its barycenter is at the origin $0 \in \D$, then $|\xi_\varphi(w)|$ can be estimated from below by
$|\xi_{\rm id}(w)|=|w|$ near the origin.

\begin{lemma}\label{lower}
Suppose that $\varphi \in \Homeo(\S1)$ satisfies $\Vert \varphi-{\rm id} \Vert_{\S1} \leq \varepsilon$
and $\xi_\varphi(0)=0$. Then
$$
(1-56 \varepsilon)|w| \leq |\xi_\varphi(w)|
$$
for every $w \in \D$ with $|w| \leq 1/2$. 
%where $a>0$ is some absolute constant.
\end{lemma}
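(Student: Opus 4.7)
The plan is to control the deviation $F(w) := \xi_\varphi(w) - \xi_{\id}(w) = \xi_\varphi(w) + w$ (using $\xi_{\id}(w) = -w$ from Proposition \ref{claims} (3)) and then to deduce the lower bound on $|\xi_\varphi(w)|$ from the triangle inequality $|\xi_\varphi(w)| \geq |w| - |F(w)|$. Concretely, it suffices to show $|F(w)| \leq 56\varepsilon\,|w|$ for $|w| \leq 1/2$; the conclusion then reads $|\xi_\varphi(w)| \geq (1 - 56\varepsilon)\,|w|$.

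The key observation is that the hypothesis $\xi_\varphi(0) = 0$, combined with $\xi_{\id}(0) = 0$, forces $F(0) = 0$. Writing $F$ along the segment from $0$ to $w$ and applying the chain rule in the Wirtinger variables yields
$$F(w) = \int_0^1 \frac{d}{dt} F(tw)\,dt = w \int_0^1 \partial F(tw)\,dt + \bar w \int_0^1 \bar\partial F(tw)\,dt,$$
so that
$$|F(w)| \leq |w| \int_0^1 \bigl(|\partial F(tw)| + |\bar\partial F(tw)|\bigr)\,dt.$$
This is exactly the step that converts the absolute deviation bound one would get from Proposition \ref{av-any} (1) alone into a bound proportional to $|w|$, which is what the lemma demands.

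To estimate the integrand I would invoke parts (2) and (3) of Proposition \ref{av-any} with $\psi = \id$ and $\Vert \varphi - \id\Vert_{\S1} \leq \varepsilon$, obtaining
$$|\partial F(z)| \leq \frac{|z|}{(1-|z|)^2}\,\varepsilon, \qquad |\bar\partial F(z)| \leq \frac{(2-|z|)(1+|z|)^2}{(1-|z|)^4}\,\varepsilon.$$
Both right-hand sides are monotone increasing in $|z|$ on $[0,1)$, so on the ray of integration $|tw| \leq |w| \leq 1/2$ each is dominated by its value at $|z| = 1/2$. Evaluating, the two bounds become $2\varepsilon$ and $54\varepsilon$ respectively, and their sum is precisely $56\varepsilon$.

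The only delicate point is the arithmetic behind the constant: one must verify the monotonicity of the two derivative bounds on $[0,1/2]$ (so that the supremum along the ray is attained at $|w|=1/2$) and then check that $\tfrac{1/2}{(1-1/2)^2} + \tfrac{(3/2)(3/2)^2}{(1-1/2)^4} = 2 + 54 = 56$. Beyond this bookkeeping no further estimate is required; the entire proof is the line-integral reduction above combined with the Wirtinger derivative estimates already packaged in Proposition \ref{av-any}.
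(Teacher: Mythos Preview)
Your proposal is correct and follows essentially the same route as the paper's proof: both integrate $\xi_\varphi(w)+w$ along the radial segment from $0$ to $w$, then bound $|\partial\xi_\varphi+1|$ and $|\bar\partial\xi_\varphi|$ via Proposition~\ref{av-any} (2) and (3) with $\psi=\id$, evaluating the resulting rational bounds at $|z|=1/2$ to obtain $2\varepsilon+54\varepsilon=56\varepsilon$. The only cosmetic differences are your explicit introduction of $F=\xi_\varphi-\xi_{\id}$ and the parametrization by $t\in[0,1]$ instead of arc length.
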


\begin{proof}
For any such $w \in \D$, take the segment connecting to $0 \in \D$. 
Represent this segment by $\gamma(s)$ for the arc length parameter $s \in [0,|w|]$
with $\gamma(0)=0$ and $\gamma(|w|)=w$.
Then 
$$
\xi_\varphi(w)=\int_0^{|w|} \frac{d \xi_\varphi(\gamma(s))}{ds}\,ds
=\int_0^{|w|} (\partial \xi_\varphi(\gamma(s))e^{i\theta}+\bar \partial \xi_\varphi(\gamma(s))e^{-i\theta}) ds,
$$
where $\theta=\arg w$. From this, we have
$$
\xi_\varphi(w)+w=e^{i\theta} \int_0^{|w|}(\partial \xi_\varphi(\gamma(s))+1+\bar \partial \xi_\varphi(\gamma(s)) e^{-2i\theta})ds.
$$
For $|w| \leq 1/2$, we apply Proposition \ref{av-any} (2) and (3) to obtain
\begin{align*}
|\xi_\varphi(w)+w| &\leq \int_0^{|w|} |\partial \xi_\varphi(\gamma(s))+1|\,ds
+\int_0^{|w|}|\bar \partial \xi_\varphi(\gamma(s))|\,ds\\
&\leq 2 \varepsilon |w|+54 \varepsilon |w|=56 \varepsilon |w|.
\end{align*}
It follows that $(1-56 \varepsilon)|w| \leq |\xi_\varphi(w)|$, which is the required inequality.
\end{proof}
\medskip

We choose $\varepsilon_0>0$ so that $\varepsilon_0 \leq 1/112$. 
Under this condition, if $\Vert \varphi-{\rm id} \Vert_{\S1} \leq \varepsilon_0$ and $\xi_\varphi(0)=0$, then
$|\xi_\varphi(w)| \geq |w|/2$ for $|w| \leq 1/2$ by Lemma \ref{lower}.

\begin{lemma}\label{rouche}
Assume that $\varphi_0 \in \Homeo(\S1)$ satisfies $\xi_{\varphi_0}(0)=0$ and
$|\xi_{\varphi_0}(w)| \geq |w|/2$ for $|w| \leq 1/2$.
If $\varphi_1 \in \Homeo(\S1)$ satisfies $\Vert \varphi_1-\varphi_0\Vert_{\S1} <\varepsilon$ with $0<\varepsilon \leq 1/12$, 
then $\xi_{\varphi_1}(w)$ has a zero, which is the barycenter of $\varphi_1$, in $|w|<6 \varepsilon$.
\end{lemma}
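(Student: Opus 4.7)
The label \emph{rouche} signposts the intended strategy: a topological Rouch\'e-type argument carried out with Brouwer degree, since $\xi_{\varphi_0}$ and $\xi_{\varphi_1}$ are only real-smooth (not holomorphic) maps $\D\to\C$. The plan is to produce, on the circle $|w|=6\varepsilon$, the strict domination $|\xi_{\varphi_1}(w)-\xi_{\varphi_0}(w)|<|\xi_{\varphi_0}(w)|$, and then read off the existence of a zero of $\xi_{\varphi_1}$ inside from the degree of $\xi_{\varphi_0}$ at $0$.

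The first step is the Rouch\'e inequality. Since $\varepsilon\leq 1/12$, one has $6\varepsilon\leq 1/2$, so on $|w|=6\varepsilon$ the factor $(1+|w|)/(1-|w|)$ appearing in Proposition \ref{av-any}(1) is at most $3$, giving $|\xi_{\varphi_1}(w)-\xi_{\varphi_0}(w)|<3\varepsilon$. The hypothesis provides $|\xi_{\varphi_0}(w)|\geq|w|/2=3\varepsilon$ on the same circle, hence the strict inequality $|\xi_{\varphi_1}(w)-\xi_{\varphi_0}(w)|<|\xi_{\varphi_0}(w)|$. In particular the straight-line homotopy $H_t=(1-t)\xi_{\varphi_0}+t\xi_{\varphi_1}$ is non-vanishing on the boundary circle for every $t\in[0,1]$, so by homotopy invariance of Brouwer degree,
$$
\deg(\xi_{\varphi_1},B(0,6\varepsilon),0)=\deg(\xi_{\varphi_0},B(0,6\varepsilon),0).
$$

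The crux is to show this common degree is non-zero, so that a zero of $\xi_{\varphi_1}$ must exist in $B(0,6\varepsilon)$, which lies in the open disk $|w|<6\varepsilon$ since $\xi_{\varphi_1}$ does not vanish on the boundary. By the lower-bound hypothesis, $w=0$ is the unique zero of $\xi_{\varphi_0}$ on $\overline{B(0,6\varepsilon)}$, so the degree equals the local index at $0$. A direct calculation from the formula for $\partial\xi_{\varphi_0}$ recorded in the proof of Proposition \ref{av-any}(2) yields $\partial\xi_{\varphi_0}(0)=-1$. Writing $\lambda=\bar\partial\xi_{\varphi_0}(0)$, the Taylor expansion $\xi_{\varphi_0}(w)=-w+\lambda\bar w+o(|w|)$ near $0$, combined with the quantitative lower bound $|\xi_{\varphi_0}(w)|\geq|w|/2$, forces $\inf_{\theta}|-1+\lambda e^{-2i\theta}|=1-|\lambda|\geq 1/2$, i.e.\ $|\lambda|\leq 1/2$. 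Hence the real Jacobian $|\partial\xi_{\varphi_0}(0)|^2-|\bar\partial\xi_{\varphi_0}(0)|^2\geq 3/4>0$, the linearization is an orientation-preserving $\R$-linear isomorphism, and its local index at $0$ equals $+1$.

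The main delicacy is precisely this last step. A bare non-vanishing hypothesis on $\xi_{\varphi_0}$ would not suffice, because the linearization could be degenerate at the origin; it is exactly the quantitative margin $|w|/2$ that pins $|\bar\partial\xi_{\varphi_0}(0)|$ strictly below $|\partial\xi_{\varphi_0}(0)|=1$ and makes the local index unambiguously $+1$. That the zero so obtained is \emph{the} barycenter of $\varphi_1$ is then the standard uniqueness of Douady--Earle barycenters already cited before Corollary \ref{barycenter}.
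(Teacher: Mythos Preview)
Your approach is essentially the paper's---a Rouch\'e-type comparison on the circle $|w|=6\varepsilon$---only you phrase it via Brouwer degree and homotopy invariance while the paper speaks of rotation numbers and the Poincar\'e--Hopf theorem. You are in fact more explicit than the paper on one point: you verify that the local index of $\xi_{\varphi_0}$ at $0$ is nonzero, whereas the paper simply invokes the uniqueness of the zero together with Poincar\'e--Hopf.

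There is one small slip in your index computation. The identity $\inf_\theta|-1+\lambda e^{-2i\theta}|=1-|\lambda|$ holds only when $|\lambda|\leq 1$; in general the infimum is $\bigl|1-|\lambda|\bigr|$, so the quantitative lower bound by itself would also be consistent with $|\lambda|\geq 3/2$. This is trivially repaired: the formula in the proof of Proposition~\ref{av-any}(3) gives $\bar\partial\xi_{\varphi_0}(0)=\frac{1}{2\pi}\int_{\S1}\varphi_0(\zeta)^2\,|d\zeta|$, whose modulus is at most $1$, so $|\lambda|\leq 1$ a priori and your conclusion $|\lambda|\leq 1/2$ then follows as written.
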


\begin{proof}
By $\Vert \varphi_1-\varphi_0\Vert_{\S1} <\varepsilon$, Proposition \ref{av-any} (1) gives
$|\xi_{\varphi_1}(w)-\xi_{\varphi_0}(w)|<3\varepsilon$ for $|w| \leq 1/2$.
On the other hand, on the circle $|w|=6\varepsilon \leq 1/2$, we have
$|\xi_{\varphi_0}(w)| \geq |w|/2=3\varepsilon$.
Then the argument principle 
%(or the Rouch\'e-like theorem) 
yields that 
the rotation numbers for $\xi_{\varphi_0}$ and $\xi_{\varphi_1}$ 
regarded as vector fields are the same along the circle $|w|=6\varepsilon$.
Since $\xi_{\varphi_0}(w)$ has the unique zero in $|w|<6\varepsilon$, the Poincar\'e-Hopf theorem implies that
$\xi_{\varphi_1}(w)$ also has a zero in $|w|<6\varepsilon$.
\end{proof}
\medskip

\section{The proof of the main theorem}\label{6}
\newtheorem{Claim}{Claim}
\renewcommand{\theClaim}{\arabic{Claim}}

%For $\varphi \in \Homeo(\S1)$, its conformally natural extension is denoted by
%$e(\varphi) \in \Homeo(\D)$.
%
%\begin{theorem}\label{main}
%Suppose that $\psi$ converge to $\id$ in $\Diff^{1+\alpha}(\S1)$.
%Then, for every $\varphi_0 \in \Diff^{1+\alpha}(\S1)$,
%the complex dilatations $\mu_{e(\psi \circ \varphi_0)}$ of their conformally natural extensions
%converge to $\mu_{e(\varphi_0)}$ in $\Bel^{\alpha}_0(\D)$.
%\end{theorem}

This section is entirely devoted to the proof of the main theorem in the form of
Theorem \ref{main2}. Actually, we first show that it can be reduced to Theorem \ref{main3} below.
Then we aim to prove this theorem by dividing the arguments into 
several claims.

Fix an arbitrary $\eta \in \S1$.
Let $r_\eta \in \Mob(\S1)$ be the rotation that sends $1$ to $\eta$.
By composing suitable rotations, we have
$$
r_{\psi \circ \varphi_0(\eta)}^{-1} \circ \psi \circ \varphi_0 \circ r_\eta=
(r_{\psi \circ \varphi_0(\eta)}^{-1} \circ \psi \circ r_{\varphi_0(\eta)})\circ (r_{\varphi_0(\eta)}^{-1} \circ \varphi_0 \circ r_\eta),
$$
and set $\varphi_0^\eta=r_{\varphi_0(\eta)}^{-1} \circ \varphi_0 \circ r_\eta$ and
$\psi^\eta=r_{\psi \circ \varphi_0(\eta)}^{-1} \circ \psi \circ r_{\varphi_0(\eta)}$. They both fix $1$.
Moreover, we can choose $u_{\psi,\eta} \in (-1,1)$ such that $\psi^\eta_0:=h_{u_{\psi,\eta}} \circ \psi^\eta$ satisfies
$(\psi^\eta_0)'_{\S1}(1)=1$. 
Note that $\psi^\eta_0(1)=1$ still holds.
Under these assumptions, we will prove the following.

\begin{theorem}\label{main3}
Suppose that $\psi^\eta_0(1)=\varphi_0^\eta(1)=1$ and $(\psi^\eta_0)'_{\S1}(1)=1$. Then there exist a constant
$t_0 \in [0,1)$ depending only on $\varphi_0$ and a constant
$\widetilde A>0$ such that
%If the $\alpha$-H\"older constant $c_\alpha(\psi^\eta)(1)$ of $\psi^\eta$ at $1$ converge 
%uniformly to $0$ independent of $\eta \in \S1$,
%then 
$$
\sup_{t \in [t_0,1), \ \eta \in \S1}\left(\frac{2}{1-t^2}\right)^{\alpha}
|\mu_{e(\psi^\eta_0 \circ \varphi_0^\eta)}(t)-\mu_{e(\varphi_0^\eta)}(t)| \leq 
\widetilde A\, \sup_{\eta \in \S1} c_\alpha(\psi^\eta_0)(1). 
$$
\end{theorem}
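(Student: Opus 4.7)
The plan is to exploit conformal naturality of the barycentric extension to move the estimate from the point $t$ to the origin. Setting $\varphi_t^\eta = h_t^{-1} \circ \varphi_0^\eta \circ h_t$ and $\psi_t^\eta = h_t^{-1} \circ \psi_0^\eta \circ h_t$, the naturality formula mentioned in Section \ref{3} yields
$$
|\mu_{e(\psi_0^\eta \circ \varphi_0^\eta)}(t) - \mu_{e(\varphi_0^\eta)}(t)|
= |\mu_{e(\psi_t^\eta \circ \varphi_t^\eta)}(0) - \mu_{e(\varphi_t^\eta)}(0)|,
$$
so the task becomes to bound the right-hand side by a constant times $c_\alpha(\psi_0^\eta)(1) \cdot (1-t)^\alpha$, since the factor $(2/(1-t^2))^\alpha$ will then be absorbed.

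First I would integrate the H\"older hypothesis via Proposition \ref{setup} to get $|\psi_0^\eta(\zeta) - \zeta| \leq C|\zeta - 1|^{\alpha+1}$ with $C = \frac{(\pi/2)^{\alpha+1}}{\alpha+1} c_\alpha(\psi_0^\eta)(1)$; choosing $t_0$ close to $1$ allows us to assume $C \leq 1/4$. Lemma \ref{key} then provides a uniform estimate $\|\psi_t^\eta - \mathrm{id}\|_{\S1} \leq \mathrm{const}\cdot C(1-t)^\alpha$, independent of $\eta$. Simultaneously, Lemma \ref{akari2} (with $\varepsilon_0$ taken sufficiently small in terms of $\varphi_0$) tells us that $\varphi_t^\eta$ is uniformly close to $h_{s_\eta}$ on $\S1$, and hence $\psi_t^\eta \circ \varphi_t^\eta$ is also uniformly close to $h_{s_\eta}$.

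Next, to apply the explicit Fourier-coefficient formula for $\mu_{e(\varphi)}(0)$, we need the barycenters to lie at $0$. I would use Corollary \ref{barycenter} and Lemma \ref{rouche} to produce $g_1, g_2 \in \Mob(\D)$, both close to $h_{s_\eta}$, so that $e(g_1^{-1} \circ \varphi_t^\eta)(0) = 0$ and $e(g_2^{-1} \circ \psi_t^\eta \circ \varphi_t^\eta)(0) = 0$, and moreover so that $\|g_1^{-1} \circ g_2 - \mathrm{id}\|_{\S1}$ is bounded by a constant times $\|\psi_t^\eta - \mathrm{id}\|_{\S1}$. Writing out the Fourier coefficients $(a_1, a_{-1}, b)$ of $g_1^{-1} \circ \varphi_t^\eta$ and $(a_1', a_{-1}', b')$ of $g_2^{-1} \circ \psi_t^\eta \circ \varphi_t^\eta$, one checks that the pairwise differences are bounded by a uniform multiple of $\|g_2 - g_1\|_{\S1} + \|\psi_t^\eta - \mathrm{id}\|_{\S1}$, which is $O(C(1-t)^\alpha)$. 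A final algebraic estimate on the difference of the two fractions $(a_{-1} - \overline{a_1} b)/(a_1 - \overline{a_{-1}} b)$ produces the desired bound.

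The main obstacle is to verify that all these estimates are uniform in $\eta \in \S1$. Because $\ell_\eta = (\varphi_0^\eta)'_{\S1}(1)$ ranges over a compact subinterval of $(0, \infty)$ independent of $\eta$, the parameter $s_\eta$ stays in a compact subset of $(-1,1)$; this ensures that the approximating Möbius transformations $h_{s_\eta}$ and the normalizations $g_1, g_2$ lie in a compact family of $\Mob(\D)$, that the denominators $a_1 - \overline{a_{-1}} b$ stay uniformly bounded away from zero (they limit to the nonvanishing denominator of the formula applied to a conjugate of $h_{s_\eta}$), and that the equicontinuity of $(\widetilde\varphi_0^\eta)'$ invoked in Lemma \ref{akari2} delivers an $\eta$-independent modulus of continuity. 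Once that uniformity is secured, the factor $(1-t)^\alpha$ from Lemma \ref{key} exactly cancels $(2/(1-t^2))^\alpha$, yielding the desired constant $\widetilde A$ depending only on $\varphi_0$ and $\alpha$.
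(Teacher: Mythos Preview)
Your proposal is correct and follows essentially the same route as the paper: conformal naturality to pass to the origin, Lemma~\ref{akari2} to get $\varphi_t^\eta$ close to $h_{s_\eta}$, Proposition~\ref{setup} plus Lemma~\ref{key} to control $\psi_t^\eta$, normalization by M\"obius maps so the barycenters sit at $0$ (the paper's $g_{t,\eta}$ and $g_{\varepsilon,t,\eta}$ are exactly your $g_1^{-1}$ and $g_2^{-1}$, built via Corollary~\ref{barycenter} and Lemma~\ref{rouche}), and then the Fourier-coefficient computation. The one place where the paper is more specific than your sketch is the lower bound on the denominator $|a_1-\bar a_{-1}b|$: rather than a compactness/limit argument, the paper observes that the normalized maps are within $\pi/6$ of the identity (not of a conjugate of $h_{s_\eta}$) and invokes the quantitative bound $|a_1|^2-|a_{-1}|^2 \geq \mathrm{const}>0$ from \cite[Lemma~5.18]{P} (equivalently \cite[Lemma~3]{DE}); you should cite this result explicitly, since it is what makes the uniformity in $\eta$ transparent at that step.
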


%\begin{claim}\label{main3}
%Assume that $\psi(1)=\varphi_0(1)=1$ and ${\dot \psi}(1)=1$.
%If $\alpha$-H\"older constant $c_\alpha(\psi)(1)$ of $\psi$ at $1$ converge 
%to $0$, then  
%$$
%\sup_{0<t<1}{ \left(\frac{2}{1-t^2}\right)^{\alpha}}
%{|\mu_{e(\psi \circ \varphi_0)}(t)-\mu_{e(\varphi_0)}(t)|}\to 0.  
%$$
%\end{claim}

%\begin{claim}
%Suppose that we have a consequence that
%$$
%\sup_{t \in [0,1), \ \eta \in \S1}(1-t)^{-\alpha}|\mu_{e(\tilde \psi^\eta \circ \varphi_0^\eta)}(t)-\mu_{e(\varphi_0^\eta)}(t)| \to 0 
%$$
%%as $\tilde \psi^\eta$ converge uniformly to $\id$ in $\Diff^{1+\alpha}(\S1)$ with respect to $\eta \in \S1$. 
%as the $\alpha$-H\"older constant $c_\alpha(\tilde \psi^\eta)(1)$ of $\tilde \psi^\eta$ at $1$ converge 
%uniformly to $0$ with respect to $\eta \in \S1$.
%Then we obtain the theorem.
%\end{claim}
 
%We see that Theorem \ref{main2} follows from this theorem. 
\smallskip
\noindent
{\it Theorem \ref{main3} $\Rightarrow$ Theorem \ref{main2}.}
If $\psi$ converge to $\id$ in $\Diff^{1+\alpha}(\S1)$ as assumed in Theorem \ref{main2}, then
the $\alpha$-H\"older constant $c_\alpha(\psi)(\eta)$ of $\psi$ at $\eta$ in particular
converge to $0$ uniformly with respect to $\eta \in \S1$. 
Since $c_\alpha(\psi)(\eta)=c_\alpha(\psi^\eta)(1)$, this also converge to $0$ uniformly.
It also follows from 
the convergence of the derivative of $\psi$ that
$\psi'_{\S1}(\eta)=(\psi^\eta)'_{\S1}(1)$ converge to $1$ uniformly.
This implies that $u_{\psi,\eta}$ converge to $0$ uniformly.
Therefore, $c_\alpha(\psi^\eta_0)(1)$ also converge to $0$ uniformly with respect to $\eta \in \S1$.

The conformal naturality implies that
$$
\mu_{e(\psi^\eta_0 \circ \varphi_0^\eta)}(t)=\mu_{e(\psi \circ \varphi_0)}(z); \quad 
\mu_{e(\varphi_0^\eta)}(t)=\mu_{e(\varphi_0)}(z),
$$
for $z=t\eta \in \D$
and then the conclusion of Theorem \ref{main3} shows that
$$
\sup_{t_0 \leq |z| <1}\left(\frac{2}{1-|z|^2}\right)^{\alpha}|\mu_{e(\psi \circ \varphi_0)}(z)-\mu_{e(\varphi_0)}(z)| \to 0.
$$
On the other hand, for $z \in \D$ with $|z|<t_0$, $\mu_{e(\psi \circ \varphi_0)}(z)$ converge to $\mu_{e(\varphi_0)}(z)$
uniformly as $\psi$ converge to $\id$ uniformly, which was proved in Douady and Earle \cite[Proposition 2]{DE}.
This proves Theorem \ref{main2}.
\qed
\medskip

We consider the conjugate 
$\varphi_t^\eta=h_t^{-1} \circ \varphi_0^\eta \circ h_t$ for $t \in (-1,1)$. 
Set $(\varphi_0^\eta)'_{\S1}(1)=\ell_\eta$ and take $h_{s_\eta}$ with
$(1-s_\eta)/(1+s_\eta)=\ell_\eta$.
Since $\ell_\eta=(\varphi_0)'_{\S1}(\eta)$, there exists some constant $L \geq 1$ depending only on $\varphi_0$ such that
$L^{-1} \leq \ell_\eta \leq L$ for every $\eta \in \S1$.
For a certain constant $\varepsilon_0 \in (0,2]$, which will be fixed later, choose $\delta_0>0$ as in Lemma \ref{akari2}. 
{\it Now we consider any $t>0$ with $0<1-t \leq \delta_0$.}

\begin{Claim}
Under the above assumption, we have
$$
\Vert h_{s_\eta}^{-1} \circ \varphi_t^\eta -{\rm id} \Vert_{\S1} \leq \pi L\varepsilon_0/2.
$$
Moreover, the barycenter $w_{t,\eta}$ of
$h_{s_\eta}^{-1} \circ \varphi_t^\eta$ satisfies
$|w_{t,\eta}| \leq \pi L\varepsilon_0$.
\end{Claim}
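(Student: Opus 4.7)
The plan is to combine Lemma~\ref{akari2} with Corollary~\ref{barycenter}, using a Lipschitz estimate on $h_{s_\eta}^{-1}$ as the bridge.

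First I would invoke Lemma~\ref{akari2}, applicable since $1-t \leq \delta_0$, to obtain the Euclidean bound $|\varphi_t^\eta(\zeta)-h_{s_\eta}(\zeta)| \leq \varepsilon_0$ uniformly in $\zeta,\eta\in\S1$. Passing to arc length via the elementary inequality $d_{\S1}(\zeta_1,\zeta_2) \leq (\pi/2)|\zeta_1-\zeta_2|$ yields $d_{\S1}(\varphi_t^\eta(\zeta),h_{s_\eta}(\zeta)) \leq (\pi/2)\varepsilon_0$. Writing $\zeta = h_{s_\eta}^{-1}(h_{s_\eta}(\zeta))$, it then suffices to show that $h_{s_\eta}^{-1}$ is $L$-Lipschitz with respect to $d_{\S1}$, uniformly in $\eta\in\S1$.

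For the Lipschitz bound, I would exploit the uniform control $\ell_\eta = (\varphi_0)'_{\S1}(\eta) \in [L^{-1},L]$, which forces $s_\eta = (1-\ell_\eta)/(1+\ell_\eta)$ to satisfy $|s_\eta|\leq (L-1)/(L+1)$. A short computation of the derivative of the Möbius map $h_{s_\eta}^{-1}=h_{-s_\eta}$ along $\S1$ then yields $(h_{s_\eta}^{-1})'_{\S1}(\zeta) \leq (1+|s_\eta|)/(1-|s_\eta|) \leq L$ for all $\zeta\in\S1$. Combining these estimates gives
$$
d_{\S1}(h_{s_\eta}^{-1}\circ\varphi_t^\eta(\zeta),\,\zeta) \leq L\cdot d_{\S1}(\varphi_t^\eta(\zeta),\,h_{s_\eta}(\zeta)) \leq \pi L\varepsilon_0/2,
$$
which is the first claim after taking the supremum over $\zeta$.

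The second assertion will then follow at once from Corollary~\ref{barycenter} applied to the self-homeomorphism $h_{s_\eta}^{-1}\circ\varphi_t^\eta$ with $\varepsilon = \pi L\varepsilon_0/2$, yielding $|w_{t,\eta}| \leq 2\varepsilon = \pi L\varepsilon_0$. (The non-strict version of the corollary is obtained from the strict one by enlarging $\varepsilon$ slightly and taking a limit.) No step is genuinely hard here; the only point requiring care is that the Lipschitz constant $L$ be chosen uniformly in $\eta$, and this is arranged by the uniform two-sided bound on $\ell_\eta$ coming from the compactness of $\S1$ and the $C^1$-regularity of $\varphi_0$.
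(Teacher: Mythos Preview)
Your argument is correct and follows essentially the same route as the paper: invoke Lemma~\ref{akari2}, convert the Euclidean estimate to arc length via $d_{\S1}\leq(\pi/2)|\cdot|$, use the derivative bound $|(h_{s_\eta}^{-1})'(\zeta)|\leq(1+|s_\eta|)/(1-|s_\eta|)\leq L$ (which the paper cites as Proposition~\ref{claims}\,(2) applied to $w=s_\eta$), and then apply Corollary~\ref{barycenter}. Your write-up is in fact slightly more careful than the paper's, since you spell out why $(1+|s_\eta|)/(1-|s_\eta|)\leq L$ and you address the strict versus non-strict inequality in Corollary~\ref{barycenter}.
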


\begin{proof}
Lemma \ref{akari2} asserts that
if $1-t \leq \delta_0$ then
$|\varphi_t^\eta(\zeta)-h_{s_\eta}(\zeta)| \leq \varepsilon_0$
for every $\zeta \in \S1$ and for every $\eta \in \S1$.
This condition implies that
$d_{\S1}(\varphi_t^\eta(\zeta),h_{s_\eta}(\zeta)) \leq \pi \varepsilon_0/2$.
Since $|(h_{s_\eta}^{-1})'(\zeta)| 
%\leq (1+|s_\eta|)/(1-|s_\eta|) 
\leq L$ by Proposition \ref{claims} (2)
applied to $w=s_\eta$, 
we have 
$d_{\S1}(h_{s_\eta}^{-1} \circ \varphi_t^\eta(\zeta),\zeta) \leq \pi L \varepsilon_0/2$ for every $\zeta \in \S1$.
This means the first statement.
%$\Vert h_{s_\eta}^{-1} \circ \varphi_t^\eta-\id \Vert_{\S1} \leq  \pi L \varepsilon_0/2$.
Then Corollary \ref{barycenter} implies that $|w_{t,\eta}| \leq \pi L \varepsilon_0$.
\end{proof}

%Fix $t \geq 0$ with $1-t \leq \delta_0$. Let $w_{t,\eta} \in \D$ be the barycenter of
%$h_{s_\eta}^{-1} \circ \varphi_t^\eta \in \Homeo(\S1)$. Namely,
%$$
%w_{t,\eta}=e(h_{s_\eta}^{-1} \circ \varphi_t^\eta)(0).
%$$
%By Corollary \ref{barycenter} and the above claim, we have $|w_{t,\eta}| \leq \pi S_0\varepsilon_0$.
Using this barycenter $w_{t,\eta}$, we set
$$
j_{t,\eta}(z)=\frac{z-w_{t,\eta}}{1-\overline{w_{t,\eta}} z}.
$$
Furthermore, we define $g_{t,\eta}=j_{t,\eta} \circ h_{s_\eta}^{-1} \in \Mob(\D)$.
%Since $|g_{t,\eta}(0)|=|g_{t,\eta}^{-1}(0)|=|h_{s_\eta}(w_{t,\eta})|$, there is $r>0$ depending only on $S_0$ and $\varepsilon_0$
%such that $|g_{t,\eta}(0)| \leq r$. Set $R=(1+r)/(1-r)$.
Then the constant $\varepsilon_0 \in (0,2]$ is given as follows. 
First, we prepare the following inequality:
\begin{align*}
\Vert g_{t,\eta} \circ \varphi_t^\eta-{\rm id}\Vert_{\S1}&=\Vert j_{t,\eta} \circ h_{s_\eta}^{-1} \circ \varphi_t^\eta-{\rm id} \Vert_{\S1}\\
&\leq \Vert j_{t,\eta} \circ h_{s_\eta}^{-1} \circ \varphi_t^\eta-h_{s_\eta}^{-1} \circ \varphi_t^\eta\Vert_{\S1}+
\Vert h_{s_\eta}^{-1} \circ \varphi_t^\eta-{\rm id}\Vert_{\S1}\\
&\leq \frac{2 \cdot \pi L \varepsilon_0}{1-\pi L \varepsilon_0}+\frac{\pi L \varepsilon_0}{2}.
\end{align*}
Here the last inequality is due to Proposition \ref{claims} (1) and Claim 1.
We set the last term in the above inequalities as $\tilde \varepsilon_0$.
{\it Now we choose $\varepsilon_0>0$ so that $0<\tilde \varepsilon_0 \leq 1/112$.}
This in particular gives $|w_{t,\eta}| \leq 1/280$ by Claim 1.

\begin{Claim}
%The barycenter of $g_{t,\eta} \circ \varphi_t^\eta$ is $0$. 
The average function of $g_{t,\eta} \circ \varphi_t^\eta$ given by 
$$
\xi(w)=\frac{1}{2\pi} \int_{\S1} 
\frac{g_{t,\eta} \circ \varphi_t^\eta(\zeta)-w}{1-\bar w g_{t,\eta} \circ \varphi_t^\eta(\zeta)}\,|d\zeta|
$$
satisfies $\xi(0)=0$ and
$|\xi(w)| \geq |w|/2$
for $|w| \leq 1/2$. 
\end{Claim}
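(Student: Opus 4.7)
The plan is to verify the two assertions $\xi(0)=0$ and $|\xi(w)|\ge|w|/2$ separately. The first is a consequence of the conformal naturality of the barycentric extension together with the definition of $j_{t,\eta}$, while the second is obtained by applying Lemma \ref{lower} to $g_{t,\eta}\circ\varphi_t^\eta$, using the closeness estimate already set up before the claim.

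For the normalization $\xi(0)=0$: observe that $\xi$ is the average function of $g_{t,\eta}\circ\varphi_t^\eta$, so $\xi(0)=0$ is equivalent to $0$ being the barycenter of $g_{t,\eta}\circ\varphi_t^\eta$, i.e.\ $e(g_{t,\eta}\circ\varphi_t^\eta)(0)=0$. By definition, $w_{t,\eta}=e(h_{s_\eta}^{-1}\circ\varphi_t^\eta)(0)$, and $j_{t,\eta}$ was chosen precisely so that $j_{t,\eta}(w_{t,\eta})=0$. The conformal naturality identity $e(g\circ\varphi)=g\circ e(\varphi)$ from Section \ref{2}, applied with $g=j_{t,\eta}$ and $\varphi=h_{s_\eta}^{-1}\circ\varphi_t^\eta$, then yields
$$
e(g_{t,\eta}\circ\varphi_t^\eta)(0)=j_{t,\eta}\bigl(e(h_{s_\eta}^{-1}\circ\varphi_t^\eta)(0)\bigr)=j_{t,\eta}(w_{t,\eta})=0,
$$
which is what we want.

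For the lower bound, I would apply Lemma \ref{lower} directly to $\varphi=g_{t,\eta}\circ\varphi_t^\eta$. The inequality displayed just before the statement of the claim shows $\Vert g_{t,\eta}\circ\varphi_t^\eta-\id\Vert_{\S1}\le\tilde\varepsilon_0$, and the calibration $\tilde\varepsilon_0\le 1/112$ of $\varepsilon_0$ was imposed precisely so that this estimate would suffice. Combining this with $\xi(0)=0$ from the previous step, Lemma \ref{lower} gives
$$
|\xi(w)|\ge(1-56\tilde\varepsilon_0)\,|w|\ge\tfrac{1}{2}|w| \qquad (|w|\le 1/2),
$$
since $1-56\cdot(1/112)=1/2$.

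Both parts therefore reduce to a direct application of earlier preparation, so I do not anticipate a technical obstacle within the claim itself. The only delicate point is upstream, in the choice of $\varepsilon_0$: one must ensure that the post-composition by $j_{t,\eta}$ that recenters the barycenter at $0$ does not destroy the closeness of $h_{s_\eta}^{-1}\circ\varphi_t^\eta$ to the identity by more than the factor quantified in Proposition \ref{claims}(1) and Claim 1. Since this calibration has already been carried out to guarantee $\tilde\varepsilon_0\le 1/112$, the proof of Claim 2 amounts to assembling these ingredients.
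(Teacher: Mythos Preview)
Your proposal is correct and follows essentially the same approach as the paper: conformal naturality applied to $j_{t,\eta}$ gives $e(g_{t,\eta}\circ\varphi_t^\eta)(0)=j_{t,\eta}(w_{t,\eta})=0$, and then Lemma~\ref{lower} with $\Vert g_{t,\eta}\circ\varphi_t^\eta-\id\Vert_{\S1}\le\tilde\varepsilon_0\le 1/112$ yields $|\xi(w)|\ge(1-56\tilde\varepsilon_0)|w|\ge|w|/2$. The paper's proof is identical in structure, just more tersely stated.
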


\begin{proof}
The barycenter of $g_{t,\eta} \circ \varphi_t^\eta$ is
$$
e(g_{t,\eta} \circ \varphi_t^\eta)(0)=j_{t,\eta}(e(h_{s_\eta}^{-1} \circ \varphi_t^\eta)(0))=j_{t,\eta}(w_{t,\eta})=0.
$$
This means that $\xi(0)=0$.
Then Lemma \ref{lower} with $\Vert g_{t,\eta} \circ \varphi_t^\eta-{\rm id}\Vert_{\S1} \leq \tilde \varepsilon_0$ implies that
$$
|\xi(w)| \geq (1-56 \tilde \varepsilon_0)|w| \geq |w|/2
$$
for $|w| \leq 1/2$. 
\end{proof}

For the same $t>0$ with $0<1-t \leq \delta_0$ as above, 
consider the conjugate $\psi_t^\eta=h_t^{-1} \circ \psi^\eta_0 \circ h_t$ and
the decomposition
$$
g_{t,\eta} \circ \psi_t^\eta \circ \varphi_t^\eta=
(g_{t,\eta} \circ \psi_t^\eta \circ  g_{t,\eta}^{-1}) \circ (g_{t,\eta} \circ \varphi_t^\eta).
$$
Since $|g_{t,\eta}(0)|=|g_{t,\eta}^{-1}(0)|=|h_{s_\eta}(w_{t,\eta})|$ and $|w_{t,\eta}| \leq 1/280$, 
there is $r \in [0,1)$ depending only on $L$
such that $|g_{t,\eta}(0)| \leq r$. Set $R=(1+r)/(1-r)$.
{\it Take $\varepsilon>0$ arbitrarily with $\varepsilon \leq 1/(42R)$, and assume hereafter that
$\Vert \psi_t^\eta-\id \Vert_{\S1} <\varepsilon$.}

\begin{Claim}
%Take $\varepsilon>0$ arbitrarily with $\varepsilon \leq 1/(12R)$.
%Suppose that $d_{\S1}(\psi_t^\eta(\zeta),\zeta)<\varepsilon$ for every $\zeta \in \S1$. Then
The barycenter $w_\varepsilon$ of $g_{t,\eta} \circ \psi_t^\eta \circ \varphi_t^\eta$
satisfies $|w_\varepsilon|<6R\varepsilon$.
%the average function of $g_{t,\eta} \circ \psi_t^\eta \circ \varphi_t^\eta$, which is 
%$$
%\tilde \xi(w)=\frac{1}{2\pi} \int_{\S1} 
%\frac{g_{t,\eta} \circ \psi_t^\eta \circ \varphi_t^\eta(\zeta)-w}{1-\bar w g_{t,\eta} \circ \psi_t^\eta \circ \varphi_t^\eta(\zeta)}\,|d\zeta|,
%$$
%has a zero $w_{\varepsilon}$ in $|w|<6R^2\varepsilon$.
\end{Claim}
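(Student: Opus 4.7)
The plan is to view $g_{t,\eta}\circ\psi_t^\eta\circ\varphi_t^\eta$ as a small perturbation of the map $g_{t,\eta}\circ\varphi_t^\eta$ already studied in Claim 2, and then to apply the Rouch\'e-type statement Lemma \ref{rouche}. The two ingredients I need are a Lipschitz control of $g_{t,\eta}$ on $\S1$ and the uniform closeness of $\psi_t^\eta$ to the identity.

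First I would bound the derivative of $g_{t,\eta}$ along $\S1$. Since $g_{t,\eta}$ is a M\"obius self-map of $\D$ with $|g_{t,\eta}^{-1}(0)|=|h_{s_\eta}(w_{t,\eta})|\leq r$, writing it as a rotation post-composed with $\gamma_a$ for $a=g_{t,\eta}^{-1}(0)$, Proposition \ref{claims} (2) yields $|g_{t,\eta}'(\zeta)|\leq(1+r)/(1-r)=R$ for every $\zeta\in\S1$. Next I would estimate
$$
\Vert g_{t,\eta}\circ\psi_t^\eta\circ\varphi_t^\eta - g_{t,\eta}\circ\varphi_t^\eta\Vert_{\S1}.
$$
Because $\varphi_t^\eta$ is a self-homeomorphism of $\S1$, a reparametrization gives $\Vert\psi_t^\eta\circ\varphi_t^\eta-\varphi_t^\eta\Vert_{\S1}=\Vert\psi_t^\eta-\id\Vert_{\S1}<\varepsilon$. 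Integrating $|g_{t,\eta}'|$ along the connecting circular arcs and using the bound $R$ from above then gives
$$
\Vert g_{t,\eta}\circ\psi_t^\eta\circ\varphi_t^\eta - g_{t,\eta}\circ\varphi_t^\eta\Vert_{\S1}\leq R\,\Vert\psi_t^\eta-\id\Vert_{\S1}<R\varepsilon.
$$

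Finally I would apply Lemma \ref{rouche} with unperturbed map $\varphi_0 = g_{t,\eta}\circ\varphi_t^\eta$, whose average vanishes at $0$ and satisfies $|\xi(w)|\geq|w|/2$ on $|w|\leq 1/2$ by Claim 2, and perturbed map $\varphi_1 = g_{t,\eta}\circ\psi_t^\eta\circ\varphi_t^\eta$. The smallness hypothesis $R\varepsilon\leq 1/12$ is ensured by our standing assumption $\varepsilon\leq 1/(42R)$. The conclusion of the lemma is that the barycenter $w_\varepsilon$ of $\varphi_1$ lies in $|w|<6(R\varepsilon)=6R\varepsilon$, which is exactly the asserted inequality.

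Since the claim reduces to combining Lemma \ref{rouche}, Claim 2, and a standard derivative bound for a M\"obius transformation, I do not expect a genuine obstacle. The only point requiring care is bookkeeping: the perturbation must act on $\varphi_t^\eta$ from the inside (so $\varphi_t^\eta$ absorbs the change of variable), and the Lipschitz constant of the outer Möbius factor $g_{t,\eta}$ must be controlled uniformly in $\eta$ and in $t$ through the constant $R$, which is why $r$ was arranged to depend only on $L$.
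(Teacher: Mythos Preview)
Your proof is correct and follows essentially the same route as the paper: both bound $\Vert g_{t,\eta}\circ\psi_t^\eta\circ\varphi_t^\eta - g_{t,\eta}\circ\varphi_t^\eta\Vert_{\S1}<R\varepsilon$ via the derivative bound $|g_{t,\eta}'|\leq R$ from Proposition~\ref{claims}(2) (using $|g_{t,\eta}^{-1}(0)|=|g_{t,\eta}(0)|\leq r$), and then invoke Claim~2 together with Lemma~\ref{rouche}. The paper phrases the first step through the conjugate $g_{t,\eta}\circ\psi_t^\eta\circ g_{t,\eta}^{-1}$ before substituting $\zeta\mapsto g_{t,\eta}\circ\varphi_t^\eta(\zeta)$, but this is only a cosmetic difference from your direct Lipschitz estimate.
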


\begin{proof}
Since $d_{\S1}(\psi_t^\eta(\zeta),\zeta)<\varepsilon$ and $|g_{t,\eta}(0)| \leq r$, we see from Proposition \ref{claims} (2) that
$$
d_{\S1}(g_{t,\eta} \circ \psi_t^\eta \circ  g_{t,\eta}^{-1}(\zeta),\zeta) < R \varepsilon
$$
for every $\zeta \in \S1$.
By replacing $\zeta$ with $g_{t,\eta} \circ \varphi_t^\eta (\zeta)$, we have
$$
\Vert g_{t,\eta} \circ \psi_t^\eta \circ  \varphi_t^\eta-g_{t,\eta} \circ \varphi_t^\eta \Vert_{\S1} < R \varepsilon 
\ (\leq 1/42).
%d_{\S1}(g_{t,\eta} \circ \psi_t^\eta \circ  \varphi_t^\eta (\zeta),g_{t,\eta} \circ \varphi_t^\eta (\zeta)) < R \varepsilon.
$$
Since $g_{t,\eta} \circ \varphi_t^\eta$ satisfies the properties in Claim 2,
Lemma \ref{rouche} asserts that $g_{t,\eta} \circ \psi_t^\eta \circ \varphi_t^\eta$
%$$
%\frac{1}{2\pi} \int_{\S1} 
%\frac{g_{t,\eta} \circ \psi_t^\eta \circ \varphi_t^\eta(\zeta)-w}{1-\bar w g_{t,\eta} \circ \psi_t^\eta \circ \varphi_t^\eta(\zeta)}\,|d\zeta|
%$$
has the barycenter in $|w| <6R \varepsilon$. 
%This is the barycenter of $g_{t,\eta} \circ \psi_t^\eta \circ \varphi_t^\eta$.
\end{proof}

Using this barycenter $w_\varepsilon$, we set
$$
j_{\varepsilon}(z)=\frac{z-w_{\varepsilon}}{1-\overline{w_{\varepsilon}} z}.
$$
Furthermore, we define $g_{\varepsilon,t,\eta}=j_{\varepsilon} \circ g_{t,\eta} \in \Mob(\D)$.
Then the barycenter of $g_{\varepsilon,t,\eta} \circ \psi_t^\eta \circ \varphi_t^\eta$ is $0$. This is because
$$
e(g_{\varepsilon,t,\eta} \circ \psi_t^\eta \circ \varphi_t^\eta)(0)
=j_{\varepsilon}(e(g_{t,\eta} \circ \psi_t^\eta \circ \varphi_t^\eta)(0))=j_{\varepsilon}(w_{\varepsilon})=0.
$$
%Since 
%$$
%|g_{\varepsilon,t,\eta}(0)|=|g_{\varepsilon,t,\eta}^{-1}(0)|
%=|\tilde j_{\varepsilon}(g_{t,\eta}(0))|,
%$$
%there is $r_\varepsilon>0$ depending on $\varepsilon$ and $r$
%such that $|g_{\varepsilon,t,\eta}(0)| \leq r_\varepsilon$. Set $R_\varepsilon=(1+r_\varepsilon)/(1-r_\varepsilon)$.

\begin{Claim}
$\Vert g_{\varepsilon,t,\eta} \circ \psi_t^\eta \circ \varphi_t^\eta- g_{t,\eta} \circ \varphi_t^\eta \Vert_{\S1} 
< 15R\varepsilon \leq 5/14$.
%%On the other hand,
%%there is a constant $M$ depending on $R$ such that
%For every $\zeta \in \S1$,
%$$
%d_{\S1}(g_{\varepsilon,t,\eta} \circ \psi_t^\eta \circ \varphi_t^\eta(\zeta) ,g_{t,\eta} \circ \varphi_t^\eta(\zeta))=
%d_{\S1}(g_{\varepsilon,t,\eta} \circ \psi_t^\eta  \circ g_{t,\eta}^{-1}(\zeta),\zeta) \leq 16R\varepsilon.
%$$
%%$$
%%d_{\S1}(g_{\varepsilon,t,\eta} \circ \psi_t^\eta  \circ g_{t,\eta}^{-1}(\zeta),\zeta) \leq 
%%\frac{2\cdot 6R^2 \varepsilon}{1-6R^2 \varepsilon}+R^2\varepsilon.
%%$$
\end{Claim}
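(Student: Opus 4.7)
The strategy is to split via the triangle inequality after introducing the intermediate map $g_{t,\eta}\circ\psi_t^\eta\circ\varphi_t^\eta$. Since $g_{\varepsilon,t,\eta}=j_\varepsilon\circ g_{t,\eta}$, setting $G=g_{t,\eta}\circ\psi_t^\eta\circ\varphi_t^\eta$ and $H=g_{t,\eta}\circ\varphi_t^\eta$ gives
$$
\Vert g_{\varepsilon,t,\eta}\circ\psi_t^\eta\circ\varphi_t^\eta - g_{t,\eta}\circ\varphi_t^\eta\Vert_{\S1}
\le \Vert j_\varepsilon\circ G - G\Vert_{\S1} + \Vert G - H\Vert_{\S1},
$$
and I would estimate the two summands separately using the pieces already assembled in Claims~1--3.

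The second summand is essentially free: the argument inside the proof of Claim~3 already shows $\Vert G-H\Vert_{\S1}<R\varepsilon$. Indeed, the hypothesis $\Vert\psi_t^\eta-\id\Vert_{\S1}<\varepsilon$ together with the $R$-Lipschitz bound for $g_{t,\eta}|_{\S1}$ (Proposition \ref{claims}(2) applied to $|g_{t,\eta}^{-1}(0)|=|h_{s_\eta}(w_{t,\eta})|\le r$) gives exactly this, by replacing $\zeta$ with $\varphi_t^\eta(\zeta)$ as in Claim~3.

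For the first summand, since $G$ takes values in $\S1$, I would bound
$$
\Vert j_\varepsilon\circ G-G\Vert_{\S1}\le \sup_{\zeta\in\S1}d_{\S1}(j_\varepsilon(\zeta),\zeta).
$$
Now $j_\varepsilon=\gamma_{w_\varepsilon}$ with $|w_\varepsilon|<6R\varepsilon$ by Claim~3, and $\varepsilon\le 1/(42R)$ forces $|w_\varepsilon|<1/7$. Proposition \ref{claims}(1) gives the chord estimate $|j_\varepsilon(\zeta)-\zeta|\le 2|w_\varepsilon|/(1-|w_\varepsilon|)$, which a direct parametrization on $\S1$ (yielding the sharp identity $\sup_\zeta d_{\S1}(\gamma_w(\zeta),\zeta)=2\arcsin|w|$) converts into the arc-length bound $\le 14R\varepsilon$ in the regime $|w_\varepsilon|\le 1/7$.

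Adding the two pieces gives the desired $<14R\varepsilon+R\varepsilon=15R\varepsilon$, and the hypothesis $\varepsilon\le 1/(42R)$ then yields $15R\varepsilon\le 15/42=5/14$. The only delicate point in the plan is the chord-to-arc conversion in the first summand: one needs the smallness $|w_\varepsilon|\le 1/7$ to prevent the naive factor $\pi/2$ from blowing the total beyond $15R\varepsilon$, which is why the constant $1/(42R)$ in the standing hypothesis on $\varepsilon$ was chosen as it was.
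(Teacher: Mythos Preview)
Your proof is correct and follows essentially the same route as the paper: the same triangle-inequality split through the intermediate $g_{t,\eta}\circ\psi_t^\eta\circ\varphi_t^\eta$, the same $R\varepsilon$ bound on the second summand from Claim~3, and the same use of Proposition~\ref{claims}(1) with $|w_\varepsilon|<6R\varepsilon$ for the first. If anything you are slightly more careful, since you address the chord-to-arc conversion explicitly via $\sup_\zeta d_{\S1}(\gamma_w(\zeta),\zeta)=2\arcsin|w|$, whereas the paper applies the Euclidean bound $2|w_\varepsilon|/(1-|w_\varepsilon|)$ directly as an arc-length bound.
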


\begin{proof}
%The barycenter of $g_{\varepsilon,t,\eta} \circ \psi_t^\eta \circ \varphi_t^\eta$ is
%$$
%e(g_{\varepsilon,t,\eta} \circ \psi_t^\eta \circ \varphi_t^\eta)(0)
%=\tilde j_{\varepsilon}(e(g_{t,\eta} \circ \psi_t^\eta \circ \varphi_t^\eta)(0))=\tilde j_{\varepsilon}(w_{\varepsilon})=0.
%$$
We have obtained
$d_{\S1}(g_{t,\eta} \circ \psi_t^\eta \circ  g_{t,\eta}^{-1}(\zeta),\zeta) < R \varepsilon$
for every $\zeta \in \S1$.
Then Proposition \ref{claims} (1) and Claim 3 yield
\begin{align*}
&\quad \ d_{\S1}(g_{\varepsilon,t,\eta} \circ \psi_t^\eta \circ \varphi_t^\eta(\zeta) ,g_{t,\eta} \circ \varphi_t^\eta(\zeta))\\
&= d_{\S1}(g_{\varepsilon,t,\eta} \circ \psi_t^\eta  \circ g_{t,\eta}^{-1}(\zeta),\zeta)\\
&\leq
d_{\S1}(j_\varepsilon \circ g_{t,\eta} \circ \psi_t^\eta \circ g_{t,\eta}^{-1}(\zeta),
g_{t,\eta} \circ \psi_t^\eta \circ g_{t,\eta}^{-1}(\zeta))+
d_{\S1}(g_{t,\eta} \circ \psi_t^\eta \circ g_{t,\eta}^{-1}(\zeta),\zeta)\\
&< \frac{2 \cdot 6R\varepsilon}{1-6R \varepsilon}+R\varepsilon.
\end{align*}
Since we have chosen $\varepsilon>0$ so that $\varepsilon \leq 1/(42R)$,
the last term in the above inequality is bounded by $15R\varepsilon \leq 5/14$.
%This shows that we can find the constant $M$ as in the statement.
\end{proof}

We will compute the complex dilatation of the conformally natural extensions of
$\varphi_t^\eta$ and $\psi_t^\eta \circ \varphi_t^\eta$ at $0 \in \D$ and estimate their difference.
For this purpose, we replace them with $g_{t,\eta} \circ \varphi_t^\eta$ and
$g_{\varepsilon,t,\eta} \circ \psi_t^\eta \circ \varphi_t^\eta$ respectively.
This is possible because the post composition of a M\"obius transformation does not affect the complex dilatation.
In addition, since the barycenters of both $g_{t,\eta} \circ \varphi_t^\eta$ and
$g_{\varepsilon,t,\eta} \circ \psi_t^\eta \circ \varphi_t^\eta$ are $0$ as we have seen above,
we can represent the complex dilatations explicitly in terms of the Fourier coefficients for
$g_{t,\eta} \circ \varphi_t^\eta$ and
$g_{\varepsilon,t,\eta} \circ \psi_t^\eta \circ \varphi_t^\eta$ as mentioned in Section 3. Namely,
$$
\mu_{e(\varphi_t^\eta)}(0)=\mu_{e(g_{t,\eta} \circ \varphi_t^\eta)}(0)=\frac{a_{-1}-\bar a_1b}{a_1-\bar a_{-1}b},
$$
where
$$
a_1=\frac{1}{2\pi}\int_{\S1} \bar \zeta (g_{t,\eta} \circ \varphi_t^\eta)(\zeta)\,|d\zeta|;\quad
a_{-1}=\frac{1}{2\pi}\int_{\S1} \zeta (g_{t,\eta} \circ \varphi_t^\eta)(\zeta)\,|d\zeta|;\quad
$$
$$
b=\frac{-1}{2\pi}\int_{\S1} (g_{t,\eta} \circ \varphi_t^\eta)(\zeta)^2\,|d\zeta|.
$$
Similarly,
$$
\mu_{e(\psi_t^\eta \circ \varphi_t^\eta)}(0)=\mu_{e(g_{\varepsilon,t,\eta} \circ \psi_t^\eta \circ \varphi_t^\eta)}(0)
=\frac{a'_{-1}-\bar a'_1b'}{a'_1-\bar a'_{-1}b'},
$$
where
$$
a'_1=\frac{1}{2\pi}\int_{\S1} \bar \zeta (g_{\varepsilon,t,\eta} \circ \psi_t^\eta \circ \varphi_t^\eta)(\zeta)\,|d\zeta|;\quad
a'_{-1}=\frac{1}{2\pi}\int_{\S1} \zeta (g_{\varepsilon,t,\eta} \circ \psi_t^\eta \circ \varphi_t^\eta)(\zeta)\,|d\zeta|;\quad
$$
$$
b'=\frac{-1}{2\pi}\int_{\S1} (g_{\varepsilon,t,\eta} \circ \psi_t^\eta \circ \varphi_t^\eta)(\zeta)^2\,|d\zeta|.
$$

In Claim 4, we have obtained the difference between $g_{t,\eta} \circ \varphi_t^\eta$ and
$g_{\varepsilon,t,\eta} \circ \psi_t^\eta \circ \varphi_t^\eta$.
Hence it follows that
\begin{align*}
|a_1-a'_1| &\leq \frac{1}{2\pi} \int_{\S1} |\bar \zeta| 15R\varepsilon\,|d\zeta|=15R\varepsilon;\\
|a_{-1}-a'_{-1}| &\leq \frac{1}{2\pi} \int_{\S1} |\zeta| 15R\varepsilon\,|d\zeta|=15R\varepsilon;\\
|b-b'|&\leq \frac{1}{2\pi} \int_{\S1} 2\cdot 15R\varepsilon\,|d\zeta|=30R\varepsilon.
\end{align*}
On the other hand,
$$
|\mu_{e(\varphi_t^\eta)}(0)-\mu_{e(\psi_t^\eta \circ \varphi_t^\eta)}(0)|=
\left|\frac{a_{-1}-\bar a_1b}{a_1-\bar a_{-1}b}-\frac{a'_{-1}-\bar a'_1b'}{a'_1-\bar a'_{-1}b'}\right |
=:\frac{N}{|a_1-\bar a_{-1}b| \cdot |a'_1-\bar a'_{-1}b'|}.
$$
Here, simple computation and the above inequalities show that
the numerator $N$ is estimated from above by a positive constant multiple of $\varepsilon$.

For the estimate of the denominator from below, we first consider the following:
\begin{align*}
|a_1-\bar a_{-1}b| &\geq |a_1|-|a_{-1}||b| \geq  |a_1|-|a_{-1}|;\\
|a'_1-\bar a'_{-1}b'| &\geq |a'_1|-|a'_{-1}||b'| \geq  |a'_1|-|a'_{-1}|.
\end{align*}
We set
$\delta=|a_1|^2-|a_{-1}|^2$ and $\delta'=|a'_1|^2-|a'_{-1}|^2$.
Then
$$
|a_1|-|a_{-1}| =\frac{\delta}{|a_1|+|a_{-1}|} \geq \frac{\delta}{2}; 
\quad |a'_1|-|a'_{-1}| =\frac{\delta}{|a'_1|+|a'_{-1}|} \geq \frac{\delta'}{2}.
$$
Here, we see that $g_{t,\eta} \circ \varphi_t^\eta$ and $g_{\varepsilon,t,\eta} \circ \psi_t^\eta \circ \varphi_t^\eta$
are uniformly close to $\id$ within $\pi/6$. Indeed, the definitions of $\tilde \varepsilon_0$ and Claim 4 give that
\begin{align*}
%& d_{\S1}(g_{t,\eta} \circ \varphi_t^\eta(\zeta),\zeta) \leq \tilde \varepsilon_0 \leq 1/112;\\
& \Vert g_{t,\eta} \circ \varphi_t^\eta-\id \Vert_{\S1} \leq \tilde \varepsilon_0 \leq 1/112;\\
%& d_{\S1}(g_{\varepsilon,t,\eta} \circ \psi_t^\eta \circ \varphi_t^\eta(\zeta) ,g_{t,\eta} \circ \varphi_t^\eta(\zeta))
& \Vert g_{\varepsilon,t,\eta} \circ \psi_t^\eta \circ \varphi_t^\eta-g_{t,\eta} \circ \varphi_t^\eta \Vert_{\S1}
\leq 15R\varepsilon \leq 5/14.
\end{align*}
Then, by Pommerenke \cite[Lemma 5.18]{P} interpreting \cite[Lemma 3]{DE}, we have that
$\delta$ and $\delta'$ are uniformly bounded away from $0$.
Thus, we can find some absolute constant $A>0$ such that
$$
|\mu_{e(\varphi_t^\eta)}(0)-\mu_{e(\psi_t^\eta \circ \varphi_t^\eta)}(0)| \leq A\varepsilon
$$
for every $\eta \in \S1$ and every $t \in [1-\delta_0,1)$. 

The conformal naturality again yields
\begin{align*}
& \mu_{e(\varphi_t^\eta)}(0)=\mu_{e(h_t^{-1} \circ \varphi_0^\eta \circ h_t)}(0)
=\mu_{e(h_t^{-1} \circ \varphi_0^\eta)}(h_t(0)) \frac{\overline{h'_t(0)}}{h'_t(0)}
=\mu_{e(\varphi_0^\eta)}(t) \frac{\overline{h'_t(0)}}{h'_t(0)};\\
& \mu_{e(\psi_t^\eta \circ \varphi_t^\eta)}(0)=\mu_{e(h_t^{-1} \circ \psi^\eta_0 \circ \varphi_0^\eta \circ h_t)}(0)
=\mu_{e(h_t^{-1} \circ \psi^\eta_0 \circ \varphi_0^\eta)}(h_t(0)) \frac{\overline{h'_t(0)}}{h'_t(0)}
=\mu_{e(\psi^\eta_0 \circ \varphi_0^\eta)}(t) \frac{\overline{h'_t(0)}}{h'_t(0)}.
\end{align*}
Therefore,
$$
|\mu_{e(\varphi_0^\eta)}(t)-\mu_{e(\psi^\eta_0 \circ \varphi_0^\eta)}(t)|=
|\mu_{e(\varphi_t^\eta)}(0)-\mu_{e(\psi_t^\eta \circ \varphi_t^\eta)}(0)| \leq A\varepsilon
$$
for every $\eta \in \S1$ and every $t \in [1-\delta_0,1)$. 

The assumption for this conclusion was that
$\Vert \psi_t^\eta-\id \Vert_{\S1} <\varepsilon$ for $\varepsilon \leq 1/(42R)$.
%$d_{\S1}(\psi_t^\eta(\zeta),\zeta)<\varepsilon$ for every $\zeta \in \S1$. 
Proposition \ref{setup} and
Lemma \ref{key} tell us that if we choose $t$ and $\epsilon:=2\varepsilon/\pi$ in the relation
$1-t = \frac{1}{4}(\epsilon/(4C))^{1/\alpha}$, then
we have that condition. Here, $C=C_\eta$ is written by the $\alpha$-H\"older constant $c_\alpha(\psi^\eta_0)(1)$ for 
$\psi^\eta_0$ at $1$ as
$$
C_\eta=\frac{c_\alpha(\psi^\eta_0)(1)(\pi/2)^{\alpha+1}}{\alpha+1}.
$$
This relation can be alternatively written as
$$
\epsilon=4^{\alpha+1}C_\eta(1-t)^\alpha \leq \frac{1}{21\pi R}.
$$
We may assume that $C_\eta$ are uniformly bounded by some fixed positive constant, say, one. 
Then we can find a constant $t_0$
with $1-\delta_0 \leq t_0 <1$ depending only on $R$, and hence only on $\varphi_0$,
%Thus, for all $t$ sufficiently close to $1$ (at least $1-\delta_0$), we have
such that 
$$
|\mu_{e(\psi^\eta_0 \circ \varphi_0^\eta)}(t)-\mu_{e(\varphi_0^\eta)}(t)| \leq A \cdot \frac{4^{\alpha+1}}{\alpha+1}
(\pi/2)^{\alpha+2}c_\alpha(\psi^\eta_0)(1)(1-t)^\alpha
$$
for every $\eta \in \S1$ and every $t \in [t_0,1)$. 
%Since the estimate for smaller $t$ is negligible if we replace the constant,
Taking the supremum over $\eta \in \S1$ and $t \in [t_0,1)$, we have
$$
\sup_{t \in [t_0,1), \ \eta \in \S1}\left(\frac{2}{1-t^2}\right)^{\alpha}
|\mu_{e(\psi^\eta_0 \circ \varphi_0^\eta)}(t)-\mu_{e(\varphi_0^\eta)}(t)| \leq 
\widetilde A\, \sup_{\eta \in \S1} c_\alpha(\psi^\eta_0)(1)
$$
for some constant $\widetilde A>0$.
%By the first claim in the proof of Theorem \ref{main}, we complete the proof.
This completes the proof of Theorem \ref{main3}.
%\qed

\bigskip
\bigskip

\bigskip

\end{document}